\newtheorem{Thm}{Theorem}[section]
\newtheorem{Prp}[Thm]{Proposition}
\newtheorem{Lem}[Thm]{Lemma}
\newtheorem{Def}[Thm]{Definition}
\newtheorem{Rem}[Thm]{Remark}
\newcommand{\id}{\mathrm{id}}
\newcommand{\dvol}{\mathrm{dvol}}
\newcommand{\Hom}{\mathrm{Hom}}
\newcommand{\setsep}{\;\big|\;}
\newcommand{\dbar}{\overline{\partial}}
\newcommand{\fE}{\mathfrak{E}}
\newcommand{\fF}{\mathfrak{F}}
\newcommand{\fG}{\mathfrak{G}}
\newcommand{\mA}{\mathcal A}
\newcommand{\mB}{\mathcal B}
\newcommand{\mD}{\mathcal D}
\newcommand{\mE}{\mathcal E}
\newcommand{\mF}{\mathcal F}
\newcommand{\mG}{\mathcal G}
\newcommand{\mH}{\mathcal H}
\newcommand{\mJ}{\mathcal J}
\newcommand{\mM}{\mathcal M}
\newcommand{\mP}{\mathcal P}
\newcommand{\mQ}{\mathcal Q}
\newcommand{\mT}{\mathcal T}
\newcommand{\mU}{\mathcal U}
\newcommand{\mW}{\mathcal W}
\newcommand{\bC}{\mathbb C}
\newcommand{\bN}{\mathbb N}
\newcommand{\bR}{\mathbb R}
\newcommand{\bZ}{\mathbb Z}
\newcommand{\fg}{\mathfrak g}
\newcommand{\os}{\overline{s}}
\newcommand{\tvarphi}{\tilde{\varphi}}
\newcommand{\tD}{\tilde{D}}
\newcommand{\tJ}{\tilde{J}}
\newcommand{\tL}{\tilde{L}}
\newcommand{\norm}[1]{\left|\left| #1 \right|\right|}
\newcommand{\scal}[3][]{\ifthenelse{\equal{#1}{}}{
  \left\langle #2,\,#3 \right\rangle
}{\ifthenelse{\equal{#1}{(}}{
  \left( #2,\,#3 \right)
}{\ifthenelse{\equal{#1}{[}}{
  \left[ #2,\,#3 \right]
}{
  #1\left( #2,\,#3 \right)
}}}}
\renewcommand{\title}[1]{\vbox{\center\LARGE{\textsc{#1}}}\vspace{5mm}}
\renewcommand{\author}[1]{\vbox{\center\large{\textsc{#1}}}\vspace{5mm}}
\newcommand{\address}[1]{\vbox{\center\em#1}}
\newcommand{\email}[1]{\vbox{\center\tt#1}\vspace{5mm}}
\begin{document}

\title{Transversality for Holomorphic Supercurves}

\author{Josua Groeger$^1$}

\address{Humboldt-Universit\"at zu Berlin, Institut f\"ur Mathematik,\\
  Rudower Chaussee 25, 12489 Berlin, Germany }

\email{$^1$groegerj@mathematik.hu-berlin.de}


\begin{abstract}
\noindent
We study holomorphic supercurves, which are motivated by supergeometry as a natural generalisation
of holomorphic curves.
We prove that, upon perturbing the defining equations by making them depend on a connection,
the corresponding linearised operator is generically surjective. By this transversality result,
we show that the resulting moduli spaces are oriented finite dimensional smooth manifolds.
Finally, we examine how they depend on the choice of generic data.
\end{abstract}

\noindent
2010 \textit{Mathematics Subject Classification.} 53D35, 58C15.\\
\textit{Key words and phrases.} symplectic manifolds, holomorphic curves, transversality.

\section{Introduction}

Starting with the seminal work of Gromov (\cite{Gro85}), holomorphic curves have become
a powerful tool in the study of symplectic geometry.
In general, solution sets of nonlinear elliptic differential equations often lead
to interesting algebraic invariants. In the case of holomorphic curves, these are known
as Gromov-Witten invariants.
Here, one typically encounters two problems: Firstly, the moduli spaces in question are
wanted to have a nice structure (which is a transversality problem) and, secondly,
they are usually not compact but need to be compactified.
Examples for the occurrence and solution of both issues include the aforementioned
Gromov-Witten invariants (cf. \cite{MS04}), invariants of Hamiltonian group actions
as introduced in \cite{CGMS02} and symplectic field theory (cf. \cite{EGH00} for
a general introduction and \cite{Dra04} for transversality).

Holomorphic supercurves were introduced in \cite{Gro11} as a natural generalisation of holomorphic curves motivated by supergeometry. Indeed, for a careful choice of
geometric data, the defining equation $\dbar_J\Phi=0$ continues to make sense for
morphisms of supermanifolds. It can be translated into differential equations
for an ordinary map and sections of vector bundles, which we shall recall in the
next section.

The aim of the present article is to solve the transversality problem in this context,
while a companion article (\cite{Gro11c}) is concerned with compactness.
We define $(A,J)$-holomorphic supercurves by perturbing the defining equations
by making them depend on a connection, an approach that will prove successful later.
In fact, our definition is such that the new objects under consideration
satisfy a slightly weaker condition than (genuine) holomorphic supercurves
in the non-Kähler case. This change of perspective is ultimately justified
by our transversality and compactness results, which raise hope
to be able to construct new invariants or at least to find new expressions for existing
ones in subsequent work.

This article may be read independent of \cite{Gro11} and \cite{Gro11c} and is organised as follows.
In Sec. \ref{secAJ}, we define $(A,J)$-holomorphic supercurves as advertised above.
In Sec. \ref{secProperties}, we show local properties of single holomorphic supercurves
such as elliptic regularity.
In Sec. \ref{secModuliSpaces}, the main part of this article, we introduce moduli spaces of holomorphic supercurves
and prove that they are, generically, smooth oriented manifolds.
We also specify a sufficient condition for existence.
Finally, we show that the moduli spaces are, for different choices of generic data,
oriented cobordant (usually without being compact, however).

\section{$(A,J)$-Holomorphic Supercurves}
\label{secAJ}

To solve the transversality problem for holomorphic supercurves, it is necessary to
perturb the defining equations by making them depend on another parameter, which we
choose to be a connection. Let us first briefly recall the mathematical background.
Consult \cite{MS04} as well as the references therein for details on holomorphic curves
and \cite{Gro11} for a motivation of holomorphic supercurves in terms of supergeometry.

Let $\Sigma$ be a connected and closed Riemann surface with a fixed complex structure $j$
and $(X,\omega)$ be a compact symplectic manifold of dimension $2n$.
We consider $\omega$-tame (or $\omega$-compatible) almost complex structures $J$ on $X$.
Every such structure $J$ determines a Riemann metric $g_J$.
A $J$-holomorphic curve is a smooth map $\varphi:\Sigma\rightarrow X$
such that $\dbar_J\varphi:=\frac{1}{2}(d\varphi+J\circ d\varphi\circ j)=0$ vanishes.
We denote the moduli space of simple (i.e. not multiply covered) $J$-holomorphic curves
representing the homology class $\beta\in H_2(X,\bZ)$ by
\begin{align}
\label{eqnModuliSpaceSimpleCurves}
\mM^*(\beta,\Sigma;J):=\{\varphi\in C^{\infty}(\Sigma,X)\setsep
\dbar_J\varphi=0\,,\;[\varphi]=\beta\,,\;\textrm{$\varphi$ is simple}\}
\end{align}
If $J$ is generic, $\mM^*(\beta,\Sigma;J)$ is an oriented smooth manifold of dimension
\begin{align}
\label{eqnDimensionClassicalModuliSpace}
\dim\mM^*(\beta,\Sigma;J)=n(2-2g)+2c_1(\beta)
\end{align}
where $g$ denotes the genus of $\Sigma$, and $c_1$ the first Chern class of $(TX,J)$.
This transversality result follows from the infinite dimensional implicit function theorem
after showing that the linearisation $D_{\varphi}$ of $\dbar_J$ at $\varphi$,
considered as a map between appropriate Banach spaces, is a real linear (smooth)
Cauchy-Riemann operator with Fredholm index (\ref{eqnDimensionClassicalModuliSpace}), which
is moreover surjective for generic $J$ as shown by a Sard-Smale argument.
For general smooth maps $\varphi:\Sigma\rightarrow X$, $D_{\varphi}$ can be defined as an operator
$D_{\varphi}:\Omega^0(\Sigma,\varphi^*TX)\rightarrow\Omega^{0,1}(\Sigma,\varphi^*TX)$
by prescribing
\begin{align}
\label{eqnLinearisedDbar}
D_{\varphi}\xi:=\frac{1}{2}(\nabla\xi+J(\varphi)\nabla\xi\circ j)
-\frac{1}{2}J(\varphi)(\nabla_{\xi}J)(\varphi)\partial_J(\varphi)
\end{align}
which coincides with the linearisation of $\dbar_J$ at $\varphi$ if $\varphi$ is holomorphic,
where $\nabla:=\nabla^{g_J}$ denotes the Levi-Civita connection (and its pullback) of $g_J$.
If $J$ is $\omega$-compatible, this operator can be expressed as the sum
\begin{align}
\label{eqnLinearisedDbarCompatible}
D_{\varphi}\xi=(\nabla^J\xi)^{0,1}+\frac{1}{4}N_J(\xi,\partial_J\varphi)
\end{align}
of a complex linear Cauchy-Riemann operator and a zero order complex antilinear operator.
Here, $N_J$ is the Nijenhuis tensor and $\nabla^J$ and $(\nabla^J)^{0,1}$ denote
the $J$-complexification of $\nabla$ and the $(0,1)$-part of $\nabla^J$, respectively.

\begin{Def}[\cite{Gro11}]
\label{defHolomorphicSupercurveAdhoc}
Let $L\rightarrow\Sigma$ be a holomorphic line bundle.
Then a \emph{holomorphic supercurve} is a tuple $(\varphi,\psi_1,\psi_2,\xi)$,
consisting of a smooth map $\varphi:\Sigma\rightarrow X$ and sections
\begin{align*}
\psi_1,\psi_2\in\Gamma(\Sigma,L\otimes_J\varphi^*TX)\;&,\quad\xi\in\Gamma(\Sigma,\varphi^*T^{\bC}X)\\
N_J^{\bC}(\psi_{1\theta},\psi_{2\theta})=0\,,\quad\dbar_J\varphi=0\,&,\quad
D_{\varphi}^{\bC}\xi=0\,,\quad D_{\varphi}^{\bC}\psi_{1\theta}=0\,,\quad D_{\varphi}^{\bC}\psi_{2\theta}=0
\end{align*}
where, for $U\subseteq\Sigma$ sufficiently small, we fix a nonvanishing holomorphic section
$\theta\in\Gamma(U,L)$ and let $\psi_{j\theta}\in\Gamma(U,\varphi^*T^{1,0}X)$ be
such that $\psi_j=\theta\cdot\psi_{j\theta}$ holds for $j=1,2$.
\end{Def}

$D_{\varphi}^{\bC}$ and $N_J^{\bC}$ are the complex-linear extensions of $D_{\varphi}$ and $N_J$, respectively.
Upon identifying $\psi_{j\theta}$ with a (local) section of $\varphi^*TX$,
the last two conditions may be reformulated into
$D_{\varphi}\psi_{j\theta}=D_{\varphi}(J\psi_{j\theta})=0$.
If $J$ is $\omega$-compatible, this is by (\ref{eqnLinearisedDbarCompatible}) equivalent to
\begin{align}
\label{eqnSimplifiedHolomorphicSupercurve}
(\nabla^J\psi_{j\theta})^{0,1}=0\;,\qquad N_J(\psi_{j\theta},\partial_J\varphi)=0
\end{align}
For simplicity, we shall restrict ourselves to the case $\psi_2=0$ and $\xi=0$ in the following.
In the $\omega$-compatible case,
a holomorphic supercurve is then a pair $(\varphi,\psi)$, consisting of a holomorphic curve
$\varphi:\Sigma\rightarrow X$ and a section $\psi\in\Gamma(\Sigma,L\otimes_J\varphi^*TX)$
such that (\ref{eqnSimplifiedHolomorphicSupercurve}) holds with
$\psi_j$ replaced by $\psi$.
By the proof of Prp. \ref{prpUniversalModuliSpace}, it will become clear that for establishing
transversality, we need to perturb (\ref{eqnSimplifiedHolomorphicSupercurve}) by another parameter,
which we choose to be a connection $\nabla^A$ on $X$. We will moreover drop the condition
$N_J(\psi_{\theta},\partial_J\varphi)=0$. We define an operator
$\mD_{\varphi}^{A,J}:\Omega^0(L\otimes_J\varphi^*TX)\rightarrow\Omega^{0,1}(L\otimes_J\varphi^*TX)$
by prescribing
\begin{align}
\label{eqnDOperator}
\mD_{\varphi}^{A,J}\psi
:=\left(\nabla^{A,J}\psi_{\theta}\right)^{0,1}\cdot\theta+\psi_{\theta}\cdot(\dbar\theta)
\end{align}
where, for $U\subseteq\Sigma$ sufficiently small, we fix a nonvanishing section
$\theta\in\Gamma(U,L)$ and let $\psi_{j\theta}\in\Gamma(U,\varphi^*TX)$ be
such that $\psi_j=\theta\cdot\psi_{j\theta}$ holds. Here, $\dbar$ denotes the
usual Dolbeault operator on $L$.
By a straightforward calculation, this is well-defined and makes
$\mD^{A,J}_{\varphi}$ a complex linear Cauchy-Riemann operator.
We thus arrive at the main definition of this article.

\begin{Def}
\label{defAJHolomorphicSupercurve}
An \emph{$(A,J)$-holomorphic supercurve} is a pair $(\varphi,\psi)$, consisting of a smooth map
$\varphi\in C^{\infty}(\Sigma,X)$ and a section $\psi\in\Gamma(\Sigma,L\otimes_J\varphi^*TX)$,
for brevity denoted $(\varphi,\psi):(\Sigma,L)\rightarrow X$, such that
\begin{align*}
\dbar_J\varphi=0\;,\qquad\mD^{A,J}_{\varphi}\psi=0
\end{align*}
holds.
\end{Def}

It is clear that, if $(X,\omega,J)$ is Kähler,
any $(A^{g_J},J)$-holomorphic supercurve $(\varphi,\psi)$ may be identified with a holomorphic supercurve
$(\varphi,\psi,0,0)$ in the sense of Def. \ref{defHolomorphicSupercurveAdhoc},
where $\nabla^{A^{g_J}}=\nabla^{g_J}$ denotes the Levi-Civita connection of $g_J$.
However, this characterisation will not be needed in the following, and we shall simply take
Def. \ref{defAJHolomorphicSupercurve} as our starting point.
We close this section with some simple observations about existence in special cases.

\begin{Rem}
Let $(\varphi,\psi):(\Sigma,L)\rightarrow X$ be a holomorphic supercurve such that $\varphi$
is constant. In this case, the operator $\mD_{\varphi}^{A,J}$ reduces to the usual Dolbeault
operator $\dbar$ on $L\otimes_{J(\varphi(0))} T_{\varphi(0)}X$.
If, in addition, $\deg L=c_1(L)<0$ holds there are no nonzero global holomorphic sections on $L$
(cf. \cite{GH78}) and thus $\psi\equiv 0$ vanishes identically.
\end{Rem}

\begin{Rem}
Let $(X,\omega,J)$ be a Kähler manifold.
Let $\varphi:S^2\rightarrow X$ be a holomorphic sphere and assume that $\xi\in\Gamma(\varphi^*TX)$
satisfies $D_{\varphi}\xi=0$. Geometrically, this means that $\xi$ is tangent to $\varphi$.
If, moreover, the line bundle $L$ allows a global holomorphic section $\theta$,
then $(\varphi,\psi:=\theta\cdot\xi)$ is an $(A^{g_J},J)$-holomorphic supercurve.
If $L$ has no global holomorphic sections, we may still consider holomorphic sections
$\theta$ of $L|_{S^2\setminus\{0\}}$. Of course, $\theta$ has no extension
over $0$, but $\psi:=\theta\cdot\xi$ does, provided that $\xi$ vanishes in $0$ to
a sufficiently large order. In this case, we again obtain an $(A^{g_J},J)$-holomorphic supercurve.
Consult Chp. 6 of \cite{CM07} for such tangency conditions.
\end{Rem}

\begin{Rem}
Let $J$ be an $\omega$-compatible almost complex structure. Fix a spin structure on $\Sigma$
and let $S=S^+\oplus S^-$ denote the complex spinor bundle such that $(S^+)^2\cong T^*\Sigma$.
Let $\varphi\in C^{\infty}(\Sigma,X)$ be a $J$-holomorphic curve and $\zeta\in\Gamma(S^-)$
be a holomorphic (half-)spinor.
By Lem. 3.18 of \cite{Gro11}, the tuple $(\varphi,\psi^{\varphi,\zeta},0,0)$,
with $\psi^{\varphi,\zeta}\in\Gamma(S^+\otimes_J\varphi^*TX)$ as defined there,
is a holomorphic supercurve in the sense of Def. \ref{defHolomorphicSupercurveAdhoc}.
Therefore, $(\varphi,\psi^{\varphi,\zeta})$ is also an $(A^{g_J},J)$-holomorphic supercurve
in the sense of Def. \ref{defAJHolomorphicSupercurve}.
\end{Rem}

\section{Local Holomorphic Supercurves}
\label{secProperties}

In this section, we reveal local properties of single $(A,J)$-holomorphic supercurves
where, for the transversality arguments in Sec. \ref{secModuliSpaces}, we weaken the regularity
assumptions towards Sobolev spaces. For a standard treatment on the latter, consult \cite{Dob06}.
For convenience, we shall briefly recall some basic properties needed later on.
Sobolev spaces embed into spaces of differentiable functions:
Let $k>0$ and $p>n$, and assume that $U\subseteq\bR^n$ is open, bounded and has a Lipschitz
boundary.
Then there is a constant $C>0$, depending on $U$, $k$ and $p$ such that
\begin{align}
\label{eqnMorrey}
\norm{f}_{C^{k-1}(U)}\leq C\cdot\norm{f}_{U,k,p}
\end{align}
The inclusion $W^{k,p}(U)\subseteq C^{k-1}(U)$ is compact.
If $kp>n$ there is, moreover,
a constant $c>0$, depending on $U$, $k$ and $p$ such that
\begin{align}
\label{eqnProductSobolev}
\norm{fg}_{U,j,p}\leq c\norm{f}_{U,j,p}\norm{g}_{U,k,p}
\end{align}
for $j\leq k$. In particular, products of $W^{k,p}$-functions are again of this regularity class.
With the same hypotheses, we obtain that the composition $f\circ g$ of a $W^{k,p}$-map $g$ with a $C^k$-map
$f$ is again of class $W^{k,p}$.
Maps between manifolds and sections of vector bundles are
defined to be of regularity class $W^{k,p}$ if the same is true in local coordinates
(and trivialisations). Note that for this to be well-defined, i.e. independent of the choice of the
coordinates, is is necessary to assume $kp>n$ in order to ensure (\ref{eqnProductSobolev})
and the subsequent remark on compositions, both applied to coordinate changes and transition maps.
In our case of $n=2$, this amounts to requiring $p>2$.
Consult App. B of \cite{Weh04} for an alternative treatment on such Sobolev spaces.

\begin{Def}
\label{defWeakHolomorphicSupercurve}
Let $A\in\mA_{C^{l+1}}(GL(X))$ be a connection and $J\in C^{l+1}(X,\mathrm{End}(TX))$
be an almost complex structure on $X$, both of class $C^{l+1}$ with $l\geq 1$, and let $p>2$.
Then an \emph{$(A,J)$-holomorphic supercurve (of regularity class $W^{1,p}$)} is a pair
$(\varphi,\psi)$, consisting of a map $\varphi\in W^{1,p}(\Sigma,X)$ and a section
$\psi\in W^{1,p}(\Sigma,L\otimes_J\varphi^*TX)$,
such that $\dbar_J\varphi=0$ and $\mD_{\varphi}^{A,J}\psi=0$.
\end{Def}

For the subsequent local analysis, consider the building block
$\left(\nabla^{A,J}\right)^{0,1}$ of the operator $\mD_{\varphi}^{A,J}$.
Let $A\in\mA(GL(X))\subseteq\Omega^1(GL(X),gl(2n))$ be the connection on the principal bundle $GL(X)$ of frames
which corresponds to $\nabla^A$ (cf. \cite{Bau09} and \cite{KN96}).
Let $\varphi\in C^{\infty}(\Sigma,X)$ be a smooth map.
Let $V\subseteq X$ be a sufficiently small open subset such that $TX|_V\cong\bR^{2n}$ is trivial
and set $U:=\varphi^{-1}(V)\subseteq\Sigma$.
Let $\os:V\subseteq X\rightarrow GL(X)|_V$ be a smooth (nonvanishing) local section and
$s:=\os\circ\varphi:U\rightarrow\varphi^*GL(X)|_U$.
Then, identifying a vector field $\xi\in\Gamma(U,\varphi^*TX)$ with a map
$v\in C^{\infty}(U,\bR^{2n})$ via $\xi=[s,v]$, we yield the local formula
\begin{align*}
\nabla^A\xi=\left[s\,,\;dv[\cdot]+(A\circ ds[\cdot])\cdot v\right]
\end{align*}
If, moreover, $J\in\Gamma(\mathrm{End}(TX))$ is an almost complex structure on $X$, a short
calculation yields
\begin{align}
\label{eqnLocalComplexConnection}
\left(\nabla^{A,J}\xi\right)^{0,1}
=\scal[[]{s}{\dbar_Jv+\frac{1}{4}\left((C\circ d\varphi)+(J\circ\varphi)\cdot(C\circ d\varphi)\circ j\right)v}
\end{align}
where, on the right, $J$ is identified with a map $J\in C^{\infty}(V,GL(2n,\bR))$ and
\begin{align*}
C:=(A\circ d\os)-J\cdot (A\circ d\os)\cdot J-J\cdot dJ\in\Omega^1(V,\fg)
\end{align*}
Note that (\ref{eqnLocalComplexConnection}) continues to
hold accordingly for $A$ and $J$ of regularity class $C^{l+1}$ instead of $C^{\infty}$.
In the following, we denote by $z=s+it$ the standard coordinates on $\bC\cong\bR^2$.

\begin{Def}
\label{defLocalHolomorphicSupercurve}
Let $U\subseteq\bC$ be an open set and $p>2$. Then a pair of functions $(\varphi,\psi)\in W^{1,p}(U,\bR^{2n})$
is called a \emph{local holomorphic supercurve} if there are functions
$J\in C^{l+1}(V,\bR^{2n\times 2n})$ and $D\in C^l(V\times\bR^{2n},\bR^{2n\times 2n})$,
where $V\subseteq\bR^{2n}$ is an open set with $\varphi(U)\subseteq V$, $J^2=-\id$ holds
and $D$ is linear in the second component, such that,
abbreviating $\tilde{J}:=J\circ\varphi$ and $\tilde{D}:=(D(\varphi)\cdot\partial_s\varphi)$,
\begin{align}
\label{eqnLocalHolomorphicCurve}
\partial_s\varphi(z)+\tilde{J}(z)\cdot\partial_t\varphi(z)&=0\\
\label{eqnLocalHolomorphicSupercurve}
\partial_s\psi(z)+\tilde{J}(z)\cdot\partial_t\psi(z)+\tilde{D}(z)\cdot\psi(z)&=0
\end{align}
holds for every $z\in U$.
\end{Def}

It is clear that $\tilde{J}\in W^{1,p}(U,\bR^{2n\times 2n})$
and $\tilde{D}\in L^p(U,\bR^{2n\times 2n})$ hold.
$\tJ$ and $\tD$ are of higher regularity if $\varphi$ is.
The following result explains the terminology.

\begin{Lem}
\label{lemLocalHolomorphicSupercurve}
Let $(A,J)$ be of regularity $C^{l+1}$ and $(\varphi,\psi):(\Sigma,L)\rightarrow X$ be an
$(A,J)$-holomorphic supercurve of class $W^{1,p}$ with $p>2$. Let $V\subseteq X$ be a
coordinate chart of $X$ such that $L$ is trivial on $U:=\varphi^{-1}(V)$, which we then identify
with a subset of $\bC$ via holomorphic coordinates $z=s+it$. Then, upon restricting to $U$,
we may identify $(\varphi,\psi)$ with a local holomorphic supercurve.
\end{Lem}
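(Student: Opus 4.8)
The plan is to unwind both defining equations in local coordinates and check that they take the form required by Definition \ref{defLocalHolomorphicSupercurve}. First I would fix the data: choose $V\subseteq X$ a coordinate chart trivialising both $TX$ and $L$ on $U:=\varphi^{-1}(V)$, pick holomorphic coordinates $z=s+it$ on $U$, a nonvanishing holomorphic section $\theta\in\Gamma(U,L)$, and a smooth local frame $\os:V\to GL(X)|_V$ with $s:=\os\circ\varphi$. Writing $\psi=\theta\cdot\psi_\theta$ and identifying $\psi_\theta=[s,v]$ with a function $v\in W^{1,p}(U,\bR^{2n})$, and similarly $\varphi$ with its coordinate representation, I obtain the concrete object $(\varphi,v)$ that will be the candidate local holomorphic supercurve (after relabelling $v$ as $\psi$ to match the notation of Definition \ref{defLocalHolomorphicSupercurve}).

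Next I would treat the two equations separately. The equation $\dbar_J\varphi=0$ is classical: in local coordinates it becomes precisely $\partial_s\varphi+\tilde J\,\partial_t\varphi=0$ with $\tilde J=J\circ\varphi$, which is (\ref{eqnLocalHolomorphicCurve}). For the section equation, since $\theta$ is holomorphic we have $\dbar\theta=0$, so by the defining formula (\ref{eqnDOperator}) the condition $\mD_\varphi^{A,J}\psi=0$ reduces to $(\nabla^{A,J}\psi_\theta)^{0,1}=0$. Now I invoke the local formula (\ref{eqnLocalComplexConnection}): with $\psi_\theta=[s,v]$ this reads
\begin{align*}
\left(\nabla^{A,J}\psi_\theta\right)^{0,1}
=\Bigl[s\,,\;\dbar_Jv+\tfrac{1}{4}\bigl((C\circ d\varphi)+(J\circ\varphi)\cdot(C\circ d\varphi)\circ j\bigr)v\Bigr].
\end{align*}
Setting this to zero and using $\dbar_Jv=\partial_sv+\tilde J\,\partial_tv$, together with the fact that on $J$-holomorphic $\varphi$ one has $(C\circ d\varphi)\circ j=-\tilde J\cdot(C\circ d\varphi)$ (which follows from (\ref{eqnLocalHolomorphicCurve}) applied inside the one-form $C\circ d\varphi$), the bracketed term collapses to a first-order-in-$z$ coefficient times $v$. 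Concretely, this produces $\partial_s v+\tilde J\,\partial_t v+\tilde D\,v=0$ with $\tilde D=D(\varphi)\cdot\partial_s\varphi$ for an explicit $D\in C^l(V\times\bR^{2n},\bR^{2n\times 2n})$ built algebraically from $C$ and $J$, linear in its second argument; the regularity $C^l$ is inherited from $A,J\in C^{l+1}$ since $C$ involves one derivative of $\os$ and $J$.

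The remaining point is bookkeeping: one must check that $D$ as constructed depends only on the point of $V$ (and linearly on the fibre variable), not on any derivative of $\varphi$ beyond the overall factor $\partial_s\varphi$, and that the various local choices ($\theta$, $\os$, the trivialisation) do not affect membership in the class described by Definition \ref{defLocalHolomorphicSupercurve}. The regularity claims $\tilde J\in W^{1,p}$ and $\tilde D\in L^p$ then follow from $\varphi\in W^{1,p}$, $J\in C^{l+1}$, $D\in C^l$ and the composition and product properties (\ref{eqnProductSobolev}) recalled above (valid since $p>2=n$ here). I expect the main obstacle to be purely computational rather than conceptual: carefully manipulating the one-form $C\circ d\varphi$ using the holomorphic-curve equation so that the $(0,1)$-projection in (\ref{eqnLocalComplexConnection}) genuinely reduces to a zeroth-order term of the precise shape $\tilde D\cdot v$ with $\tilde D$ proportional to $\partial_s\varphi$ — in other words, verifying that no stray $\partial_t\varphi$-dependence survives and that the antilinear-looking pieces are absorbed correctly. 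Once that identity is in hand, the statement follows.
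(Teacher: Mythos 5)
Your overall route is the same as the paper's: trivialise $TX$ and $L$, use a holomorphic $\theta$ so that $\dbar\theta=0$ reduces $\mD^{A,J}_{\varphi}\psi=0$ to $(\nabla^{A,J}\psi_{\theta})^{0,1}=0$, invoke the local formula (\ref{eqnLocalComplexConnection}), and use the holomorphic-curve equation to bring the zero-order coefficient into the form $\tD=D(\varphi)\cdot\partial_s\varphi$. However, the identity you rely on for that last step is false: $(C\circ d\varphi)\circ j=-\tJ\cdot(C\circ d\varphi)$ does not follow from (\ref{eqnLocalHolomorphicCurve}). What the curve equation gives is $d\varphi\circ j=\tJ\circ d\varphi$, hence $\bigl((C\circ d\varphi)\circ j\bigr)(w)=C\bigl(\tJ\,d\varphi(w)\bigr)$; the rotation lands in the \emph{argument} of the matrix-valued one-form $C$, not as a matrix factor $-\tJ$ in front, and $C$ (built from $A\circ d\os$ and $J\,dJ$) has no reason to anticommute with $J$ in that sense. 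With your identity the bracket in (\ref{eqnLocalComplexConnection}) would collapse to $\tfrac12(C\circ d\varphi)$ and you would end up asserting (\ref{eqnLocalHolomorphicSupercurve}) with $\tD=C(\partial_s\varphi)$ — an equation which $\psi$ does not in fact satisfy, so as written the key computational step fails.

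The repair is exactly the manipulation you flagged as delicate, and it is what the paper does: evaluate the $(0,1)$-form equation on $\partial_s$ (evaluating on $\partial_t$ gives nothing new, since for a $(0,1)$-form the value on $\partial_t$ is $\tJ$ times the value on $\partial_s$). This yields $\partial_s v+\tJ\,\partial_t v+\tfrac12\bigl(C(\partial_s\varphi)+\tJ\,C(\partial_t\varphi)\bigr)v=0$, and substituting $\partial_t\varphi=\tJ\,\partial_s\varphi$ from (\ref{eqnLocalHolomorphicCurve}) gives $\tD=D(\varphi)\cdot\partial_s\varphi$ with $D=\tfrac12\left(C+J\cdot C\circ J\right)$, i.e. $D(x,w)=\tfrac12\bigl(C_x(w)+J(x)\,C_x(J(x)w)\bigr)$. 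This $D$ is linear in $w$ and of class $C^l$ since $C$ is, so the regularity and bookkeeping parts of your argument go through unchanged; with this correction your proof coincides with the paper's.
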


\begin{proof}
The statement for $\varphi$ is clear. Now choose a holomorphic
section $\theta\in\Gamma(U,L)$ to (locally) identify $L\otimes_{\bC}\varphi^*T^{1,0}X$
with $\varphi^*TX$. Moreover, note that any section of $GL(X)$ determines a (local) trivialisation of $TX$,
which applies in particular to the local frame determined by a (smooth) coordinate chart of $X$.
Therefore, (\ref{eqnLocalComplexConnection}) yields $C$ (of regularity class $C^l$)
such that the defining equation $\mD_{\varphi}^{A,J}\psi=0$ for $\psi$ is, upon
applying to $\partial_s$, equivalent to (\ref{eqnLocalHolomorphicSupercurve})
with $D:=\frac{1}{2}(C+J\cdot C \circ J)$.
\end{proof}

By (\ref{eqnLocalHolomorphicSupercurve}), holomorphic supercurves satisfy a perturbed $\dbar_J$-equation.
As a first consequence, we may apply the Carleman similarity principle (cf. \cite{FHS95}), to be stated next.
Here, the $\varphi$-dependence of $\tJ$ and $\tD$ is not important.

\begin{Lem}[Carleman similarity principle]
\label{lemCarlemanSimilarity}
Let $0\in U\subseteq\bC$ be an open set, $p>2$,
$\tD\in L^p(U,\bR^{2n\times 2n})$ and $\tJ\in W^{1,p}(U,\bR^{2n\times 2n})$ be such that $\tJ^2=-\id$.
Let $\psi\in W^{1,p}(U,\bR^{2n})$ be a solution of (\ref{eqnLocalHolomorphicSupercurve}) such that $\psi(0)=0$.
Then there is a smaller open set $0\in U'\subseteq U$, a map $\Phi\in W^{1,p}(U',\Hom_{\bR}(\bC^n,\bR^{2n}))$
and a holomorphic map $\sigma:U'\rightarrow\bC^n$ such that $\Phi(z)$ is invertible and
\begin{align*}
\psi(z)=\Phi(z)\sigma(z)\;,\qquad\sigma(0)=0\;,\qquad\Phi(z)^{-1}\tJ(z)\Phi(z)=i
\end{align*}
holds for every $z\in U'$.
\end{Lem}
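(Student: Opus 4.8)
The plan is to follow the classical argument for the Carleman similarity principle (as in \cite{FHS95} or \cite{HZ94}), adapting it to the present Sobolev setting where $\tJ\in W^{1,p}$ and $\tD\in L^p$. The first step is a normalisation: since $\tJ(0)^2=-\id$, there is a fixed invertible matrix $\Psi_0\in\Hom_{\bR}(\bC^n,\bR^{2n})$ with $\Psi_0^{-1}\tJ(0)\Psi_0=i$ (the standard complex structure on $\bC^n\cong\bR^{2n}$), and by a continuity/averaging construction one extends this, on a possibly smaller neighbourhood $0\in U'\subseteq U$, to a map $\Psi\in W^{1,p}(U',\Hom_{\bR}(\bC^n,\bR^{2n}))$ with $\Psi(z)$ invertible and $\Psi(z)^{-1}\tJ(z)\Psi(z)=i$ for all $z\in U'$; here $p>2$ is used so that $W^{1,p}(U')\hookrightarrow C^0(U')$ and invertibility is an open condition. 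Writing $\psi=\Psi\rho$ with $\rho:=\Psi^{-1}\psi\in W^{1,p}(U',\bC^n)$, one substitutes into \eqref{eqnLocalHolomorphicSupercurve} and uses the conjugation identity to obtain a perturbed Cauchy--Riemann equation for $\rho$ of the form
\begin{align*}
\partial_s\rho+i\,\partial_t\rho+\hat{D}\rho=0
\end{align*}
for some $\hat{D}\in L^p(U',\bC^{n\times n}_{\bR})$ built from $\tD$ and the first derivatives of $\Psi$ (this is where $\tJ\in W^{1,p}$ enters, guaranteeing $\partial_s\Psi,\partial_t\Psi\in L^p$, hence $\hat{D}\in L^p$ by H\"older after using the $C^0$-bound on $\Psi^{-1}$).

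The second step is the key algebraic trick for zero-order terms in a $\dbar$-equation: write $\hat{D}$ as a combination of a $\bC$-linear and a $\bC$-antilinear part and absorb the antilinear part using the pointwise identity $\overline{\rho}=\Lambda(\rho)\rho$ for a suitable $\Lambda$, valid wherever $\rho\neq 0$ (one sets $\Lambda$ on the zero set in any measurable bounded way — e.g. $\Lambda=0$ — and checks that the resulting coefficient is still in $L^p$). This turns the equation into $\partial_s\rho+i\,\partial_t\rho+ C\rho=0$ with $C\in L^p(U',\bC^{n\times n})$ \emph{complex} linear, i.e. a $\dbar$-operator with $L^p$ zero-order coefficient. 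Applying the standard solution of the $\dbar$-problem, one finds $\Theta\in W^{1,p}(U'',GL(n,\bC))$ (on a still smaller $U''$) solving $\partial_s\Theta+i\,\partial_t\Theta = -\Theta C$ with $\Theta(0)=\id$; then $\sigma:=\Theta\rho$ satisfies $\partial_s\sigma+i\,\partial_t\sigma=0$, so $\sigma$ is holomorphic $U''\to\bC^n$, and $\sigma(0)=\Theta(0)\Psi(0)^{-1}\psi(0)=0$. Setting $\Phi:=\Psi\,\Theta^{-1}\in W^{1,p}(U'',\Hom_{\bR}(\bC^n,\bR^{2n}))$ gives $\psi=\Phi\sigma$ with $\Phi(z)$ invertible and, since $\Theta^{-1}$ is $\bC$-linear (it commutes with $i$), $\Phi(z)^{-1}\tJ(z)\Phi(z)=\Theta(z)\Psi(z)^{-1}\tJ(z)\Psi(z)\Theta(z)^{-1}=\Theta(z)\,i\,\Theta(z)^{-1}=i$, as required (after relabelling $U''$ as $U'$).

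The main obstacle I expect is not the algebra but the regularity bookkeeping: one must carefully verify at each stage that the manufactured matrix-valued functions lie in the claimed spaces. Specifically, constructing $\Psi$ with $\Psi^{-1}\tJ\Psi=i$ in $W^{1,p}$ rather than smoothly requires care (differentiate the defining relation and use that $\tJ\in W^{1,p}$, $p>2$, so point evaluation and products $W^{1,p}\cdot L^p\subseteq L^p$ behave well via \eqref{eqnProductSobolev}); and the passage from the $\bC$-antilinear term to an $L^p$ complex-linear coefficient via $\Lambda$ must be checked to preserve $L^p$-integrability despite $\Lambda$ being merely bounded and discontinuous on $\{\rho=0\}$. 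The final invocation of solvability of the $\dbar$-equation with $L^p$ zero-order term — yielding $\Theta\in W^{1,p}\cap GL$ near $0$ — is standard (it rests on the compactness of the Cauchy integral operator $L^p\to W^{1,p}$ and a fixed-point argument), so I would cite it rather than reprove it. Shrinking the domain finitely many times is harmless since the statement only claims existence of some neighbourhood $U'$.
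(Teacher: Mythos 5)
The paper does not prove this lemma at all: it is quoted verbatim from Floer--Hofer--Salamon \cite{FHS95}, and your argument is exactly the standard proof from that source (pointwise conjugation of $\tJ$ to $i$, absorption of the antilinear zero-order part via $\overline{\rho}=\Lambda\rho$, and a $\dbar$-fixed-point construction of $\Theta$), carried out correctly in the $W^{1,p}$/$L^p$ setting. The only blemish is a sign: for $\sigma=\Theta\rho$ to be holomorphic when $\partial_s\rho+i\partial_t\rho+C\rho=0$ you need $\partial_s\Theta+i\partial_t\Theta=\Theta C$ (not $-\Theta C$), which changes nothing in the existence argument for $\Theta$.
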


\begin{Lem}
\label{lemCROperatorZeroes}
Let $(A,J)$ be of regularity $C^{l+1}$ and $(\varphi,\psi):(\Sigma,L)\rightarrow X$
be an $(A,J)$-holomorphic supercurve of class $W^{1,p}$. Then either
$\psi\equiv 0$ or the set of zeros $\{z\in\Sigma\setsep\psi(z)=0\}$ is finite.
\end{Lem}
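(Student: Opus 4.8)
The plan is to reduce the global statement to the local one and then invoke the Carleman similarity principle (Lemma \ref{lemCarlemanSimilarity}) together with the standard unique continuation argument for holomorphic maps. First I would observe that the zero set $Z:=\{z\in\Sigma\setsep\psi(z)=0\}$ is closed, since $\psi$ is continuous by the Sobolev embedding (\ref{eqnMorrey}) applied with $k=1$, $p>2$. As $\Sigma$ is connected and compact, it suffices to show that $Z$ is either all of $\Sigma$ or a discrete (hence finite) set; equivalently, that every point of $Z$ is either an interior point of $Z$ or an isolated point of $Z$.

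So fix $z_0\in Z$. By Lemma \ref{lemLocalHolomorphicSupercurve}, after choosing a coordinate chart $V\subseteq X$ around $\varphi(z_0)$ on which $L$ is trivial and holomorphic coordinates $z=s+it$ on $U:=\varphi^{-1}(V)$ centred at $z_0$, the pair $(\varphi,\psi)$ restricts to a local holomorphic supercurve, so that $\psi$ solves (\ref{eqnLocalHolomorphicSupercurve}) with $\tJ\in W^{1,p}$, $\tJ^2=-\id$, and $\tD\in L^p$. Since $\psi(z_0)=0$, Lemma \ref{lemCarlemanSimilarity} provides a possibly smaller neighbourhood $U'$ of $z_0$, an invertible-valued map $\Phi\in W^{1,p}(U',\Hom_{\bR}(\bC^n,\bR^{2n}))$, and a holomorphic map $\sigma:U'\rightarrow\bC^n$ with $\psi=\Phi\sigma$ and $\sigma(z_0)=0$. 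Because $\Phi(z)$ is invertible for all $z\in U'$, the zeros of $\psi$ on $U'$ coincide exactly with the zeros of $\sigma$.

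Now $\sigma$ is a holomorphic map from an open subset of $\bC$ into $\bC^n$, so each component $\sigma^k$ is a holomorphic function; by the identity theorem, either $\sigma^k\equiv 0$ on $U'$ or its zeros are isolated. Hence either all components vanish identically — in which case $\psi\equiv 0$ on $U'$ and $z_0$ is an interior point of $Z$ — or at least one component $\sigma^k$ is not identically zero, in which case $\sigma^{-1}(0)\subseteq (\sigma^k)^{-1}(0)$ is discrete, so $z_0$ is an isolated zero of $\psi$. This dichotomy is precisely what we needed: the set $Z_{\mathrm{int}}$ of points admitting a neighbourhood on which $\psi\equiv 0$ is open by construction, and it is also closed, since if $z_0\in\overline{Z_{\mathrm{int}}}$ then $z_0\in Z$, and the local analysis above at $z_0$ cannot produce an isolated zero (there are points of $Z_{\mathrm{int}}$ arbitrarily close to $z_0$), so $\psi\equiv 0$ near $z_0$ and $z_0\in Z_{\mathrm{int}}$. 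By connectedness of $\Sigma$, either $Z_{\mathrm{int}}=\Sigma$, giving $\psi\equiv 0$, or $Z_{\mathrm{int}}=\emptyset$, in which case every point of $Z$ is isolated, and compactness of $\Sigma$ forces $Z$ to be finite.

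The one point requiring a little care — and the main (mild) obstacle — is verifying that the hypotheses of Lemma \ref{lemCarlemanSimilarity} are genuinely met after the reduction: namely that $\tD$ lies in $L^p$ and $\tJ$ in $W^{1,p}$ with $\tJ^2=-\id$. This is exactly the content of the remark following Definition \ref{defLocalHolomorphicSupercurve} combined with Lemma \ref{lemLocalHolomorphicSupercurve}, since $\varphi\in W^{1,p}$, $J$ and $A$ are of class $C^{l+1}$ with $l\geq 1$, and the product and composition estimates (\ref{eqnProductSobolev}) hold for $p>2=n$; so the regularity bookkeeping is routine, and no genuine analytic difficulty arises beyond what is already packaged in the cited lemmas.
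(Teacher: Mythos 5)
Your proposal is correct and follows essentially the same route as the paper: reduce to a local holomorphic supercurve via Lem. \ref{lemLocalHolomorphicSupercurve}, apply the Carleman similarity principle of Lem. \ref{lemCarlemanSimilarity}, and conclude with the identity theorem for holomorphic maps together with compactness and connectedness of $\Sigma$. The only difference is cosmetic: you organise the global step as an open--closed argument for the set of interior zeros (invoking the similarity principle only at zeros of $\psi$), whereas the paper covers $\Sigma$ by finitely many similarity neighbourhoods and propagates vanishing across overlaps; both versions of this bookkeeping are routine.
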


\begin{proof}
By Lem. \ref{lemLocalHolomorphicSupercurve} and Lem. \ref{lemCarlemanSimilarity}, we find around each
$z_0\in\Sigma$ an open set $U'(z_0)$ such that $(\varphi,\psi)$ is a local holomorphic supercurve
and, moreover, $\psi$ is similar to a holomorphic map. Since $\Sigma$ is compact, it is covered by finitely
many such sets $U'_i$.
Now assume that for some $U'_i$, $\psi|_{U'_i}\equiv 0$ holds. Consider any other $U'_j$,
where $\psi$ is similar to the holomorphic map $\sigma_j$, with nontrivial overlap $U'_i\cap U'_j\neq\emptyset$.
Then $\sigma_j$ vanishes identically on $U'_i\cap U'_j$ and thus, by the identity theorem for holomorphic
maps, on $U'_j$. The same then applies to $\psi$ and, as a consequence, we obtain $\psi\equiv 0$ on $\Sigma$.
If, on the other hand, $\psi|_{U'_i}$ does not vanish identically for every $U'_i$, the zeros of the corresponding
holomorphic maps $\sigma_i$ are isolated and thus finite, and the same then applies to $\psi$.
\end{proof}

We close this section with elliptic bootstrapping and consequential propositions about
regularity and compactness of holomorphic supercurves.
The next lemma is borrowed from App. B in \cite{MS04}. It is proved by means of an inequality due to
Calderon and Zygmund (\cite{CZ52}, \cite{CZ56}).

\begin{Lem}[Elliptic Bootstrapping]
\label{lemEllipticBootstrapping}
Let $U'\subseteq U\subseteq\bC$ be open sets such that $\overline{U'}\subseteq U$, $l$ be a
positive integer and $p>2$. Then, for every constant $c_0>0$, there is a constant $c>0$ such that
the following holds.
Assume $J\in W^{l,p}(U,\bR^{2n\times 2n})$ satisfies $J^2=-\id$ and $\norm{J}_{U,l,p}\leq c_0$.
Let $k\in\{0,\ldots,l\}$ and $\eta\in W^{k,p}_{\mathrm{loc}}(U,\bR^{2n})$.
Then, every $u\in L^p_{\mathrm{loc}}(U,\bR^{2n})$ that satisfies
\begin{align*}
\partial_su+J\partial_tu=\eta
\end{align*}
is of regularity class $W^{k+1,p}_{\mathrm{loc}}(U,\bR^{2n})$,
and the following estimate holds.
\begin{align*}
\norm{u}_{U',k+1,p}\leq c\left(\norm{\partial_su+J\partial_tu}_{U,k,p}+\norm{u}_{U,k,p}\right)
\end{align*}
\end{Lem}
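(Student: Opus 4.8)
The plan is to reduce the statement to the scalar (more precisely, $\mathbb{C}^n$-valued) Calderon--Zygmund estimate and then bootstrap on the order of regularity. The key observation is that the operator $\partial_s + J\partial_t$ is, pointwise, a perturbation of the standard Cauchy--Riemann operator $\partial_s + i\partial_t = 2\dbar$: since $J^2 = -\id$, one can (using the hypothesis $\norm{J}_{U,l,p}\le c_0$ and the Sobolev embedding (\ref{eqnMorrey}), which gives $J\in C^0$ with controlled norm since $lp>2$) write, after possibly shrinking $U$ and choosing a suitable smooth pointwise frame, $J$ as conjugate to the constant structure $J_0 = i\cdot\id_{\mathbb{C}^n}$ by a map of class $W^{l,p}\cap C^0$. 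Alternatively, and more directly, one absorbs the difference into the inhomogeneity: writing $\partial_s u + i\partial_t u = \eta - (J - i)\partial_t u$, the term $(J-i)\partial_t u$ is lower order in $u$ and one sets up a bootstrap. Either way, the estimate for the constant operator is exactly the Calderon--Zygmund inequality quoted from App. B of \cite{MS04}.

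The induction is on $k$. For the base case $k=0$: given $u\in L^p_{\mathrm{loc}}$ with $\partial_s u + J\partial_t u = \eta \in L^p_{\mathrm{loc}}$, one wants $u\in W^{1,p}_{\mathrm{loc}}$. Here one uses a mollification/difference-quotient argument together with the a priori estimate: approximate $u$ by smooth $u_\varepsilon$, apply the constant-coefficient $W^{1,p}$ estimate to $u_\varepsilon$ on nested open sets, use that $\norm{(J-i)\partial_t u_\varepsilon}_{U,0,p}$ is controlled by $\norm{J - i}_{C^0}\norm{\partial_t u_\varepsilon}_{U,0,p}$ with a small constant (after shrinking $U$ so that $\norm{J-i}_{C^0}$ can be absorbed — this is where $J^2=-\id$ and continuity of $J$ enter crucially, giving that $J$ is close to \emph{some} constant complex structure, which one then conjugates to $i$), and pass to the limit. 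For the inductive step $k\to k+1$ (with $k+1\le l$): assuming $u\in W^{k,p}_{\mathrm{loc}}$ is already known, one differentiates the equation. Formally, $\partial_s(\partial_\alpha u) + J\partial_t(\partial_\alpha u) = \partial_\alpha\eta - (\partial_\alpha J)\partial_t u$, and the right-hand side lies in $W^{k-1,p}_{\mathrm{loc}}$: indeed $\partial_\alpha \eta\in W^{k-1,p}_{\mathrm{loc}}$ by hypothesis on $\eta$, while $(\partial_\alpha J)\partial_t u$ is a product of $\partial_\alpha J\in W^{l-1,p}\supseteq W^{k-1,p}$ with $\partial_t u\in W^{k-1,p}_{\mathrm{loc}}$, and such products stay in $W^{k-1,p}_{\mathrm{loc}}$ by (\ref{eqnProductSobolev}) since $(k-1)p\le lp$ and $p>2$. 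Applying the inductive hypothesis (at level $k-1$) to $\partial_\alpha u$ gives $\partial_\alpha u\in W^{k,p}_{\mathrm{loc}}$ for each $\alpha$, hence $u\in W^{k+1,p}_{\mathrm{loc}}$, and chaining the estimates on nested domains $U'\subseteq U''\subseteq U$ yields the quantitative bound with a constant $c$ depending only on $c_0$, $U'$, $U$, $l$, $p$.

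The main obstacle is making the first step rigorous: regularity must be \emph{gained}, not merely estimated, so one cannot simply differentiate the equation for $u$ that is only known to be $L^p$. The standard device is to work with difference quotients $u^h(z) := (u(z+h)-u(z))/|h|$, which satisfy a difference-quotient version of the equation with a right-hand side that is uniformly bounded in the appropriate norm; the constant-coefficient a priori estimate then bounds $\norm{u^h}$ uniformly in $h$, and weak compactness upgrades this to an actual derivative. The coefficient $J$ being only $W^{l,p}$ (not smooth) requires care here: one uses that $J\in C^{l-1}$ by (\ref{eqnMorrey}), so $J$ is at least Lipschitz (as $l\ge1$, $p>2$ give $C^0$, and for the difference-quotient estimate one needs $J\in W^{1,p}$, hence $J(z+h)-J(z)$ is controlled in $L^p$ after reindexing). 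This is precisely the content of the cited result from \cite{MS04}, so in the paper the proof can reasonably be presented as an application of that lemma combined with the local reduction in Lem. \ref{lemLocalHolomorphicSupercurve}; the induction on $k$ using (\ref{eqnProductSobolev}) to handle the variable coefficients is then routine.
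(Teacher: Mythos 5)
The paper itself gives no proof of this lemma: it is quoted from App.~B of \cite{MS04} and attributed to the Calderon--Zygmund inequality \cite{CZ52}, \cite{CZ56}, exactly as you surmise in your closing paragraph. Your sketch is in effect a reconstruction of that cited argument --- freeze the coefficient near a point, use the $C^0$-control on $J$ coming from (\ref{eqnMorrey}) to make $\norm{J-J(z_0)}_{L^\infty}$ small on small balls covering $\overline{U'}$ (one cannot literally ``shrink $U$''; one covers and patches with cutoffs, which is compatible with your chaining over nested domains), gain the first derivative by difference quotients/mollification against the constant-coefficient Calderon--Zygmund estimate, and then induct on $k$ by differentiating the equation --- so the route is the intended one. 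Two small slips in the base case: for $l=1$ the embedding (\ref{eqnMorrey}) gives only H\"older continuity $C^{0,1-2/p}$, not Lipschitz (what the difference-quotient step actually uses is, as you then correctly say, the $L^p$-bound on difference quotients of $J\in W^{1,p}$), and the inclusion should read $W^{l-1,p}\subseteq W^{k-1,p}$, not $\supseteq$.

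The genuine gap is in your inductive step at the endpoint $k=l$ with $l=1$. You justify $(\partial_\alpha J)\partial_t u\in W^{k-1,p}_{\mathrm{loc}}$ by (\ref{eqnProductSobolev}), but that estimate requires one factor to lie in $W^{m,p}$ with $mp>2$, i.e.\ $m\geq 1$; the condition you state, ``$(k-1)p\leq lp$ and $p>2$'', is not its hypothesis. For $l\geq 2$ (any $k\leq l$) and for $k<l$ the product is indeed controlled, but for $k=l=1$ both $\partial_\alpha J$ and $\partial_t u$ are merely $L^p$, so their product is only in $L^{p/2}$ and the step fails as written. This is not a discardable corner case: it is precisely the case invoked in the first bootstrap step of Prp.~\ref{prpEllipticRegularity}, where the coefficient $\tJ$ is only $W^{1,p}$ and one concludes $\varphi\in W^{2,p}_{\mathrm{loc}}$. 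The standard repair is an integrability bootstrap: apply the Calderon--Zygmund estimate with exponent $p/2$ to get $u\in W^{2,p/2}_{\mathrm{loc}}$, use the two-dimensional Sobolev embedding to improve the integrability of $\partial u$ by a fixed amount at each step, and after finitely many iterations $\partial u$ is bounded (Morrey, once the exponent exceeds $2$), whence $(\partial_\alpha J)\partial_t u\in L^p_{\mathrm{loc}}$ and one final application of the estimate yields $u\in W^{2,p}_{\mathrm{loc}}$ together with the asserted bound. With that extra twist (or an equivalent device, as in the proof given in \cite{MS04}) your induction closes for all $k\in\{0,\ldots,l\}$.
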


\begin{Prp}[Regularity]
\label{prpEllipticRegularity}
Let $(A,J)$ be a connection and an almost complex structure on $X$, both of regularity class $C^{l+1}$,
and let $(\varphi,\psi):(\Sigma,L)\rightarrow X$ be an $(A,J)$-holomorphic supercurve of regularity class $W^{1,p}$
such that $p>2$. Then $(\varphi,\psi)$ are of class $W^{l+1,p}$. In particular, $(\varphi,\psi)$ are of class
$C^l$ and, if $(A,J)$ are smooth, then so are $(\varphi,\psi)$.
\end{Prp}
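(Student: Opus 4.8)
The strategy is a standard elliptic bootstrapping argument, applied successively to $\varphi$ and then to $\psi$, using the local descriptions from Lem.~\ref{lemLocalHolomorphicSupercurve} together with the bootstrapping estimate of Lem.~\ref{lemEllipticBootstrapping}. Since regularity is a local statement and $\Sigma$ is compact, it suffices to work on coordinate patches $U\subseteq\bC$ as in Lem.~\ref{lemLocalHolomorphicSupercurve}, where $(\varphi,\psi)$ becomes a local holomorphic supercurve solving \eqref{eqnLocalHolomorphicCurve} and \eqref{eqnLocalHolomorphicSupercurve}.

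First I would handle $\varphi$. We know a priori $\varphi\in W^{1,p}$, hence $\tJ=J\circ\varphi\in W^{1,p}$ by the composition property (a $C^{l+1}$ map, $l\ge1$, precomposed with a $W^{1,p}$ map), and by Morrey $\varphi\in C^0$. Rewriting \eqref{eqnLocalHolomorphicCurve} as $\partial_s\varphi+\tJ\,\partial_t\varphi=0$, i.e. $\partial_s\varphi+i\,\partial_t\varphi=-(\tJ-i)\partial_t\varphi=:\eta$, we see that the right-hand side lies in $L^p$; but to bootstrap we instead keep the equation in the form $\partial_s\varphi+\tJ\partial_t\varphi=0$ and apply Lem.~\ref{lemEllipticBootstrapping} with the $W^{l,p}$-coefficient $J$ there taken to be $\tJ$ and with $\eta=0$. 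The inductive step: if $\varphi\in W^{k,p}_{\mathrm{loc}}$ for some $1\le k\le l$, then $\tJ=J\circ\varphi\in W^{k,p}_{\mathrm{loc}}$ (composition of a $C^{l+1}\supseteq C^k$ map with a $W^{k,p}$ map, valid since $kp>p>2=n$), so $\tJ^2=-\id$ with a local $W^{k,p}$-bound, and Lem.~\ref{lemEllipticBootstrapping} applied with this $k$ upgrades $\varphi$ to $W^{k+1,p}_{\mathrm{loc}}$. Iterating from $k=1$ up to $k=l$ gives $\varphi\in W^{l+1,p}_{\mathrm{loc}}$, hence globally $\varphi\in W^{l+1,p}(\Sigma,X)$ by compactness and a finite cover.

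Next I would treat $\psi$, now knowing $\varphi\in W^{l+1,p}$. By \eqref{eqnLocalHolomorphicSupercurve}, $\psi$ solves $\partial_s\psi+\tJ\,\partial_t\psi = -\tD\,\psi$, where $\tJ=J\circ\varphi\in W^{l,p}$ (in fact $W^{l+1,p}$) and $\tD=D(\varphi)\cdot\partial_s\varphi$ with $D\in C^l(V\times\bR^{2n},\bR^{2n\times2n})$ linear in the second slot. The inductive step for $\psi$: suppose $\psi\in W^{k,p}_{\mathrm{loc}}$ for some $1\le k\le l$. Then $D(\varphi)\in W^{k,p}_{\mathrm{loc}}$ (composition of the $C^l\supseteq C^k$ map $D$ with $\varphi\in W^{l+1,p}\subseteq W^{k,p}$), and $\partial_s\varphi\in W^{l,p}\subseteq W^{k,p}_{\mathrm{loc}}$, so by the product estimate \eqref{eqnProductSobolev} (applicable since $kp>2$) we get $\tD\in W^{k,p}_{\mathrm{loc}}$; multiplying again by $\psi\in W^{k,p}_{\mathrm{loc}}$ gives $\eta:=-\tD\,\psi\in W^{k,p}_{\mathrm{loc}}$. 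Since also $\tJ\in W^{l,p}$ with $\tJ^2=-\id$ and a local bound, Lem.~\ref{lemEllipticBootstrapping} (with that $k$ and this $\eta$) yields $\psi\in W^{k+1,p}_{\mathrm{loc}}$. Starting at $k=1$ (where $\eta=-\tD\psi\in L^p\cdot W^{1,p}\subseteq L^p$ suffices for the base case, or one may note $\psi\in W^{1,p}$ is given) and iterating to $k=l$ produces $\psi\in W^{l+1,p}_{\mathrm{loc}}$, hence $\psi\in W^{l+1,p}(\Sigma,L\otimes_J\varphi^*TX)$ globally. Finally, $W^{l+1,p}\hookrightarrow C^l$ by Morrey \eqref{eqnMorrey} (with $n=2<p$), giving $(\varphi,\psi)\in C^l$; and if $(A,J)$ are smooth we may take $l$ arbitrarily large, so $(\varphi,\psi)$ are smooth.

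The main obstacle — really the only subtlety — is bookkeeping the regularity of the coefficient $\tD$ at each stage: one must verify that $D(\varphi)\cdot\partial_s\varphi$ and its product with $\psi$ stay in $W^{k,p}_{\mathrm{loc}}$ as $k$ increases, which is exactly where the composition property of $W^{k,p}$ maps with $C^l$ maps and the Banach-algebra product estimate \eqref{eqnProductSobolev} are used, and why the hypothesis $p>2$ (so that $kp>2$ for all $k\ge1$) is essential. Once the $\varphi$-regularity has been pushed to $W^{l+1,p}$ first, the $\psi$-step is forced, because then $\tD$ is as regular as needed and the remaining loss of one derivative in forming $\tD\cdot\psi$ is precisely compensated by the gain in Lem.~\ref{lemEllipticBootstrapping}.
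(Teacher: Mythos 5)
Your proposal is correct and follows essentially the same route as the paper: localise via Lem.~\ref{lemLocalHolomorphicSupercurve}, bootstrap $\varphi$ first using Lem.~\ref{lemEllipticBootstrapping} with $\eta=0$, then bootstrap $\psi$ with $\eta=-\tD\psi$, tracking the regularity of $\tJ$ and $\tD$ through the composition and product properties of $W^{k,p}$, and finish with the Sobolev embedding. The only cosmetic difference is that the paper pushes $\varphi$ one step further to $W^{l+2,p}_{\mathrm{loc}}$ before turning to $\psi$, which is harmless but, as your bookkeeping shows, not needed, since $\varphi\in W^{l+1,p}$ already gives $\tD\in W^{l,p}_{\mathrm{loc}}$.
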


\begin{proof}
It suffices to consider $(\varphi,\psi)$ as a local holomorphic supercurve as in
Lem. \ref{lemLocalHolomorphicSupercurve}, on a coordinate chart $U$. By assumption, it follows that
$\tJ\in W^{1,p}(U,\bR^{2n\times 2n})$ and, applying Lem. \ref{lemEllipticBootstrapping} with
(\ref{eqnLocalHolomorphicCurve}) and $\eta:=0$, we obtain $\varphi\in W^{2,p}_{\mathrm{loc}}(U,\bR^{2n})$
from which, in turn, $\tJ\in W^{2,p}_{\mathrm{loc}}(U,\bR^{2n\times 2n})$ follows.
Continue by induction to obtain $\varphi\in W^{k+1,p}_{\mathrm{loc}}(U,\bR^{2n})$ for $k=1,\ldots,l+1$
and, therefore, $\varphi\in W^{l+2,p}_{\mathrm{loc}}(U,\bR^{2n})$.

It follows that $\tJ\in W^{l+1,p}_{\mathrm{loc}}(U,\bR^{2n\times 2n})$ and, from the construction of $\tD$,
that $\tD\in W^{l,p}_{\mathrm{loc}}(U,\bR^{2n\times 2n})$.
Applying Lem. \ref{lemEllipticBootstrapping} with (\ref{eqnLocalHolomorphicSupercurve})
and $\eta:=-\tilde{D}\cdot\psi\in W^{1,p}_{\mathrm{loc}}(U,\bR^{2n})$, we obtain
$\psi\in W^{2,p}_{\mathrm{loc}}(U,\bR^{2n})$
from which, in turn, $\eta\in W^{2,p}_{\mathrm{loc}}(U,\bR^{2n\times 2n})$ follows.
Continue by induction to obtain $\psi\in W^{k+1,p}_{\mathrm{loc}}(U,\bR^{2n})$ for $k=1,\ldots,l$
and, therefore, $\psi\in W^{l+1,p}_{\mathrm{loc}}(U,\bR^{2n})$.

By the Sobolev estimate (\ref{eqnMorrey}), $(\varphi,\psi)$ are in particular of class $C^l$.
\end{proof}

\begin{Prp}[Compactness]
\label{prpPhiPsiConverging}
Let $(A,J)$ be a connection and an almost complex structure on $X$, both of regularity class $C^{l+1}$,
and let $(A^{\nu},J^{\nu})$ be a sequence of such objects that converges to $(A,J)$ in the
$C^{l+1}$-topology. Let $j^{\nu}$ be a sequence of complex structures on $\Sigma$
converging to $j$ in the $C^{\infty}$-topology, and let $U^{\nu}\subseteq\Sigma$ be an
increasing sequence of open sets whose union is $\Sigma$. Let $(\varphi^{\nu},\psi^{\nu})$
be a sequence of $(A^{\nu},J^{\nu})$-holomorphic supercurves
\begin{align*}
\varphi^{\nu}\in W^{1,p}(U^{\nu},X)\;,\qquad
\psi^{\nu}\in W^{1,p}(U^{\nu},L\otimes_{J^{\nu}}(\varphi^{\nu})^*TX)
\end{align*}
such that $p>2$ and assume that, for every compact set $Q\subseteq\Sigma$,
there exists a compact set $K\subseteq X$ and a constant $c>0$ such that
\begin{align*}
\norm{d\varphi^{\nu}}_{Q,p}\leq c\;,\qquad\varphi^{\nu}(Q)\subseteq K\;,\qquad
\norm{\psi^{\nu}}_{Q,p}\leq c
\end{align*}
for $\nu$ sufficiently large.
Then there exists a subsequence of $(\varphi^{\nu},\psi^{\nu})$, which converges in
the $C^l$-topology on every compact subset of $\Sigma$.
\end{Prp}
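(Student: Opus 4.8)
The plan is to combine a localisation argument with uniform‑in‑$\nu$ elliptic bootstrapping in the spirit of the proof of Prp.~\ref{prpEllipticRegularity}, and then to invoke the compactness of the embedding $W^{l+1,p}\hookrightarrow C^l$ stated after \eqref{eqnMorrey}. First note that the hypotheses are quantitative: in a fixed coordinate chart of $X$, $\varphi^\nu(Q)\subseteq K$ gives a uniform $L^\infty$‑bound on $\varphi^\nu$, which together with $\norm{d\varphi^\nu}_{Q,p}\le c$ yields a uniform $W^{1,p}$‑bound on $\varphi^\nu$ over compact subsets of $\Sigma$. I would then cover the compact surface $\Sigma$ by finitely many closed coordinate discs $\overline{D_i}\subseteq U_i$ on each of which $L$ is trivial, with $U_i$ small enough that, by the scale‑invariant Hölder version of Morrey's inequality \eqref{eqnMorrey} applied with the uniform $W^{1,p}$‑bound, the oscillation of $\varphi^\nu$ on $U_i$ is small uniformly in $\nu$; passing to a subsequence, we may assume $\varphi^\nu(U_i)$ is contained in a single coordinate chart $V_i$ of $X$ for all large $\nu$, and since $U^\nu$ exhausts $\Sigma$ we also have $\overline{U_i}\subseteq U^\nu$ eventually. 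It then suffices to show that, for each $i$, a subsequence of the coordinate representatives of $(\varphi^\nu,\psi^\nu)$ — with $\psi^\nu$ read off via a local frame of $L$ and the coordinate frame of $TX$, hence as a map $U_i\to\bR^{2n}$ — converges in $C^l(\overline{D_i})$; a diagonal argument over the finitely many charts then produces one subsequence converging in $C^l$ on all of $\Sigma$, hence on every compact subset, and its $C^l$‑limits assemble into a pair $(\varphi,\psi):(\Sigma,L)\to X$ which, the defining equations being of first order, is an $(A,J)$‑holomorphic supercurve.

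\emph{Normal form with uniform coefficients.} On a fixed chart $U=U_i$, I would apply Lem.~\ref{lemLocalHolomorphicSupercurve} to each $(\varphi^\nu,\psi^\nu)$, taken with its domain complex structure $j^\nu$; because $j^\nu\to j$ in $C^\infty$, the required $j^\nu$‑holomorphic coordinates and local frames may be chosen converging to their $j$‑counterparts, so we obtain on $U$ a local holomorphic supercurve description \eqref{eqnLocalHolomorphicCurve}--\eqref{eqnLocalHolomorphicSupercurve} with $\nu$‑dependent data $\tilde{J}^\nu,\tilde{D}^\nu$. The $C^{l+1}$‑convergence $(A^\nu,J^\nu)\to(A,J)$ together with the bounds $\varphi^\nu(U)\subseteq V_i$ and $\norm{d\varphi^\nu}_{U,p}\le c$ gives uniform estimates $\norm{\tilde{J}^\nu}_{U,1,p}\le c_0$ and $\norm{\tilde{D}^\nu}_{U,p}\le c_0$; note that $(\tilde{J}^\nu)^2=-\id$, so Lem.~\ref{lemEllipticBootstrapping} applies to each $\nu$ with a constant depending only on the relevant uniform bound on $\tilde{J}^\nu$.

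\emph{Uniform bootstrapping.} For $\varphi^\nu$ this is exactly the induction in Prp.~\ref{prpEllipticRegularity}: from $\varphi^\nu\in W^{1,p}$ (uniformly) and \eqref{eqnLocalHolomorphicCurve} with $\eta=0$ one gets $\varphi^\nu\in W^{k+1,p}_{\mathrm{loc}}$ with $\nu$‑uniform bounds on shrinking subcharts for $k=1,\dots,l+1$, whence $\tilde{J}^\nu$ and $\tilde{D}^\nu$ are $\nu$‑uniformly bounded in $W^{l+1,p}$ and $W^{l,p}$ respectively. For $\psi^\nu$ only a uniform $L^p$‑bound is given, so a preliminary gain is needed: writing \eqref{eqnLocalHolomorphicSupercurve} as $\partial_s\psi^\nu+\tilde{J}^\nu\partial_t\psi^\nu=-\tilde{D}^\nu\psi^\nu$ with $\tilde{D}^\nu\in L^p$ and $\psi^\nu\in L^q$ (initially $q=p$), the right‑hand side lies in $L^r$ with $\tfrac1r=\tfrac1p+\tfrac1q$, so the Calderón--Zygmund inequality underlying Lem.~\ref{lemEllipticBootstrapping}, valid at any exponent in $(1,\infty)$, yields $\psi^\nu\in W^{1,r}_{\mathrm{loc}}$, which embeds into $L^{q'}$ with $\tfrac1{q'}=\tfrac1p+\tfrac1q-\tfrac12$ as long as this is positive and into $C^0$ otherwise; as the gain $\tfrac12-\tfrac1p>0$ in the reciprocal exponent is independent of $\nu$, after finitely many steps $\psi^\nu$ is $\nu$‑uniformly bounded in $C^0$, hence — one further application of Lem.~\ref{lemEllipticBootstrapping} — in $W^{1,p}$ on a slightly smaller subchart, uniformly in $\nu$. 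From there $\psi^\nu$ is bootstrapped to a $\nu$‑uniform $W^{l+1,p}$‑bound exactly as in Prp.~\ref{prpEllipticRegularity}, using the uniform $W^{l,p}$‑bound on $\tilde{D}^\nu$.

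\emph{Conclusion, and the main obstacle.} With $\nu$‑uniform $W^{l+1,p}$‑bounds for $(\varphi^\nu,\psi^\nu)$ on a neighbourhood of $\overline{D_i}$ in hand, the compactness of $W^{l+1,p}\hookrightarrow C^l$ yields a subsequence converging in $C^l(\overline{D_i})$; combined with the localisation step this proves the proposition. The one genuinely delicate point is keeping every estimate uniform in $\nu$: this rests on the uniform coefficient bounds of the normal‑form step — hence on the $C^{l+1}$‑convergence of $(A^\nu,J^\nu)$, the $C^\infty$‑convergence of $j^\nu$, and the quantitative hypotheses on $\varphi^\nu$ and $\psi^\nu$ — and, less obviously, on the preliminary regularity gain for $\psi^\nu$, which cannot be started from Lem.~\ref{lemEllipticBootstrapping} at exponent $p$ because $\tilde{D}^\nu\psi^\nu$ is a priori only of class $L^{p/2}$.
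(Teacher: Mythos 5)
Your proposal is correct and follows the same overall strategy as the paper: localisation to charts, elliptic bootstrapping with $\nu$-uniform constants via Lem.~\ref{lemEllipticBootstrapping}, and the compact embedding $W^{l+1,p}\hookrightarrow C^l$. Two differences are worth noting. First, the paper does not redo the bootstrapping for $\varphi^{\nu}$: it quotes the well-known compactness result for $J$-holomorphic curves to extract a $C^{l+1}$-convergent subsequence of $\varphi^{\nu}$ right away, and only then treats $\psi^{\nu}$; your direct uniform bootstrapping of $\varphi^{\nu}$ is a legitimate (if more laborious) substitute. Second, your ``main obstacle'' is a phantom, and the concluding claim that the bootstrap for $\psi^{\nu}$ ``cannot be started from Lem.~\ref{lemEllipticBootstrapping} at exponent $p$'' is not correct in the situation you have set up: once $\varphi^{\nu}$ is uniformly controlled in $W^{l+2,p}_{\mathrm{loc}}$ (or merely converges in $C^{l+1}$, as in the paper), the coefficient $\tD^{\nu}=D^{\nu}(\varphi^{\nu})\cdot\partial_s\varphi^{\nu}$ is uniformly bounded in $W^{l,p}$, hence in $C^{l-1}\subseteq L^{\infty}$, so $\tD^{\nu}\psi^{\nu}$ lies uniformly in $L^{p}$ and the very first application of Lem.~\ref{lemEllipticBootstrapping} at exponent $p$ goes through --- this is exactly what the paper does, estimating $\norm{\tD^{\nu}\psi^{\nu}}_{Q_k,k,p}\leq\norm{\tD^{\nu}}_{U,\infty}\norm{\psi^{\nu}}_{Q_k,k,p}$ and iterating from $k=0$ up to $k=l$. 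Your preliminary Lebesgue-exponent iteration is therefore redundant (and, as you note, it leans on Calder\'on--Zygmund at exponents at or below $2$, which is outside Lem.~\ref{lemEllipticBootstrapping} as stated, though true of the underlying inequality); it would only be needed if one refused to upgrade $\varphi^{\nu}$ first. The remaining points --- handling of $j^{\nu}$ by choosing converging holomorphic coordinates, the exhaustion $U^{\nu}$, the single-chart reduction via the Morrey oscillation bound, and the diagonal argument --- are fine and consistent with the paper's (more tersely stated) argument.
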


\begin{proof}
By an elliptic bootstrapping argument, using Lem. \ref{lemEllipticBootstrapping}, one
shows, consecutively, that a subsequence of $\varphi^{\nu}$ converges in the $C^{l+1}$-topology,
and that a (further) subsequence of $\psi^{\nu}$ converges in the $C^l$-topology.
Both steps are similar to each other and, since the statement for $\varphi^{\nu}$ is well-known,
we prove it only for $\psi^{\nu}$.

Hence, we may assume that $\varphi^{\nu}$ converges to a map $\varphi\in C^{l+1}(\Sigma,X)$
in the $C^{l+1}$-topology. Let $U\subseteq\Sigma$ be sufficiently small such that
$L|_U$ is trivial and $\varphi(U)$ is contained in a coordinate chart of $X$. Then,
for large $\nu$, we may consider $(\varphi^{\nu},\psi^{\nu})$ as a local holomorphic supercurve
on $U$. By Prp. \ref{prpEllipticRegularity}, it follows that
$(\varphi^{\nu},\psi^{\nu})\in W^{l+1,p}_{\mathrm{loc}}(U,\bR^{2n})$ holds,
and Lem. \ref{lemEllipticBootstrapping} yields the following estimate
for $0\leq k\leq l$ and compact subsets $Q_{k+1}\subseteq Q_k\subseteq U$
such that $Q_{k+1}\subseteq\mathrm{int}(Q_k)$.
\begin{align*}
\norm{\psi^{\nu}}_{Q_{k+1},k+1,p}
&\leq c^{\nu}\left(\norm{\partial_s\psi^{\nu}+\tJ^{\nu}\cdot\partial_t\psi^{\nu}}_{Q_k,k,p}
+\norm{\psi^{\nu}}_{Q_k,k,p}\right)\\
&=c^{\nu}\left(\norm{\tD^{\nu}\psi^{\nu}}_{Q_k,k,p}+\norm{\psi^{\nu}}_{Q_k,k,p}\right)\\
&\leq c\left(\norm{\tD^{\nu}}_{U,\infty}\norm{\psi^{\nu}}_{Q_k,k,p}+\norm{\psi^{\nu}}_{Q_k,k,p}\right)\\
&=c^{\nu}\left(\norm{\tD^{\nu}}_{U,\infty}+1\right)\norm{\psi^{\nu}}_{Q_k,k,p}\\
&=:C^{\nu}\norm{\psi^{\nu}}_{Q_k,k,p}
\end{align*}
where $\tJ^{\nu}:=J\circ\varphi^{\nu}$ and $\tD^{\nu}$ are as
in Def. \ref{defLocalHolomorphicSupercurve}, and $c^{\nu}$
denotes the constant from Lem. \ref{lemEllipticBootstrapping} which depends on $\tJ^{\nu}$.
By convergence of $\varphi^{\nu}$, $C^{\nu}$ has a uniform upper bound $C<\infty$ and,
therefore, the norm
\begin{align*}
\sup_{\nu}\norm{\psi^{\nu}}_{Q_{l+1},l+1,p}\leq C\sup_{\nu}\norm{\psi^{\nu}}_{Q_l,l,p}
\leq\ldots\leq C^{l+1}\sup_{\nu}\norm{\psi^{\nu}}_{Q_0,p}<\infty
\end{align*}
is uniformly bounded.
Since the inclusion $W^{l+1,p}(Q)\rightarrow C^l(Q)$ is compact
(provided that the boundary of $Q$ is Lipschitz), this proves the existence of a
subsequence which converges in the $C^l$-topology.
\end{proof}

\begin{Rem}
Convergence of $\psi^{\nu}$ may also be understood in a global fashion as follows.
If $\varphi^{\nu}\rightarrow\varphi$ converges, we may identify $\psi^{\nu}$,
for $\nu$ sufficiently large, with a section $\psi^{\nu}\in W^{1,p}(U^{\nu},L\otimes_J\varphi^*TX)$
via the bundle trivialisation from Lem. \ref{lemGBanachBundle} below.
\end{Rem}

\section{Moduli Spaces and Transversality}
\label{secModuliSpaces}

We examine the moduli spaces of $(A,J)$-holomorphic supercurves.
Let $(X,\omega)$ be a compact symplectic manifold and denote by $\mJ:=\mJ(X,\omega)$
either the space of (smooth) $\omega$-tame or the space of $\omega$-compatible almost
complex structures on $X$. Moreover, let $\mA:=\mA(GL(X))$ be the space
of connections and denote the corresponding spaces of regularity class $C^l$ by
$\mJ^l$ and $\mA^l$, respectively.
Let $\mM^*(\beta,\Sigma;J)$ be as in (\ref{eqnModuliSpaceSimpleCurves}).

\begin{Def}
Let $(A,J)\in\mA\times\mJ$ and $\beta\in H_2(X,\bZ)$. Then we denote the \emph{moduli space
of (nontrivial) $(A,J)$-holomorphic supercurves representing $\beta$} by
\begin{align*}
&\hat{\mM}^*(\beta,\Sigma,L;A,J)\\
&\qquad\qquad:=\{(\varphi,\psi)\in \mM^*(\beta,\Sigma;J)\times
\Omega^0_*(\Sigma,L\otimes_J\varphi^*TX)\setsep\mD^{A,J}_{\varphi}\psi=0\}
\end{align*}
where $\Omega^0_*:=\Omega^0\setminus\{0\}$ is the space of nontrivial sections.
\end{Def}

The focus of this section is to show that the moduli spaces thus defined are oriented smooth manifolds,
provided that $(A,J)$ is chosen generic.
Here, it is important to exclude the zero sections $\psi\equiv 0$ for, otherwise,
our transversality argument below breaks down. En passant, we will see that it does not suffice
to vary only $J$ but we need to enlarge the parameter space towards pairs $(A,J)$ as incorporated in Def.
\ref{defAJHolomorphicSupercurve}. Moreover, one encounters the usual problem that
spaces of smooth objects are, in general, not Banach manifolds but only Fréchet manifolds. There are two standard
approaches to resolve this issue: One due to Floer (cf. \cite{Flo88}),
that stays within the smooth category, and a second approach (as presented e.g. in Chp. 3 of \cite{MS04})
which we prefer here, that deals with objects of lower regularity in the first place and later shows independence of
the exact Sobolev spaces involved.
We introduce the operator
\begin{align*}
&\hat{\mD}_{\varphi,\psi}^{A,J}:\Omega^0(\Sigma,\varphi^*TX)\oplus\Omega^0(\Sigma,L\otimes_J\varphi^*TX)\\
&\qquad\qquad\rightarrow\Omega^{0,1}(\Sigma,\varphi^*TX)\oplus\Omega^{0,1}(\Sigma,L\otimes_J\varphi^*TX)
\end{align*}
by prescribing
\begin{align}
\label{eqnVerticalDifferential}
\hat{\mD}_{\varphi,\psi}^{A,J}[\Xi+\Psi]
:=D_{\varphi}\Xi+\mD_{\varphi}^{A,J}\Psi+\nabla_{\Xi}^{A,J}\left(\mD_{\varphi}^{A,J}\right)\psi
\end{align}
where, by definition, $\nabla_{\Xi}^{A,J}\left(\mD_{\varphi}^{A,J}\right)\psi
=\nabla_{\Xi}^{A,J}\left(\mD_{\varphi}^{A,J}\psi\right)-\mD_{\varphi}^{A,J}\nabla_{\Xi}^{A,J}\psi$.
We will see in Sec. \ref{subsecProofsTransversality} that this is exactly the vertical differential
$\hat{\mD}_{\varphi,\psi}^{A,J}=D_{(\varphi,\psi)}\fF$ of a map $\fF$ such that
$\hat{\mM}^*(\beta,\Sigma,L;A,J)=\fF^{-1}(0)$.

\begin{Def}
\label{defSuperregular}
A pair $(A,J)\in\mA\times\mJ$ is called \emph{regular} (for $\beta$, $\Sigma$ and $L$)
if $\hat{\mD}_{\varphi,\psi}^{A,J}$ is surjective for every $(\varphi,\psi)\in\hat{\mM}^*(\beta,\Sigma,L;A,J)$.
We denote by $\mA\mJ_{\mathrm{reg}}:=\mA\mJ_{\mathrm{reg}}(\beta,\Sigma,L)$ the subset of all
$(A,J)\in\mA\times\mJ$ that are regular for $\beta$, $\Sigma$ and $L$.
\end{Def}

\begin{Thm}
\label{thmSuperregular}
$ $
\begin{enumerate}
\renewcommand{\labelenumi}{(\roman{enumi})}
\item If $(A,J)\in\mA\mJ_{\mathrm{reg}}(\beta,\Sigma,L)$, then the moduli space $\hat{\mM}^*(\beta,\Sigma,L;A,J)$ is a smooth
manifold of finite dimension
\begin{align*}
\mathrm{ind}_{\bR}D_{\varphi}+\mathrm{ind}_{\bR}\mD_{\varphi}^{A,J}=n(4-4g)+4c_1(\beta)+2n\deg L
\end{align*}
It carries a natural orientation.

\item The set $\mA\mJ_{\mathrm{reg}}(\beta,\Sigma,L)$ is of the second category (in the sense of Baire)
in $\mA\times\mJ$.
\end{enumerate}
\end{Thm}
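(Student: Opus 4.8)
The plan is to follow the now-standard Sard--Smale scheme for transversality of holomorphic curves (as in \cite{MS04}, Chp. 3), adapted to the coupled system for the pair $(\varphi,\psi)$. First I would set up the analytic framework: fix $p>2$ and $l$ large, and realise $\hat{\mM}^*(\beta,\Sigma,L;A,J)$ for varying parameters as the zero set of a section $\fF$ of a Banach space bundle $\fE\to\mB\times\mA^l\times\mJ^l$, where the base $\mB$ consists of $W^{1,p}$-pairs $(\varphi,\psi)$ with $\varphi$ simple and $\psi\neq 0$ representing $\beta$. The fibre over $(\varphi,\psi,A,J)$ is $L^p(\Omega^{0,1}(\varphi^*TX))\oplus L^p(\Omega^{0,1}(L\otimes_J\varphi^*TX))$, and $\fF(\varphi,\psi,A,J)=(\dbar_J\varphi,\mD_{\varphi}^{A,J}\psi)$. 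One must check $\fF$ is smooth (using the Sobolev multiplication and composition facts recalled in Sec. \ref{secProperties}) and that its vertical differential in the $(\varphi,\psi)$-directions at a zero is exactly $\hat{\mD}_{\varphi,\psi}^{A,J}$ of \eqref{eqnVerticalDifferential}, which by \eqref{eqnDOperator} and \eqref{eqnLinearisedDbarCompatible} is, in suitable trivialisations, a zeroth-order perturbation of a direct sum of complex-linear Cauchy--Riemann operators, hence Fredholm of the asserted real index by Riemann--Roch (the index of $D_{\varphi}$ is $n(2-2g)+2c_1(\beta)$ as in \eqref{eqnDimensionClassicalModuliSpace}, and that of $\mD_{\varphi}^{A,J}$ is $2n(1-g)+2(c_1(\beta)+n\deg L)$ since it acts on sections of a bundle of complex rank $n$ and degree $c_1(\beta)+n\deg L$; summing gives the stated dimension). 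Standard continuity of this index and the surjectivity of the curve-part $D_\varphi$ being already known, the manifold structure of part (i) on the regular set follows from the implicit function theorem applied fibrewise, and the orientation is the determinant-line orientation of a family of Cauchy--Riemann operators, as in \cite{MS04}.

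The heart of part (ii) is to show that the \emph{universal} moduli space $\hat{\mM}^*(\beta,\Sigma,L;\mA^l\times\mJ^l):=\fF^{-1}(0)$ is a Banach manifold, i.e. that the \emph{full} linearisation $D\fF$ (including variations $\delta A$ of the connection and $\delta J$ of the almost complex structure) is surjective at every zero; this is what Prp. \ref{prpUniversalModuliSpace} (referenced above) should establish, and it is the step I expect to be the main obstacle. Because the cokernel of $\hat{\mD}_{\varphi,\psi}^{A,J}$ (the part of $D\fF$ that does not move parameters) is finite dimensional and the image is closed, it suffices to show that an element $(\eta,\zeta)$ in the $L^q$-dual ($1/p+1/q=1$) that annihilates the image of $\hat{\mD}_{\varphi,\psi}^{A,J}$ \emph{and} of the parameter variations must vanish. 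Annihilating $D_\varphi$-variations of $\eta$ together with $\delta J$-variations is precisely the classical argument for $\mM^*(\beta,\Sigma;J)$ and forces $\eta\equiv 0$ off a finite set, hence (by the Carleman-type unique continuation contained in Lem. \ref{lemCarlemanSimilarity}/\ref{lemCROperatorZeroes} applied to the formal adjoint) $\eta\equiv 0$. It remains to kill $\zeta$: here one exploits the variation $\delta A$ of the connection, which enters $\mD_{\varphi}^{A,J}$ linearly through the term $C=(A\circ d\os)-J\cdot(A\circ d\os)\cdot J-J\cdot dJ$ in \eqref{eqnLocalComplexConnection}, i.e. as a pointwise bundle endomorphism built from $\psi$ and $\partial_J\varphi$. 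Since $\psi\not\equiv0$ and (after discarding finitely many zeros of both $\psi$ and $d\varphi$, using $\varphi$ simple) $\psi(z)$ is nonzero on a dense open set, one can locally choose $\delta A$ so that $\langle\delta A\cdot\text{(something built from }\psi),\zeta\rangle$ is an arbitrary $L^p$-function there; pairing against $\zeta$ forces $\zeta\equiv0$ on that open set, and unique continuation for the adjoint Cauchy--Riemann operator $(\mD^{A,J}_{\varphi})^*$ extends this to all of $\Sigma$. This is exactly where enlarging the parameter space from $J$ alone to the pair $(A,J)$ is essential: varying $J$ changes $\mD_{\varphi}^{A,J}$ only through $N_J$-type terms that may be insufficient (indeed vanish in the Kähler case), whereas $\delta A$ gives full pointwise control on the $\psi$-support; this is the ``en passant'' remark made before Def. \ref{defSuperregular}.

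With the universal moduli space shown to be a Banach manifold, part (ii) follows the template precisely: the projection $\pi:\hat{\mM}^*(\beta,\Sigma,L;\mA^l\times\mJ^l)\to\mA^l\times\mJ^l$ is a smooth Fredholm map of the same index as $\hat{\mD}_{\varphi,\psi}^{A,J}$, a pair $(A,J)$ of class $C^l$ is regular (for $C^l$-data) exactly when it is a regular value of $\pi$, and the Sard--Smale theorem gives that the regular $C^l$-pairs are comeagre in $\mA^l\times\mJ^l$ once $l$ is chosen larger than this Fredholm index. The final passage from $C^l$-regularity to smooth regularity is the Taubes trick: one writes $\mA\mJ_{\mathrm{reg}}$ as a countable intersection, over compact exhaustions and over $C^l$-approximations, of open dense subsets defined using the compactness statement Prp. \ref{prpPhiPsiConverging} (to guarantee that limits of supercurves with bounded energy stay in the moduli space and that local surjectivity is an open condition), and invokes elliptic regularity Prp. \ref{prpEllipticRegularity} to see that smooth data giving a smooth regular pair coincide with the $C^l$ notion; Baire's theorem then yields that $\mA\mJ_{\mathrm{reg}}(\beta,\Sigma,L)$ is of the second category in $\mA\times\mJ$. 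The only genuinely new ingredient beyond the classical holomorphic-curve argument is the connection-variation step in the previous paragraph; the rest is bookkeeping, with care needed that the finitely many bad points (zeros of $\psi$, branch/injectivity failures of $\varphi$) do not obstruct the local perturbations and that the cutoff functions used to localise $\delta A$ and $\delta J$ keep the resulting variations in the correct regularity class.
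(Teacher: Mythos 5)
Your proposal follows essentially the same route as the paper: the Banach-bundle setup with the Fredholm operator $\hat{\mD}^{A,J}_{\varphi,\psi}$ and determinant-line orientation for (i), and for (ii) the universal moduli space made a Banach manifold by killing the $L^q$-annihilator through connection variations $\delta A$ at points where $\psi\neq 0$, $d\varphi\neq 0$ and $\varphi$ is injective, followed by Sard--Smale and the Taubes intersection argument using the compactness and elliptic regularity results. The only cosmetic differences (treating the dual pair $(\eta,\zeta)$ jointly rather than reducing to the $(\Psi,Q)$-restriction, and invoking unique continuation for the adjoint where the paper uses density of good points plus continuity) do not change the substance.
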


This is our main theorem. We shall delay the proof to subsection \ref{subsecProofsTransversality} below.
It does not imply that the moduli spaces are non-empty.
Existence of holomorphic curves (with trivial homology class) is clear: If $\varphi:\Sigma\rightarrow X$ is constant,
it does in particular satisfy $\dbar_J\varphi=0$. The trivial case for $\psi\in\ker\mD_{\varphi}^{A,J}$
is $\psi\equiv 0$. This solution does always exist, but holomorphic curves $(\varphi,0)$ are excluded
from the moduli spaces. The following proposition yields a sufficient condition for nontrivial
holomorphic supercurves to exist.

\begin{Prp}
\label{prpExistenceOfSolutions}
Let $\varphi_0\in\mM^*(\beta,\Sigma;J)$ be a holomorphic curve. Then the set $V_{\varphi_0}$ of $(A,J)$-holomorphic
supercurves $(\varphi,\psi)$ such that $\varphi=\varphi_0$ is a complex vector space with dimension
at least
\begin{align*}
\dim_{\bR}V_{\varphi_0}\geq n(2-2g)+2c_1(\beta)+2n\deg L
\end{align*}
In particular, non-trivial holomorphic supercurves exist if the right hand side is positive.
\end{Prp}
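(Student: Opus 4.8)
The plan is to fix the holomorphic curve $\varphi_0$ and observe that $V_{\varphi_0}$ is exactly the kernel of the complex linear operator $\mD_{\varphi_0}^{A,J}:\Omega^0(\Sigma,L\otimes_J\varphi_0^*TX)\rightarrow\Omega^{0,1}(\Sigma,L\otimes_J\varphi_0^*TX)$, since the first equation $\dbar_J\varphi=0$ is already satisfied by hypothesis and is decoupled from $\psi$. Because $\mD_{\varphi_0}^{A,J}$ was shown (right after equation (\ref{eqnDOperator})) to be a complex linear Cauchy-Riemann operator, its kernel is a complex vector space; this immediately gives the vector space statement. It remains to bound its dimension from below.

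The key step is the index computation together with the inequality $\dim_{\bC}\ker T\geq\mathrm{ind}_{\bC}T$, valid for any Fredholm operator $T$ since $\dim_{\bC}\ker T-\dim_{\bC}\mathrm{coker}\,T=\mathrm{ind}_{\bC}T$ and the cokernel dimension is nonnegative. So I would first invoke the Riemann-Roch theorem for complex linear Cauchy-Riemann operators (as in Appendix C of \cite{MS04}) to compute
\begin{align*}
\mathrm{ind}_{\bC}\mD_{\varphi_0}^{A,J}=n\cdot\mathrm{rk}_{\bC}+\deg\bigl(L\otimes_{\bC}\varphi_0^*T^{1,0}X\bigr)
\end{align*}
where the rank of $L\otimes_{\bC}\varphi_0^*T^{1,0}X$ over $\bC$ is $n$, its first Chern number is $n\deg L+c_1(\varphi_0^*T^{1,0}X,J)=n\deg L+c_1(\beta)$, and the Euler characteristic contributes $n(1-g)$. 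Hence $\mathrm{ind}_{\bC}\mD_{\varphi_0}^{A,J}=n(1-g)+c_1(\beta)+n\deg L$, so that $\mathrm{ind}_{\bR}\mD_{\varphi_0}^{A,J}=n(2-2g)+2c_1(\beta)+2n\deg L$, which is precisely the claimed lower bound. The final sentence of the proposition is then immediate: if this number is positive, the kernel is nontrivial, and any nonzero $\psi$ in it gives, by Lem. \ref{lemCROperatorZeroes} and elliptic regularity (Prp. \ref{prpEllipticRegularity}), a genuine nontrivial smooth $(A,J)$-holomorphic supercurve.

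The main (minor) obstacle is bookkeeping with the complexification: one must be careful that $\mD_{\varphi_0}^{A,J}$ acts on $\Omega^0(\Sigma,L\otimes_J\varphi_0^*TX)$, where the tensor product is taken over $J$, and that this space is $\bC$-isomorphic as a module over the relevant Cauchy-Riemann structure to sections of the holomorphic line bundle tensored with $\varphi_0^*T^{1,0}X$; this identification is exactly the one already used (via a local holomorphic section $\theta$) in the definition of $\mD_{\varphi}^{A,J}$, and it respects the Cauchy-Riemann structures because $\theta$ is holomorphic. Once that identification is in place, the degree of the bundle is additive and the computation is routine. I do not need surjectivity of $\mD_{\varphi_0}^{A,J}$ anywhere — only the Fredholm property and the index — so no genericity of $(A,J)$ enters, consistent with the proposition being stated for arbitrary $(A,J)$.
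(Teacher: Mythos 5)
Your proposal is correct and follows essentially the same route as the paper: identify $V_{\varphi_0}$ with $\ker\mD_{\varphi_0}^{A,J}$, note that this kernel is a complex vector space because the operator is complex linear, and bound its dimension below by the Fredholm index computed via Riemann--Roch (the paper packages this index computation as Lem.~\ref{lemComplexCauchyRiemann}). Your extra bookkeeping on the identification via $\theta$ and the appeal to Lem.~\ref{lemCROperatorZeroes} and Prp.~\ref{prpEllipticRegularity} for nontriviality is consistent with, though more detailed than, the paper's two-line argument.
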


For the important case $\Sigma=S^2$ (i.e. $g=0$) and $L=S^+$ (i.e. $\deg L=-1$), the condition
thus becomes $\dim_{\bR}V_{\varphi_0}\geq 2c_1(\beta)$. The (easy) proof of Prp. \ref{prpExistenceOfSolutions}
will be given in the next subsection.
Although the spaces $V_{\varphi}$ are linear, it is not true, in general, that the moduli space
$\hat{\mM}^*(\beta,\Sigma,L;A,J)$ is a vector bundles over the classical moduli space $\mM^*(\beta,\Sigma;J)$,
not even for $(A,J)$ regular. This is because, for $\psi=0$, the transversality argument used in
the proof of Thm. \ref{thmSuperregular} below breaks down, so there may be singularities.

\subsection{Fredholm Theory}

We study the operators $\mD_{\varphi}^{A,J}$ and $\hat{\mD}_{\varphi,\psi}^{A,J}$.
Let us first recall some properties of Cauchy-Riemann operators of lower regularity.
Let $E\rightarrow\Sigma$ be a complex vector bundle of regularity class $W^{l,p}$.
For everything to be well-defined it is important here that $p>2$ (see the discussion in
Sec. \ref{secProperties} above). We say that a \emph{real (complex) linear Cauchy-Riemann operator
of class $W^{l-1,p}$} on $E$ is an $\bR$-linear ($\bC$-linear) operator
\begin{align*}
D:W^{l,p}(\Sigma,E)\rightarrow W^{l-1,p}(\Sigma,\Lambda^{0,1}\Sigma\otimes_{\bC}E)
\end{align*}
satisfying the Leibniz rule $D_{\varphi}(f\xi)=f(D_{\varphi}\xi)+(\dbar f)\xi$ for all $f\in W^{l,p}(\Sigma,\bR)$
(or, respectively, $f\in W^{l,p}(\Sigma,\bC)$).
Moreover, we say that a connection $\nabla$ is of regularity class $W^{l-1,p}$ if the corresponding connection potential
$A$ is of this class. Analogous to the smooth case,
any connection $\nabla$ on $E$ of regularity class $W^{l-1,p}$ gives rise to a real linear Cauchy-Riemann
operator via $D:=\nabla^{0,1}$.
It is further clear that the sum $D+\alpha$ of a real linear Cauchy-Riemann operator $D$ with some
$\alpha\in W^{l-1,p}(\Sigma,\Lambda^{0,1}\Sigma\otimes_{\bR}\mathrm{End}_{\bR}(E))$
is again a real linear Cauchy-Riemann operator.

Let $\bigwedge^k\Sigma:=\bigwedge^kT^*\Sigma$ be the bundle of complex valued $k$-forms on $\Sigma$
and $\scal{\cdot}{\cdot}$ denote either a Hermitian bundle metric on $E$ or
the bundle metric on $\bigwedge^k\Sigma\otimes_{\bC}E$, induced by the former and
a Riemann metric on $\Sigma$ in the conformal class corresponding to $j$.
Such a bundle metric induces $L^2$ inner products $\int_{\Sigma}\dvol_{\Sigma}\scal{\cdot}{\cdot}$ on the spaces
of $E$-valued $k$-forms on $\Sigma$ with respect to which the formal adjoint
$D^*:W^{l,p}(\Sigma,\Lambda^{0,1}\Sigma\otimes_{\bC}E)\rightarrow W^{l-1,p}(\Sigma,E)$
of a real linear Cauchy-Riemann operator $D$ of class $W^{l-1,p}$ is defined.
For a proof of the next theorem, consult e.g. Appendix C in \cite{MS04}.

\begin{Prp}[Riemann-Roch Theorem]
\label{prpRiemannRoch}
Let $D$ be a real linear Cauchy-Riemann operator on $E$ of class $W^{l-1,p}$ with
$p>2$. Then the following holds for every $k\in\{1,\ldots,l\}$ and $q>1$ such that $1/p+1/q=1$.
\begin{enumerate}
\renewcommand{\labelenumi}{(\roman{enumi})}
\item The operators $D:W^{k,p}(\Sigma,E)\rightarrow W^{k-1,p}(\Sigma,\Lambda^{0,1}\Sigma\otimes_{\bC}E)$ and
$D^*:W^{k,p}(\Sigma,\Lambda^{0,1}\Sigma\otimes_{\bC}E)\rightarrow W^{k-1,p}(\Sigma,E)$
are Fredholm with index
\begin{align*}
\mathrm{ind}_{\bR}D=-\mathrm{ind}_{\bR}D^*=n(2-2g)+2\scal{c_1(E)}{[\Sigma]}
\end{align*}
where $g$ denotes the genus of $\Sigma$ and $n$ is the (complex) rank of $E$.

\item If $\eta\in L^q(\Sigma,\Lambda^{0,1}\Sigma\otimes_{\bC}E)$ satisfies
$\int_{\Sigma}\scal{\eta}{D\xi}\dvol_{\Sigma}=0$
for every $\xi\in W^{k,p}(\Sigma,E)$ then $\eta\in W^{k,p}(\Sigma,\Lambda^{0,1}\Sigma\otimes_{\bC}E)$ and $D^*\eta=0$.

\item If $\xi\in L^q(\Sigma,E)$ satisfies $\int_{\Sigma}\scal{\xi}{D^*\eta}\dvol_{\Sigma}=0$ for every\\
$\eta\in W^{k,p}(\Sigma,\Lambda^{0,1}\Sigma\otimes_{\bC}E)$ then $\xi\in W^{k,p}(\Sigma,E)$ and $D\xi=0$.
\end{enumerate}
\end{Prp}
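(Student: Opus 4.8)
The plan is to treat the Riemann--Roch theorem for Cauchy--Riemann operators of low regularity as a perturbation of the classical smooth case, which is already available in the literature (Appendix C of \cite{MS04}). The key observation is that a real linear Cauchy--Riemann operator $D$ of class $W^{l-1,p}$ on $E$ can, after choosing a smooth structure on $E$ (possible since $p>2$ ensures transition maps are continuous and one may approximate), be written as $D=D_0+\alpha$ where $D_0$ is a \emph{smooth} real linear Cauchy--Riemann operator and $\alpha\in W^{l-1,p}(\Sigma,\Lambda^{0,1}\Sigma\otimes_{\bR}\mathrm{End}_{\bR}(E))$ is a zeroth order perturbation, as noted just before the statement. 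For the smooth operator $D_0$, statement (i) is the classical Riemann--Roch index theorem, and (ii), (iii) are the standard weak-regularity and elliptic-regularity assertions for the formal adjoint.

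First I would establish (i). Since $D_0:W^{k,p}\to W^{k-1,p}$ is Fredholm with the stated index (this is the smooth Riemann--Roch theorem), it remains to observe that multiplication by $\alpha$ maps $W^{k,p}(\Sigma,E)$ to $W^{k-1,p}(\Sigma,\Lambda^{0,1}\Sigma\otimes_{\bC}E)$ and that this operator is compact relative to $D_0$. Compactness follows from the Sobolev product estimate (\ref{eqnProductSobolev}) together with the compact embedding $W^{k,p}\hookrightarrow C^{k-1}$ recorded in Sec. \ref{secProperties}: writing $\xi\mapsto\alpha\xi$ as the composition of the compact inclusion $W^{k,p}\hookrightarrow W^{k-1+\varepsilon,p}$-type inclusion (or simply $W^{k,p}\hookrightarrow W^{k-1,p}$, compact on the closed surface $\Sigma$) with bounded multiplication by $\alpha$, one sees that $D=D_0+\alpha:W^{k,p}\to W^{k-1,p}$ is Fredholm of the same index by stability of the Fredholm index under compact perturbations. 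The identical argument applies to $D^*=D_0^*-\alpha^*$, whose index is then $-\mathrm{ind}_{\bR}D$ because $D_0^*$ is the formal adjoint of $D_0$ and adjoints of Fredholm operators have negative index (or directly from the smooth Riemann--Roch statement applied to $D_0^*$).

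Next I would prove (ii) and (iii); by the symmetry $D\leftrightarrow D^*$ it suffices to do one, say (ii). Given $\eta\in L^q$ with $\int_\Sigma\scal{\eta}{D\xi}\dvol_\Sigma=0$ for all $\xi\in W^{k,p}$, one tests against $\xi\in C^\infty$ first to conclude that $D^*\eta=0$ holds in the sense of distributions. One then bootstraps: locally, in a trivialisation and holomorphic coordinate $z=s+it$, the equation $D^*\eta=0$ takes the form $\partial_s\eta+J\partial_t\eta=(\text{zeroth order in }\eta)$ with coefficients of regularity $W^{l-1,p}$, so Lem. \ref{lemEllipticBootstrapping} (elliptic bootstrapping for the standard $\partial_s+J\partial_t$ operator) applied iteratively upgrades $\eta$ from $L^q$ to $L^p_{\mathrm{loc}}$ and then to $W^{k,p}_{\mathrm{loc}}$, hence globally $\eta\in W^{k,p}(\Sigma,\Lambda^{0,1}\Sigma\otimes_{\bC}E)$ since $\Sigma$ is compact. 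Once the regularity is in hand, $D^*\eta=0$ holds in the strong sense. Part (iii) is entirely analogous with the roles of $D$ and $D^*$ interchanged.

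The main obstacle I anticipate is bookkeeping the low-regularity coefficients through the bootstrapping in (ii)--(iii): one must check that the zeroth-order terms arising from $\alpha$ (and from the connection potential, which is only $W^{l-1,p}$) indeed lie in the Sobolev class required to feed Lem. \ref{lemEllipticBootstrapping} at each stage, i.e.\ that the product $\alpha\cdot\eta$ gains enough regularity at step $k$ to allow step $k+1$; this is exactly where the constraint $k\le l$ and the product estimate (\ref{eqnProductSobolev}) with $p>2$ become essential, and the induction must be set up carefully so that the regularity of $\eta$ never outruns that of the coefficients. Everything else is a routine transcription of the smooth theory, so I would keep the write-up brief and simply cite \cite{MS04} for the smooth index computation and the detailed form of the bootstrapping estimates.
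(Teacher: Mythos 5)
The paper itself gives no proof of Prp.~\ref{prpRiemannRoch}; it simply refers to Appendix~C of \cite{MS04}, and your overall strategy is precisely the argument carried out there: split $D=D_0+\alpha$ with $D_0$ a smooth Cauchy--Riemann operator and $\alpha\in W^{l-1,p}(\Sigma,\Lambda^{0,1}\Sigma\otimes_{\bR}\mathrm{End}_{\bR}(E))$, deduce (i) from the smooth Riemann--Roch index plus invariance of the index under compact perturbations, and obtain (ii), (iii) as weak-solution regularity statements. For (i) one small correction: for $l=1$ the coefficient $\alpha$ is only of class $L^p$, and multiplication by $\alpha$ is then \emph{not} bounded on $W^{k-1,p}$, so your factorisation ``compact inclusion $W^{k,p}\hookrightarrow W^{k-1,p}$ followed by bounded multiplication'' fails in that case; the factorisation that works uniformly in $1\le k\le l$ is $W^{k,p}\hookrightarrow C^{k-1}$ (compact, by (\ref{eqnMorrey})) followed by multiplication $C^{k-1}\to W^{k-1,p}$, which is bounded because $k-1\le l-1$. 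This is cosmetic, not a gap.

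The genuine gap is in your treatment of (ii) and (iii). Since $1/p+1/q=1$ and $p>2$, the exponent $q$ lies in $(1,2)$, whereas Lem.~\ref{lemEllipticBootstrapping} is stated only for the exponent $p>2$ and, moreover, only for functions that already lie in $L^p_{\mathrm{loc}}$. It therefore cannot perform the crucial first step of your bootstrap, the passage of $\eta$ from $L^q$ to $L^p_{\mathrm{loc}}$ (it does not even give $\eta\in W^{1,q}_{\mathrm{loc}}$). That initial step requires elliptic (Calder\'on--Zygmund) estimates for the standard $\dbar$-operator for \emph{all} exponents in $(1,\infty)$, typically established by a duality argument; this is exactly what Appendix~C of \cite{MS04} provides (Lemma~C.2.1 and the proof of Theorem~C.2.3) and what the paper implicitly invokes by its citation. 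Once $\eta$ is known to be of class $W^{1,p}_{\mathrm{loc}}$, your iteration using (\ref{eqnProductSobolev}) and the constraint $k\le l$ is the correct way to reach $W^{k,p}$, and the same remarks apply verbatim to (iii). So either quote the $L^q$-regularity statement for $1<q<2$ explicitly, or supply the duality step; as written, the appeal to Lem.~\ref{lemEllipticBootstrapping} for the sub-critical exponent does not go through.
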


Prp. \ref{prpRiemannRoch} shows, in particular, that every element in the kernel of $D$ for $k\leq l$
is automatically of class $W^{l,p}$ for any $p>2$, and so the kernel of $D$ does not depend on the precise
choice of the space on which the operator is defined. The same is true for the cokernel. In particular,
$D$ is surjective for some choice of $k$ and $p$ if and only if it is surjective for all such choices.

The following two lemmas follow from Prp. \ref{prpRiemannRoch} applied to the complex bundle
$E:=L\otimes_J\varphi^*TX$. Note that the regularity of $E$ is determined by $\varphi$. More precisely,
a simple consideration concerning transition maps for the pullback bundle $\varphi^*TX$ shows that
$E$ is of class $W^{l,p}$ if $\varphi$ is (for $p>2$).

\begin{Lem}
\label{lemComplexCauchyRiemann}
Let $l$ be a positive integer and $p>2$. Let $(A,J)\in\mA^{l+1}\times\mJ^{l+1}$
and $\varphi\in W^{l+1,p}$. Then $\mD_{\varphi}^{A,J}$ is a complex linear Cauchy-Riemann operator
of regularity class $W^{l,p}$ with Fredholm index
\begin{align*}
\mathrm{ind}_{\bR}\mD_{\varphi}^{A,J}=n(2-2g)+2c_1(\beta)+2n\deg L
\end{align*}
where, as usual, we denote $\beta:=[\varphi]\in H_2(X;\bZ)$.
\end{Lem}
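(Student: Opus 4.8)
The plan is to deduce everything from the Riemann-Roch theorem (Prp.~\ref{prpRiemannRoch}) applied to the complex bundle $E:=L\otimes_J\varphi^*TX$. First I would verify the hypotheses of that proposition. The bundle $E$ is of regularity class $W^{l,p}$: indeed $L$ is smooth, and the pullback $\varphi^*TX$ inherits class $W^{l,p}$ from $\varphi\in W^{l+1,p}$ (which embeds into $C^l$ for $p>2$, so transition maps $g_{\alpha\beta}\circ\varphi$ are of class $W^{l,p}$ by the composition and product properties recalled in Sec.~\ref{secProperties}). Similarly, with $(A,J)\in\mA^{l+1}\times\mJ^{l+1}$, the local coefficient $C=(A\circ d\os)-J\cdot(A\circ d\os)\cdot J-J\cdot dJ$ from Sec.~\ref{secProperties} is of class $C^l$ on $X$, hence its pullback along $\varphi$ is of class $W^{l,p}$, so $\mD_{\varphi}^{A,J}$ is a well-defined operator $W^{k,p}(\Sigma,E)\to W^{k-1,p}(\Sigma,\Lambda^{0,1}\Sigma\otimes_{\bC}E)$ for $k\le l$.

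Next I would check that $\mD_{\varphi}^{A,J}$ is a complex linear Cauchy-Riemann operator, i.e.\ that it is $\bC$-linear and satisfies the Leibniz rule $\mD_{\varphi}^{A,J}(f\psi)=f\,\mD_{\varphi}^{A,J}\psi+(\dbar f)\psi$ for $f\in W^{l,p}(\Sigma,\bC)$. This is exactly the ``straightforward calculation'' remarked on after equation~(\ref{eqnDOperator}): writing $\psi=\theta\cdot\psi_\theta$ locally, the term $(\nabla^{A,J}\psi_\theta)^{0,1}\cdot\theta$ contributes a $\bC$-linear connection-type piece (note $\nabla^{A,J}$ is the $J$-complexification of a connection, hence complex linear) whose $(0,1)$-part obeys the Leibniz rule, while the term $\psi_\theta\cdot(\dbar\theta)$ is zeroth order and $\bC$-linear; combining, one gets the stated Leibniz rule, and well-definedness (independence of the choice of $\theta$) follows because changing $\theta$ by a holomorphic transition function is compatible with the Dolbeault structure.

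Having identified $\mD_{\varphi}^{A,J}$ as a complex linear Cauchy-Riemann operator of class $W^{l,p}$, part~(i) of Prp.~\ref{prpRiemannRoch} immediately gives the Fredholm property for every $k\in\{1,\dots,l\}$ and the index formula
\begin{align*}
\mathrm{ind}_{\bR}\mD_{\varphi}^{A,J}=n'(2-2g)+2\scal{c_1(E)}{[\Sigma]},
\end{align*}
where $n'=\mathrm{rank}_{\bC}E$. So it remains to compute $n'$ and $\scal{c_1(E)}{[\Sigma]}$. Since $E=L\otimes_J\varphi^*TX$ and $\varphi^*TX$ has complex rank $n$ (with its $J$-complex structure) while $L$ is a line bundle, we get $n'=n$. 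For the Chern class, the multiplicativity of $c_1$ under tensoring with a line bundle gives $c_1(E)=n\,c_1(L)+c_1(\varphi^*TX)$, so $\scal{c_1(E)}{[\Sigma]}=n\deg L+\scal{\varphi^*c_1(TX,J)}{[\Sigma]}=n\deg L+\scal{c_1(TX,J)}{\varphi_*[\Sigma]}=n\deg L+c_1(\beta)$, using $\beta=[\varphi]=\varphi_*[\Sigma]$. Substituting yields
\begin{align*}
\mathrm{ind}_{\bR}\mD_{\varphi}^{A,J}=n(2-2g)+2c_1(\beta)+2n\deg L,
\end{align*}
as claimed. I do not expect a serious obstacle here; the only mild care needed is the bookkeeping of regularity classes (ensuring $\varphi\in W^{l+1,p}$, not merely $W^{1,p}$, is what makes $E$ of class $W^{l,p}$ and $\mD_{\varphi}^{A,J}$ of class $W^{l,p}$, so that Prp.~\ref{prpRiemannRoch} applies with the full range $k\le l$) and the verification that the local formula~(\ref{eqnDOperator}) genuinely defines a complex linear Cauchy-Riemann operator, which is the ``straightforward calculation'' the paper already invokes.
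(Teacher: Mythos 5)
Your proposal is correct and follows essentially the same route as the paper: identify $\mD_{\varphi}^{A,J}$ as a complex linear Cauchy-Riemann operator of class $W^{l,p}$ on $E=L\otimes_J\varphi^*TX$ (using the local formula to track regularity), then apply Prp.~\ref{prpRiemannRoch} and compute $c_1(L\otimes_J\varphi^*TX)$ via the line-bundle tensor formula. The extra detail you supply on the regularity of $E$ and on the Leibniz rule is exactly what the paper leaves as a remark and a ``straightforward calculation''.
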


\begin{proof}
By the local form of $(\nabla^{A,J})^{0,1}$ in (\ref{eqnLocalComplexConnection}), $\varphi$ and $J$ enter
with their first derivatives, so the operator is of class $W^{l,p}$. Like in the smooth case,
it is clear that $\mD_{\varphi}^{A,J}$ is still a
complex linear Cauchy-Riemann operator and, as such, satisfies the hypotheses of Prp. \ref{prpRiemannRoch}.
Its Fredholm index
\begin{align*}
\mathrm{ind}_{\bR}\mD_{\varphi}^{A,J}&=n(2-2g)+2\scal{c_1(L\otimes_J\varphi^*TX)}{[\Sigma]}\\
&=n(2-2g)+2\scal{1\cdot c_1(\varphi^*TX)}{[\Sigma]}+2\scal{n\cdot c_1(L)}{[\Sigma]}\\
&=n(2-2g)+2\scal{c_1(TX)}{[\varphi_*\Sigma]}+2n\scal{c_1(L)}{[\Sigma]}
\end{align*}
is easily calculated, using properties of the first Chern class.
\end{proof}

\begin{proof}[Proof of Prp. \ref{prpExistenceOfSolutions}]
$V_{\varphi}=\ker\mD_{\varphi}^{A,J}$ is clearly a complex vector space since
the operator is complex linear. By definition, the dimension of the kernel of a Fredholm operator is
greater than or equal to its Fredholm index (with equality in the surjective case).
\end{proof}

The expected dimensions of the moduli spaces stated in Thm. \ref{thmSuperregular} can be understood as follows.

\begin{Lem}
\label{lemCROperatorFredholmSum}
Let $l$ be a positive integer and $p>2$. Let $(A,J)\in\mA^{l+1}\times\mJ^{l+1}$
and $\varphi,\psi\in W^{l+1,p}$. Then $\hat{\mD}_{\varphi,\psi}^{A,J}$ is a real linear Cauchy-Riemann operator
of regularity class $W^{l,p}$ with Fredholm index
\begin{align*}
\mathrm{ind}_{\bR}\hat{\mD}_{\varphi,\psi}^{A,J}
=\mathrm{ind}_{\bR}D_{\varphi}+\mathrm{ind}_{\bR}\mD_{\varphi}^{A,J}=n(4-4g)+4c_1(\beta)+2n\deg L
\end{align*}
\end{Lem}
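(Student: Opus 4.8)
The plan is to realise $\hat{\mD}_{\varphi,\psi}^{A,J}$ on the complex bundle $E:=\varphi^*TX\oplus(L\otimes_J\varphi^*TX)$ as a real linear Cauchy--Riemann operator with a block-triangular structure --- Cauchy--Riemann operators on the diagonal and a zeroth-order operator as the only off-diagonal entry --- and then to extract the index from the Riemann--Roch theorem (Prp.~\ref{prpRiemannRoch}) applied to $E$.

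First I would dispose of the diagonal blocks. The operator $D_\varphi$ of (\ref{eqnLinearisedDbar}) is a real linear Cauchy--Riemann operator on $(\varphi^*TX,J)$; its coefficients are assembled from $J(\varphi)$, the Christoffel symbols of $g_J$ (of class $C^l$ since $J\in C^{l+1}$), $dJ$ and $d\varphi$, so that for $\varphi\in W^{l+1,p}$ it is of class $W^{l,p}$ (this is the computation in App.~B of \cite{MS04}). By Lem.~\ref{lemComplexCauchyRiemann}, $\mD_\varphi^{A,J}$ is a complex linear Cauchy--Riemann operator on $L\otimes_J\varphi^*TX$, also of class $W^{l,p}$. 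Hence $D_\varphi\oplus\mD_\varphi^{A,J}$ is a real linear Cauchy--Riemann operator on $E$ of class $W^{l,p}$.

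The main point is to show that the off-diagonal term $T\colon\Xi\mapsto\nabla_\Xi^{A,J}(\mD_\varphi^{A,J})\psi$ is of zeroth order in $\Xi$, i.e.\ given by a bundle homomorphism $\varphi^*TX\to\Lambda^{0,1}\Sigma\otimes_\bC(L\otimes_J\varphi^*TX)$ of class $W^{l,p}$. For this I would pick a path $t\mapsto\varphi_t$ in $W^{l+1,p}(\Sigma,X)$ with $\varphi_0=\varphi$ and $\partial_t|_{t=0}\varphi_t=\Xi$, and let $P_t\colon L\otimes_J\varphi_0^*TX\to L\otimes_J\varphi_t^*TX$ be parallel transport along it with respect to $\nabla^{A,J}$ (acting as the identity on the $L$-factor). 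Conjugating a Cauchy--Riemann operator by a bundle isomorphism again yields a Cauchy--Riemann operator (a quick check with the Leibniz rule), so $P_t^{-1}\mD_{\varphi_t}^{A,J}P_t$ is, for every $t$, a Cauchy--Riemann operator on the \emph{fixed} bundle $L\otimes_J\varphi_0^*TX$; consequently $P_t^{-1}\mD_{\varphi_t}^{A,J}P_t-\mD_\varphi^{A,J}$ is of zeroth order, since two Cauchy--Riemann operators on one bundle differ by an $\mathrm{End}$-valued $(0,1)$-form (cf.\ the discussion preceding Prp.~\ref{prpRiemannRoch}). Unwinding the definitions shows $\nabla_\Xi^{A,J}(\mD_\varphi^{A,J})=\partial_t|_{t=0}(P_t^{-1}\mD_{\varphi_t}^{A,J}P_t)$, a limit of zeroth-order operators, hence itself of zeroth order; evaluating at the fixed section $\psi$ produces $T$, and a glance at the local formula (\ref{eqnLocalComplexConnection}) shows its coefficients involve $dJ$, $dA$ (of class $C^l$), $d\varphi$ (of class $W^{l,p}$) and $\psi$ (of class $W^{l+1,p}$), so $T$ is of class $W^{l,p}$ by the product estimate (\ref{eqnProductSobolev}). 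Writing $\hat T\in W^{l,p}(\Sigma,\Lambda^{0,1}\Sigma\otimes_\bR\mathrm{End}_{\bR}(E))$ for the extension of $T$ by zero, we obtain $\hat{\mD}_{\varphi,\psi}^{A,J}=D_\varphi\oplus\mD_\varphi^{A,J}+\hat T$; adding such a term leaves the operator a real linear Cauchy--Riemann operator of class $W^{l,p}$, which settles the first assertion.

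For the index I would apply Prp.~\ref{prpRiemannRoch} to $E$, which has complex rank $2n$ and satisfies $c_1(E)=c_1(\varphi^*TX)+c_1(L\otimes_J\varphi^*TX)$; using $\scal{c_1(\varphi^*TX)}{[\Sigma]}=\scal{c_1(TX)}{\beta}=c_1(\beta)$ and the computation in the proof of Lem.~\ref{lemComplexCauchyRiemann} this gives
\begin{align*}
\mathrm{ind}_{\bR}\hat{\mD}_{\varphi,\psi}^{A,J}
&=2n(2-2g)+2\scal{c_1(\varphi^*TX)}{[\Sigma]}+2\scal{c_1(L\otimes_J\varphi^*TX)}{[\Sigma]}\\
&=n(4-4g)+4c_1(\beta)+2n\deg L=\mathrm{ind}_{\bR}D_\varphi+\mathrm{ind}_{\bR}\mD_\varphi^{A,J}
\end{align*}
(equivalently, $\hat T$ factors through the compact inclusion $W^{k,p}\hookrightarrow W^{k-1,p}$, hence is a compact operator and does not change the Fredholm index, so the index is that of $D_\varphi\oplus\mD_\varphi^{A,J}$). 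Here $\mathrm{ind}_{\bR}D_\varphi=n(2-2g)+2c_1(\beta)$ is (\ref{eqnDimensionClassicalModuliSpace}) and $\mathrm{ind}_{\bR}\mD_\varphi^{A,J}=n(2-2g)+2c_1(\beta)+2n\deg L$ is Lem.~\ref{lemComplexCauchyRiemann}. The one genuinely delicate step is the zeroth-order claim for $T$: a naive local differentiation of the $d\varphi$-dependent coefficients of $\mD_\varphi^{A,J}$ produces terms in $\nabla\Xi$, and it is precisely the parallel-transport contributions built into the covariant derivative $\nabla_\Xi^{A,J}(\mD_\varphi^{A,J})$ --- made transparent by the conjugation $P_t^{-1}(\cdot)P_t$ above --- that cancel them; everything else is routine bookkeeping with Sobolev product estimates and the Riemann--Roch formula.
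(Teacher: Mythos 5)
Your proposal is correct and follows essentially the same route as the paper: decompose $\hat{\mD}_{\varphi,\psi}^{A,J}$ as the Cauchy--Riemann operator $D_\varphi\oplus\mD_\varphi^{A,J}$ on $E=\varphi^*TX\oplus(L\otimes_J\varphi^*TX)$ plus the zeroth-order, $W^{l,p}$-coefficient term $\Xi\mapsto\nabla_\Xi^{A,J}\bigl(\mD_\varphi^{A,J}\bigr)\psi$, and then apply Prp.~\ref{prpRiemannRoch} to $E$ so that the index is the sum of (\ref{eqnDimensionClassicalModuliSpace}) and the index from Lem.~\ref{lemComplexCauchyRiemann}. The only caveat is that your conjugation argument literally shows that $\partial_t|_{t=0}\bigl(P_t^{-1}\mD_{\varphi_t}^{A,J}P_t\bigr)$ is zeroth order in the section it acts on, whereas what the Leibniz rule for $\hat{\mD}_{\varphi,\psi}^{A,J}$ requires is $\bR$-function-linearity in $\Xi$; this is indeed true (the parallel-transport corrections cancel the $\nabla\Xi$-terms, leaving essentially a curvature contraction), you flag it explicitly, and the paper itself asserts this tensoriality with no more justification than you give.
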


\begin{proof}
Since $D_{\varphi}$ and $\mD_{\varphi}^{A,J}$ are defined on different spaces, it is clear that
their sum $D_{\varphi}+\mD_{\varphi}^{A,J}$ is a complex linear Cauchy-Riemann operator of class $W^{l,p}$
since, by the properties of $D_{\varphi}$ and Lem. \ref{lemComplexCauchyRiemann},
the same applies to each summand individually.
Moreover, the map $\Xi\mapsto\nabla_{\Xi}^{A,J}\left(\mD_{\varphi}^{A,J}\right)\psi$ is of class $W^{l,p}$ and
linear for $\bR$-valued functions, and thus it follows that $\hat{\mD}_{\varphi,\psi}^{A,J}$ is a real linear
Cauchy-Riemann operator of regularity class $W^{l,p}$. By Prp. \ref{prpRiemannRoch} applied to
$E:=\varphi^*TX\oplus(L\otimes_J\varphi^*TX)$, its Fredholm index coincides
with the Fredholm index of $D_{\varphi}+\mD_{\varphi}^{A,J}$, which is the sum of the individual indices,
given by (\ref{eqnDimensionClassicalModuliSpace}) and Lem \ref{lemComplexCauchyRiemann}, respectively.
\end{proof}

\subsection{The Universal Moduli Space}

In this subsection, we introduce the universal moduli space of connections, almost complex structures and
holomorphic supercurves and prove that, in the Banach manifold setting, it is a separable manifold.
It will turn out to be essential in the proof of the second part of Thm. \ref{thmSuperregular}.
For $(A,J)\in\mA^{l+1}\times\mJ^{l+1}$, we denote by
\begin{align*}
&\hat{\mM}^*(\beta,\Sigma,L;A,J)\\
&\qquad\qquad:=\{(\varphi,\psi)\in \mM^*_{C^l}(\beta,\Sigma;J)
\times C^l_*(\Sigma,L\otimes_J\varphi^*TX)\setsep\mD^{A,J}_{\varphi}\psi=0\}
\end{align*}
the moduli space of (nontrivial) $(A,J)$-holomorphic supercurves of class $C^l$. Note that, by elliptic regularity as stated
in Prp. \ref{prpEllipticRegularity}, $\hat{\mM}^*(\beta,\Sigma,L;A,J)$ may be considered as a space of objects
of regularity $W^{k,p}$ for $k\leq l+1$ and $p>2$.

\begin{Def}
The \emph{universal moduli space} of holomorphic supercurves is defined as
\begin{align*}
&\hat{\mM}^*(\beta,\Sigma,L;\mA^{l+1},\mJ^{l+1})\\
&\qquad:=\{(\varphi,\psi,A,J)\setsep A\in\mA^{l+1},\,J\in\mJ^{l+1},\,
(\varphi,\psi)\in\hat{\mM}^*(\beta,\Sigma,L;A,J)\}
\end{align*}
\end{Def}

For the proof of Prp. \ref{prpUniversalModuliSpace} below, stating that the universal moduli space
is a Banach manifold, we need the bundles to be introduced in the following
two lemmas. We refer to \cite{Lan99} for details on Banach space bundles. Let
\begin{align*}
\mB^{k,p}_{\beta}:=\{\varphi\in W^{k,p}(\Sigma,X)\setsep [\varphi]=\beta\}
\end{align*}
denote the space of all $W^{k,p}$-maps $\varphi:\Sigma\rightarrow X$ such that $[\varphi]=\beta\in H_2(X,\bZ)$,
which is well-defined provided that $kp>2$ (cf. the discussion in Sec. \ref{secProperties}).
The space $\mB^{k,p}_{\beta}$ is a component of $W^{k,p}(\Sigma,X)$ and, as such,
a smooth separable Banach manifold with tangent space $T_{\varphi}\mB^{k,p}_{\beta}=W^{k,p}(\Sigma,\varphi^*TX)$
as shown in \cite{Eli67}.

\begin{Lem}
\label{lemGBanachBundle}
Let $1\leq k\leq l$ be natural numbers, $p>2$ and $\beta\in H_2(X,\bZ)$. Then, prescribing
\begin{align*}
\mG^{k,p}_{(\varphi,A,J)}:=W^{k,p}(\Sigma,L\otimes_J\varphi^*TX)
\end{align*}
for $\varphi\in\mB^{k,p}_{\beta}$, $A\in\mA^{l+1}$ and $J\in\mJ^{l+1}$, defines a separable
$C^{l-k}$-Banach space bundle $\mG^{k,p}\rightarrow\mB^{k,p}_{\beta}\times\mA^{l+1}\times\mJ^{l+1}$.
\end{Lem}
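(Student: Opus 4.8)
The plan is to construct the bundle by exhibiting local trivialisations and verifying the cocycle condition with the required regularity. First I would fix a reference point $(\varphi_0,A_0,J_0)\in\mB^{k,p}_\beta\times\mA^{l+1}\times\mJ^{l+1}$ and choose a neighbourhood $\mU$ of it together with a smooth family of ``comparison'' isomorphisms. The key observation is that $L\otimes_J\varphi^*TX$, as a \emph{smooth} complex vector bundle (ignoring the $W^{k,p}$-structure on sections for a moment), depends only continuously—indeed $C^{l-k}$-smoothly—on the data: the complex structure on $\varphi^*TX$ is $J\circ\varphi$, which lies in $W^{k,p}$ and hence in $C^{l-k}$ for $p>2$ and $k\le l$, and the identification of $\varphi^*TX$ with $\varphi_0^*TX$ for $\varphi$ near $\varphi_0$ can be arranged via parallel transport along short geodesics (with respect to a fixed background metric) from $\varphi_0(z)$ to $\varphi(z)$. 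Composing the parallel transport isomorphism $\varphi^*TX\to\varphi_0^*TX$ with a complex-linear part (the $(1,0)$-projection with respect to the two complex structures $J\circ\varphi$ and $J_0\circ\varphi_0$) yields a bundle isomorphism $\Theta_{(\varphi,A,J)}:L\otimes_J\varphi^*TX\to L\otimes_{J_0}\varphi_0^*TX$ covering the identity on $\Sigma$.

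Next I would push this forward to sections: the map $\psi\mapsto\Theta_{(\varphi,A,J)}\circ\psi$ defines a linear isomorphism $\mG^{k,p}_{(\varphi,A,J)}=W^{k,p}(\Sigma,L\otimes_J\varphi^*TX)\to W^{k,p}(\Sigma,L\otimes_{J_0}\varphi_0^*TX)$, because $\Theta$ is fibrewise linear with coefficients of regularity $W^{k,p}$ (equivalently $C^{l-k}$ by Morrey, using $k\le l$ and $p>2$), so it preserves the Sobolev class by the product estimate (\ref{eqnProductSobolev}) and the composition remark recalled in Sec.~\ref{secProperties}. Fixing a reference trivialisation of $W^{k,p}(\Sigma,L\otimes_{J_0}\varphi_0^*TX)$ (which is a fixed separable Banach space, separability coming from that of $\mB^{k,p}_\beta$ and standard Sobolev theory), these maps constitute the local trivialisations $\mU\times W^{k,p}(\Sigma,L\otimes_{J_0}\varphi_0^*TX)\xrightarrow{\sim}\mG^{k,p}|_{\mU}$. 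One then checks that the transition maps between two such trivialisations, given pointwise by $\Theta_{(\varphi,A,J)}\circ\Theta'^{-1}_{(\varphi,A,J)}$, depend on $(\varphi,A,J)$ of class $C^{l-k}$ as maps into the bounded operators; this is the verification that upgrades the naive fibrewise description to an honest $C^{l-k}$-Banach space bundle, and it reduces to differentiating the parallel-transport and projection formulae in the parameters, each differentiation costing one derivative of the data and hence staying within the allotted $l-k$ derivatives.

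I expect the main obstacle to be precisely this last regularity bookkeeping: one must confirm that the dependence of $\Theta_{(\varphi,A,J)}$ on $\varphi$ is genuinely $C^{l-k}$ and not merely continuous, which is delicate because $\varphi$ enters through $\varphi^*TX$ and through $J\circ\varphi$, and differentiating in $\varphi$ requires the chain rule for Sobolev maps together with the fact that $\mB^{k,p}_\beta$ is a smooth Banach manifold (as cited from \cite{Eli67}). The connection $A$ enters only through the choice of parallel transport if one uses the $A$-connection rather than a fixed background one; it is cleanest to use a fixed smooth background metric for the geodesics and parallel transport, so that the $A$-dependence of $\mG^{k,p}$ is in fact trivial, leaving only the $(\varphi,J)$-dependence to analyse. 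Finally I would note that separability of each fibre and of the base gives separability of the total space, and the cocycle condition $\Theta\circ\Theta'^{-1}\circ\Theta''\circ\Theta'^{-1}=\mathrm{id}$-type identities hold on the nose since all the $\Theta$'s are defined by the same geometric recipe relative to a common reference, completing the construction.
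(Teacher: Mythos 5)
Your construction is essentially the paper's: there the bundle is trivialised over product charts $U(\varphi)\times U(A)\times U(J)$ (with $U(\varphi)$ an exponential chart) by composing parallel transport along $t\mapsto\exp_{\varphi}(t\xi)$ — taken with respect to $\nabla^{A,J}$, which is complex linear, whereas you use a fixed background connection plus a complex projection, a cosmetic difference — with the identity in the $A$-direction and the projection $\psi\mapsto\frac{1}{2}(\psi-iJ'\circ\psi)$ in the $J$-direction, followed by the same $C^{l-k}$ bookkeeping for the transition maps and the same countable-atlas argument for separability. One small caveat: Morrey gives $W^{k,p}\hookrightarrow C^{k-1}$, not $C^{l-k}$; this does not harm your argument, since the relevant $C^{l-k}$ regularity is in the parameter $(\varphi,A,J)$ (each parameter derivative costing one derivative of the $C^{l+1}$ data $A,J$), which your final bookkeeping treats correctly.
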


\begin{proof}
Identify a neighbourhood $U(\varphi)$ of a smooth function $\varphi:\Sigma\rightarrow X$
with a neighbourhood of zero in $W^{k,p}(\Sigma,\varphi^*TX)$ via $\xi\mapsto\exp_{\varphi}(\xi)$,
where the exponential function is with respect to the Levi-Civita connection of any fixed smooth metric on $X$.
The bundle trivialisation over such a coordinate charts comes in three stages:
\begin{itemize}
\item Fix $A\in\mA^{l+1}$ and $J\in\mJ^{l+1}$. We trivialise $\mG^{k,p}$ over
$U(\varphi)\times\{A\}\times\{J\}$ via $\nabla^{A,J}$-parallel transport
\begin{align*}
\mP^{A,J}_{\varphi}(\xi):L\otimes_J\varphi^*TX\rightarrow L\otimes_J\exp_{\varphi}(\xi)^*TX
\end{align*}
Consider another such trivialisation over $U(\tvarphi)\times\{A\}\times\{J\}$ with
$\tvarphi=\exp_{\varphi}(\xi)\in U(\varphi)$. The corresponding transition map is then given by
$\xi\mapsto\mP^{A,J}_{\varphi}(\xi)\circ\xi$ which maps
\begin{align*}
W^{k,p}(\Sigma,L\otimes_J\varphi^*TX)\rightarrow W^{k,p}(\Sigma,L\otimes_J\tvarphi^*TX)
\end{align*}
Since $A$ and $J$ are of class $C^{l+1}$, parallel transport $\mP^{A,J}_{\varphi}(\tilde{\xi})$
is of class $C^{l}$ and, upon differentiating $l-k$ times, still maps a $W^{k,p}$-section to a
well-defined $W^{k,p}$-section. This shows that the induced transition maps, which we denote
\begin{align*}
\mP^{A,J}_{\varphi}(\xi):\mG^{k,p}_{(\varphi,A,J)}\rightarrow\mG^{k,p}_{(\exp_{\varphi}(\xi),A,J)}
\end{align*}
are of regularity class $C^{l-k}$.

\item Now we trivialise $\mG^{k,p}$ over neighbourhoods $\{\varphi\}\times U(A)\times\{J\}$
via the (smooth) isomorphisms
\begin{align*}
&I_{\varphi}^J(A,A'):\mG^{k,p}_{(\varphi,A,J)}\rightarrow\mG^{k,p}_{(\varphi,A',J)}\\
&I_{\varphi}^J(A,A'):=\id_{W^{k,p}(\Sigma,L\otimes_J\varphi^*TX)}:\psi\mapsto\psi
\end{align*}

\item Finally, we trivialise $\mG^{k,p}$ over neighbourhoods $\{\varphi\}\times\{A\}\times U(J)$
via the (smooth) isomorphisms
\begin{align*}
L_{\varphi}^A(J,J'):L\otimes_J\varphi^*TX\rightarrow L\otimes_{J'}\varphi^*TX
\end{align*}
defined by $\psi\mapsto\frac{1}{2}(\psi-iJ'\circ\psi)$
with $i$ the complex structure on $L$.
\end{itemize}
The entire bundle trivialisation is now composed of
\begin{align*}
&\mT_{(\varphi,A,J)}^{(\exp_{\varphi}(\xi),A',J')}\\
&\qquad:=\mP^{A',J'}_{\varphi}(\xi)\circ L_{\varphi}^{A'}(J,J')\circ I_{\varphi}^J(A,A'):
\mG^{k,p}_{(\varphi,A,J)}\rightarrow\mG^{k,p}_{(\exp_{\varphi}(\xi),A',J')}
\end{align*}
Thus we see that $\mG^{k,p}$ is a Banach space bundle of class $C^{l-k}$.
Since $\mB^{k,p}_{\beta}$, $\mA^{l+1}$ and $\mJ^{l+1}$ are all separable, one finds an atlas
of $\mB^{k,p}_{\beta}\times\mA^{l+1}\times\mJ^{l+1}$ with countably many charts
$U_{\alpha}=U(\varphi_{\alpha})\times U(A_{\alpha})\times U(J_{\alpha})$,
locally trivialising $\mG^{k,p}$ such that
$\mG^{k,p}|_{U_{\alpha}}\cong U_{\alpha}\times W^{k,p}(\Sigma,L\otimes_{J_{\alpha}}\varphi_{\alpha}^*TX)$.
Since also the spaces $W^{k,p}(\Sigma,L\otimes_{J_{\alpha}}\varphi_{\alpha}^*TX)$ are separable, every
$\mG^{k,p}|_{U_{\alpha}}$ is separable and, therefore, so is $\mG^{k,p}$ (being the countable union of
separable sets).
\end{proof}

\begin{Lem}
\label{lemEBanachBundle}
Let $1\leq k\leq l$ be natural numbers, $p>2$ and $\beta\in H_2(X,\bZ)$. Then, prescribing
\begin{align*}
\mE^{k-1,p}_{(\varphi,A,J)}&:=W^{k-1,p}(\Sigma,\Lambda^{0,1}\Sigma\otimes_J\varphi^*TX)\\
\hat{\mE}^{k-1,p}_{(\varphi,A,J)}&:=W^{k-1,p}(\Sigma,\Lambda^{0,1}\Sigma\otimes_{\bC}L\otimes_J\varphi^*TX)
\end{align*}
for $\varphi\in\mB^{k,p}_{\beta}$, $A\in\mA^{l+1}$ and $J\in\mJ^{l+1}$ defines separable
$C^{l-k}$-Banach space bundles
$\mE^{k-1,p},\,\hat{\mE}^{k-1,p}\rightarrow\mB^{k,p}_{\beta}\times\mA^{l+1}\times\mJ^{l+1}$.
\end{Lem}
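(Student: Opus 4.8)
The strategy is to mimic the proof of Lemma \ref{lemGBanachBundle} almost verbatim, since $\mE^{k-1,p}$ and $\hat{\mE}^{k-1,p}$ differ from $\mG^{k,p}$ only by tensoring with the fixed bundle $\Lambda^{0,1}\Sigma$ (which depends on the fixed complex structure $j$ on $\Sigma$, not on the varying data $(\varphi,A,J)$) and by lowering the Sobolev exponent from $W^{k,p}$ to $W^{k-1,p}$. First I would treat $\hat{\mE}^{k-1,p}$, which is the more substantial of the two; the argument for $\mE^{k-1,p}$ is the special case where the line-bundle factor $L$ is absent (equivalently, replace $L$ by the trivial bundle). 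I use the same local coordinate charts on the base $\mB^{k,p}_{\beta}\times\mA^{l+1}\times\mJ^{l+1}$, namely $U(\varphi)\times U(A)\times U(J)$ with $U(\varphi)$ identified with a neighbourhood of zero in $W^{k,p}(\Sigma,\varphi^*TX)$ via $\xi\mapsto\exp_{\varphi}(\xi)$.

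The trivialisation comes in the same three stages as in Lemma \ref{lemGBanachBundle}. Over $U(\varphi)\times\{A\}\times\{J\}$ I trivialise by $\nabla^{A,J}$-parallel transport tensored with the identity on $\Lambda^{0,1}\Sigma$, giving isomorphisms
\begin{align*}
\id_{\Lambda^{0,1}\Sigma}\otimes\mP^{A,J}_{\varphi}(\xi):\hat{\mE}^{k-1,p}_{(\varphi,A,J)}\rightarrow\hat{\mE}^{k-1,p}_{(\exp_{\varphi}(\xi),A,J)}
\end{align*}
Over $\{\varphi\}\times U(A)\times\{J\}$ the transition is again the identity $I_{\varphi}^J(A,A')$, since the fibre does not depend on $A$. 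Over $\{\varphi\}\times\{A\}\times U(J)$ I use $\id_{\Lambda^{0,1}\Sigma}\otimes L^A_{\varphi}(J,J')$ with $L^A_{\varphi}(J,J')$ as in Lemma \ref{lemGBanachBundle}. Composing the three stages gives the full transition maps $\mT^{(\exp_{\varphi}(\xi),A',J')}_{(\varphi,A,J)}$, and the key point to check is the regularity claim: since $A$ and $J$ are of class $C^{l+1}$, the parallel transport $\mP^{A',J'}_{\varphi}(\xi)$ is of class $C^{l}$, and differentiating $l-k$ times still carries a $W^{k-1,p}$-section to a well-defined $W^{k-1,p}$-section (here one uses $p>2$ and the product/composition estimates (\ref{eqnProductSobolev}) recalled in Sec. \ref{secProperties}, applied in the form $j\le k$ with, say, $j=k-1$). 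Hence the transition maps are of class $C^{l-k}$ and $\hat{\mE}^{k-1,p}$ is a $C^{l-k}$-Banach space bundle. Separability follows exactly as before: $\mB^{k,p}_{\beta}$, $\mA^{l+1}$, $\mJ^{l+1}$ and each model fibre $W^{k-1,p}(\Sigma,\Lambda^{0,1}\Sigma\otimes_{\bC}L\otimes_{J_{\alpha}}\varphi_{\alpha}^*TX)$ are separable, so one obtains a countable trivialising atlas and $\hat{\mE}^{k-1,p}$, being a countable union of separable sets, is separable. Finally, the statement for $\mE^{k-1,p}$ is obtained by deleting the $L\otimes_{\bC}$ factor throughout (using $\nabla^{A,J}$-parallel transport on $\varphi^*TX$ directly rather than on $L\otimes_J\varphi^*TX$).

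The only genuine subtlety — and the step I would write out with some care — is verifying that the $\nabla^{A,J}$-parallel transport, after the loss of one derivative to $W^{k-1,p}$, still induces a bounded isomorphism between the relevant $W^{k-1,p}$-section spaces of class $C^{l-k}$ in the base variable; everything else is formally identical to Lemma \ref{lemGBanachBundle}. Since we require $1\le k\le l$, we have $k-1\ge 0$, so for $k=1$ the fibres are merely $L^p$-spaces and the parallel transport induces an $L^p$-isomorphism (no derivative estimate needed beyond boundedness of the transport and its inverse, which holds because $A,J\in C^{l+1}$ with $l\ge 1$); for $k\ge 2$ one invokes (\ref{eqnProductSobolev}) as above. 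In all cases $p>2$ guarantees that the pullback bundle $\varphi^*TX$ and hence $\Lambda^{0,1}\Sigma\otimes_{\bC}L\otimes_J\varphi^*TX$ is of the stated regularity class (by the transition-map consideration already noted for $E=L\otimes_J\varphi^*TX$), so the fibre spaces are well-defined to begin with.
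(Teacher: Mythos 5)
Your proposal is correct and follows essentially the same route as the paper, which simply declares the construction ``completely analogous'' to Lem.~\ref{lemGBanachBundle} and only records the modified $U(J)$-trivialising isomorphism $\tL_{\varphi}^A(J,J'):\alpha\mapsto\frac{1}{2}(\alpha-iJ'\alpha)$ (with $i$ the complex structure on $\Lambda^{0,1}\Sigma\otimes_{\bC}L$), which coincides with your $\id_{\Lambda^{0,1}\Sigma}\otimes L^A_{\varphi}(J,J')$ under the canonical identification. The extra details you spell out (parallel transport tensored with the identity on the fixed factor $\Lambda^{0,1}\Sigma$, the $W^{k-1,p}$-regularity of the transition maps including the $k=1$ case, and separability) are consistent with, and a faithful expansion of, the paper's argument.
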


\begin{proof}
This is completely analogous to Lem. \ref{lemGBanachBundle}. Here, the $U(J)$-trivialising
isomorphism of $\hat{\mE}^{k-1,p}$ is the map
\begin{align*}
\tL_{\varphi}^A(J,J'):\Lambda^{0,1}\Sigma\otimes_{\bC}L\otimes_J\varphi^*TX\rightarrow
\Lambda^{0,1}\Sigma\otimes_{\bC}L\otimes_{J'}\varphi^*TX
\end{align*}
defined by $\alpha\mapsto\frac{1}{2}(\alpha-iJ'\alpha)$
with $i$ the complex structure on $\Lambda^{0,1}\Sigma\otimes_{\bC}L$.
\end{proof}

By the regularity Proposition \ref{prpEllipticRegularity}, every $(A,J)$-holomorphic supercurve
$(\varphi,\psi):(\Sigma,L)\rightarrow X$ of regularity class $W^{k,p}$ is automatically
of class $C^l$ if $k\leq l+1$ and $(A,J)\in C^{l+1}$.
Therefore, we may identify the universal moduli space of class $C^l$ with a subspace of $\mG^{k,p}$ as follows.
For $\psi\in\mG^{k,p}$, we abbreviate $\varphi:=\pi_{\mB}(\psi)$, $A:=\pi_{\mA}(\psi)$ and $J:=\pi_{\mJ}(\psi)$.
Then
\begin{align*}
\hat{\mM}^*(\beta,\Sigma,L;\mA^{l+1},\mJ^{l+1})
=\{\psi\in\mG^{k,p}\setminus\{0\}\setsep\dbar_J\varphi=0\,,\;\mD_{\varphi}^{A,J}\psi=0\}
\end{align*}
Moreover, defining a map $\mF:\mG^{k,p}\setminus\{0\}\rightarrow\mE^{k-1,p}\oplus\hat{\mE}^{k-1,p}$ by
\begin{align}
\label{eqnMF}
\mF(\psi):=\mF_1(\psi)+\mF_2(\psi):=\dbar_J\varphi+\mD_{\varphi}^{A,J}\psi
\end{align}
where $0\in\mG^{k,p}$ denotes the zero section, we may identify
\begin{align}
\label{eqnUniversalModuliSpace}
\hat{\mM}^*(\beta,\Sigma,L;\mA^{l+1},\mJ^{l+1})=\mF^{-1}(0)
\end{align}

We next calculate the vertical differential of $\mF$, that is the differential
of $\mF$ with respect to the above trivialisations of the bundles $\mG^{k,p}$, $\mE^{k-1,p}$ and $\hat{\mE}^{k-1,p}$.
We define $\mathrm{End}(TX,J,\omega)\subseteq\mathrm{End}(TX)$ to be the bundle of
endomorphisms $Y$ such that $YJ+JY=0$ and, if we consider $\omega$-compatible almost complex structures,
such that in addition $\scal[\omega]{Yv}{w}+\scal[\omega]{v}{Yw}=0$ holds for all tangent vectors $v$ and $w$.
These constraints are obtained by differentiating $J^2=-\id$ and the compatibility condition and, therefore,
the tangent space at $J$ is precisely the space of sections
\begin{align*}
T_J\mJ^{l+1}=C^{l+1}(X,\mathrm{End}(TX,J,\omega))
\end{align*}
Let $V\subseteq X$ be a sufficiently small open subset such that $TX|_V\cong\bR^{2n}$ is trivial
and, on $U:=\varphi^{-1}(V)\subseteq\Sigma$, there exists a nonvanishing local section
$\theta\in\Gamma(U,L)$. Then, restricted to $U$, any section $\psi$ of $L\otimes_J\varphi^*TX$ may be identified
with a section $\psi_{\theta}$ of $\varphi^*TX$ via $\psi=\theta\cdot\psi_{\theta}$ and, moreover,
we fix a trivialisation of $TX\cong GL(X)\times_{GL(2n,\bR)}\bR^{2n}$ induced by a section
$\os\in\Gamma(V,GL(X))$. Such a trivialisation determines the action of $\fg=gl(2n,\bR)$ on $TX|_V$.

\begin{Lem}
\label{lemUniversalVerticalDifferential}
Let $l\geq 2$, $p>2$ and $k\in\{1,\ldots,l\}$. Then $\mF$ is a function of class $C^{l-k}$.
Moreover, the vertical differential is the map
\begin{align*}
&D_{(\varphi,\psi,A,J)}(\mF_1+\mF_2):
W^{k,p}(\Sigma,\varphi^*TX)\oplus W^{k,p}(\Sigma,L\otimes_J\varphi^*TX)\\
&\qquad\qquad\qquad\qquad\quad\oplus \Omega^1_{C^{l+1}}(\Sigma,GL(X)\times_{Ad}\fg)\oplus C^{l+1}(X,\mathrm{End}(TX,J,\omega))\\
&\qquad\rightarrow W^{k-1,p}(\Sigma,\Lambda^{0,1}\Sigma\otimes_J\varphi^*TX)\oplus
W^{k-1,p}(\Sigma,\Lambda^{0,1}\Sigma\otimes_{\bC}L\otimes_J\varphi^*TX)
\end{align*}
defined by
\begin{align*}
&D_{(\varphi,\psi,A,J)}\mF_1[\Xi+\Psi+Q+Y]=D_{\varphi}\Xi+\frac{1}{2}Y(\varphi)\circ d\varphi\circ j\\
&D_{(\varphi,\psi,A,J)}\mF_2[\Xi+\Psi+Q+Y]\\
&\qquad\qquad\qquad=\mD_{\varphi}^{A,J}\Psi+\nabla^{A,J}_{\Xi}\left(\mD^{A,J}_{\varphi}\right)\psi
+\left((Q^J\circ d\os)^{0,1}\circ d\varphi\right)\psi\\
&\qquad\qquad\qquad\qquad+\frac{1}{2}JY\circ\mD_{\varphi}^{A,J}\psi
-\frac{1}{2}\mD_{\varphi}^{A,J}\circ JY\psi+D_Y(\varphi,\psi,A,J)[Y]
\end{align*}
where $D_{\varphi}$ denotes the linearisation of $\dbar_J$ as in (\ref{eqnLinearisedDbar}) and
\begin{align*}
&D_Y(\varphi,\psi,A,J)[Y]\\
&\qquad\qquad:=\frac{1}{4}\left(-Y\nabla^A(J\psi_{\theta})-J\nabla^A(Y\psi_{\theta})
+Y\nabla_{j[\cdot]}^A\psi_{\theta}+\nabla_{j[\cdot]}^A(Y\psi_{\theta})\right)\cdot\theta
\end{align*}
The expressions stated are independent of the choices of $\os$ and $\theta$.
\end{Lem}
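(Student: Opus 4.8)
The plan is to compute the vertical differential by differentiating the two components $\mF_1(\psi)=\dbar_{J}\varphi$ and $\mF_2(\psi)=\mD_{\varphi}^{A,J}\psi$ in the four directions $\Xi\in W^{k,p}(\Sigma,\varphi^*TX)$, $\Psi\in W^{k,p}(\Sigma,L\otimes_J\varphi^*TX)$, $Q\in\Omega^1_{C^{l+1}}(\Sigma,GL(X)\times_{Ad}\fg)$ and $Y\in C^{l+1}(X,\mathrm{End}(TX,J,\omega))$, using the trivialisations $\mT^{(\exp_{\varphi}(\xi),A',J')}_{(\varphi,A,J)}=\mP^{A',J'}_{\varphi}(\xi)\circ L^{A'}_{\varphi}(J,J')\circ I^J_{\varphi}(A,A')$ of $\mG^{k,p}$ and the analogous trivialisations of $\mE^{k-1,p}$, $\hat{\mE}^{k-1,p}$ from Lemmas~\ref{lemGBanachBundle} and~\ref{lemEBanachBundle}. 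First I would establish the regularity claim: since $A$ and $J$ are of class $C^{l+1}$ and the trivialising maps are of class $C^{l-k}$ (as shown in the cited lemmas), and since $\mF$ is built from $\dbar_J\varphi$ and the local formula~(\ref{eqnLocalComplexConnection}) for $(\nabla^{A,J})^{0,1}$, in which $\varphi$ and $J$ enter with one derivative, $\mF$ is a $C^{l-k}$ map between the relevant Banach bundles (here $l\geq 2$ guarantees $l-k\geq 0$ and, where differentiation is performed, $l-k\geq 1$ for $k\leq l-1$; the boundary case $k=l$ needs the usual remark that the statement is really about the operator, cf. the discussion after Prp.~\ref{prpRiemannRoch}).

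Next I would treat $\mF_1$. The $\Psi$- and $Q$-directions do not affect $\dbar_J\varphi$, so they contribute nothing. The $\Xi$-direction gives, by definition, the linearisation $D_\varphi\Xi$ of $\dbar_J$ as in~(\ref{eqnLinearisedDbar}). For the $Y$-direction I would differentiate $J\mapsto\frac12(d\varphi+J\circ d\varphi\circ j)$ at fixed $\varphi$ and fixed $j$, which immediately yields $\frac12 Y(\varphi)\circ d\varphi\circ j$; this is the only subtlety, and it is purely formal. For $\mF_2$ I would differentiate $\mD^{A,J}_\varphi\psi=(\nabla^{A,J}\psi_\theta)^{0,1}\cdot\theta+\psi_\theta\cdot(\dbar\theta)$ term by term. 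The $\Psi$-direction, with $\psi$ held and $\Psi$ transported parallelly, reproduces $\mD^{A,J}_\varphi\Psi$. The $\Xi$-direction produces $\nabla^{A,J}_\Xi(\mD^{A,J}_\varphi)\psi$: here I would use that under $\nabla^{A,J}$-parallel transport along $\exp_\varphi(t\Xi)$ the derivative of $\mD^{A,J}_\varphi\psi$ picks up exactly the commutator term $\nabla^{A,J}_\Xi(\mD^{A,J}_\varphi\psi)-\mD^{A,J}_\varphi\nabla^{A,J}_\Xi\psi$, which is the definition adopted in~(\ref{eqnVerticalDifferential}); one must also note that the $\Xi$-variation of the base point changes $\varphi^*TX$, so the extra $\varphi$-dependence of $\psi_\theta$ and the connection potential must be accounted for, and this is what makes the curvature-type commutator appear rather than a naive derivative. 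The $Q$-direction only enters through $A\circ d\os$ in~(\ref{eqnLocalComplexConnection}) and, via $C$, in $\mD^{A,J}_\varphi$; differentiating $A\mapsto A+tQ$ and taking the $(0,1)$-part gives $((Q^J\circ d\os)^{0,1}\circ d\varphi)\psi$, where $Q^J$ denotes the $J$-adapted piece entering $C$, and I would check independence of $\os$ exactly as~(\ref{eqnLocalComplexConnection}) is checked to be well-defined.

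The main obstacle is the $Y$-direction of $\mF_2$, which is genuinely computational. Here the trivialisation $L^{A'}_\varphi(J,J'):\psi\mapsto\frac12(\psi-iJ'\circ\psi)$ of $\mG^{k,p}$ and the corresponding $\tL^A_\varphi(J,J')$ on $\hat{\mE}^{k-1,p}$ both vary with $J$, so differentiating $J\mapsto J+tY$ produces \emph{three} kinds of terms: (i) the $\frac12 JY\circ\mD^{A,J}_\varphi\psi$ and $-\frac12\mD^{A,J}_\varphi\circ JY\psi$ terms coming from the $\frac12(\,\cdot\,-iJ'\circ\,\cdot\,)$ factors on target and source respectively (using $i$ acting on $L\otimes_J\varphi^*TX$ is $J$, so $\dot{(iJ')}=Y$ up to the $J$-factor), and (ii) the genuine variation of the operator $\mD^{A,J}_\varphi$ itself, i.e. the variation of $C=(A\circ d\os)-J\cdot(A\circ d\os)\cdot J-J\cdot dJ$ through the three occurrences of $J$, which after applying to $\partial_s$, taking the $(0,1)$-part, and bookkeeping the $\nabla^A$ versus $\nabla^A_{j[\cdot]}$ pieces, collapses precisely to the stated
\begin{align*}
D_Y(\varphi,\psi,A,J)[Y]
=\tfrac14\bigl(-Y\nabla^A(J\psi_\theta)-J\nabla^A(Y\psi_\theta)
+Y\nabla^A_{j[\cdot]}\psi_\theta+\nabla^A_{j[\cdot]}(Y\psi_\theta)\bigr)\cdot\theta .
\end{align*}
I would carry this out in the local frame fixed before the lemma, using $YJ+JY=0$ to simplify, and finally verify that the result is independent of $\os$ and of the holomorphic section $\theta$ — the $\theta$-independence following because a change $\theta\mapsto f\theta$ with $f$ holomorphic multiplies $\psi_\theta$ by $f^{-1}$ and the $\dbar\theta$ term absorbs the $\dbar f^{-1}$ contribution exactly as in the well-definedness check for~(\ref{eqnDOperator}).
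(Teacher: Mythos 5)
Your proposal follows essentially the same route as the paper: trivialise $\mG^{k,p}$, $\mE^{k-1,p}$, $\hat{\mE}^{k-1,p}$ as in Lemmas \ref{lemGBanachBundle} and \ref{lemEBanachBundle}, get the regularity of $\mF$ from the regularity of the trivialising maps and the local formula (\ref{eqnLocalComplexConnection}), and differentiate $\mF_1$, $\mF_2$ separately in the $\Xi$-, $\Psi$-, $Q$- and $Y$-directions, with the $\Xi$-direction handled by parallel transport (producing the commutator $\nabla^{A,J}_{\Xi}(\mD^{A,J}_{\varphi})\psi$), the $Q$-direction read off from (\ref{eqnLocalComplexConnection}), and the $Y$-direction split via the product rule into the two trivialisation terms $\frac{1}{2}JY\circ\mD_{\varphi}^{A,J}\psi-\frac{1}{2}\mD_{\varphi}^{A,J}\circ JY\psi$ plus the genuine variation of the operator giving $D_Y$. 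This matches the paper's proof; the only cosmetic difference is that you propose computing the operator variation through the local potential $C$ rather than, as the paper does, differentiating $(\nabla^{A,J'}\psi_{\theta})^{0,1}$ directly in $J'$, which is the same calculation in local frame coordinates.
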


\begin{proof}
As for $\mF_1$, the statement follows as in Chp. 3 of \cite{MS04}. Therefore, it remains
to consider $\mF_2$. Let $\mU:=U(\varphi)\times U(A)\times U(J)$ be a (sufficiently small) neighbourhood of
$(\varphi,A,J)\in\mB^{k,p}_{\beta}\times\mA^{l+1}\times\mJ^{l+1}$ such that
\begin{align*}
\mG^{k,p}|_{\mU}\cong U(\varphi)\times U(A)\times U(J)\times W^{k,p}(\Sigma,L\otimes_J\varphi^*TX)
\end{align*}
is trivialised under the fibre isomorphisms $\mT_{(\varphi,A,J)}^{(\varphi',A',J')}$,
which are stated explicitely in the proof of Lem. \ref{lemGBanachBundle}.
As mentioned above, the identification of $U(\varphi)$ with a neighbourhood of zero in
$W^{k,p}(\Sigma,\varphi^*TX)$ is given by identifying $\xi$ with $\varphi'=\exp_{\varphi}(\xi)$.
Similarly, we choose $\mU$ sufficiently small such that $\hat{\mE}^{k-1,p}$ is trivialised
via fibre isomorphisms $\tilde{\mT}_{(\varphi,A,J)}^{(\varphi',A',J')}$ which are analogous
to $\mT_{(\varphi,A,J)}^{(\varphi',A',J')}$ (with $L_{\varphi}^A(J,J')$ replaced by
$\tL_{\varphi}^A(J,J')$ as in the proof of Lem. \ref{lemEBanachBundle}).
With respect to these trivialisations, $\mF_2$ becomes the map
\begin{align*}
\mF_2:U(\varphi)&\times W^{k,p}(\Sigma,L\otimes_J\varphi^*TX)\times U(A)\times U(J)\\
&\rightarrow W^{k-1,p}(\Sigma,\Lambda^{0,1}\Sigma\otimes_{\bC}L\otimes_J\varphi^*TX)\\
(\varphi',\psi',A',J')&\mapsto\left(\tilde{\mT}_{(\varphi,A,J)}^{(\varphi',A',J')}\right)^{-1}
\circ\mD_{\varphi'}^{A',J'}\circ\mT_{(\varphi,A,J)}^{(\varphi',A',J')}\circ\psi'
\end{align*}
$\mT$ and $\tilde{\mT}^{-1}$ are known to be of class $C^{l-k}$ and,
by the local formula in (\ref{eqnLocalComplexConnection}), $\mD_{\varphi'}^{A',J'}$
is of class $C^l$ if $(A',J')$ are of class $C^{l+1}$. Therefore,
we see that $\mF_2$ is a $C^{l-k}$ map, and
it remains to calculate the differential of $\mF_2$ in these trivialisations.

We calculate the differential with respect to $\varphi$ or, equivalently, with respect to $\xi$, at $\xi=0$.
Let $\Xi\in W^{k,p}(\Sigma,\varphi^*TX)$. Then
\begin{align*}
D_{(\varphi,\psi,A,J)}\mF_2[\Xi]
&=D_{(\xi=0,\psi,A,J)}\left(\left(\mP_{\varphi}^{A,J}(\xi)\right)^{-1}\circ
\mD^{A,J}_{\exp_{\varphi}\xi}\circ\mP_{\varphi}^{A,J}(\xi)\circ\psi\right)[\Xi]\\
&=\frac{d}{dt}|_0\left(\left(\mP_{\varphi}^{A,J}(t\Xi)\right)^{-1}\circ
\mD^{A,J}_{\exp_{\varphi}t\Xi}\circ\mP_{\varphi}^{A,J}(t\Xi)\right)\circ\psi
\end{align*}
Here, $\mP_{\varphi}^{A,J}(t\Xi)$ denotes parallel transport along the path $t\mapsto\exp_{\varphi}(t\Xi)$,
and thus we calculate the differential as follows.
\begin{align*}
D_{(\varphi,\psi,A,J)}\mF_2[\Xi]&=\nabla^{A,J}_{\frac{d}{dt}|_0(\exp_{\varphi}(t\Xi))}
\left(\mD^{A,J}_{\exp_{\varphi}t\Xi}\right)\circ\psi\\
&=\nabla^{A,J}_{d_0\exp_{\varphi}[\Xi]}\left(\mD^{A,J}_{\varphi}\right)\psi
=\nabla^{A,J}_{\Xi}\left(\mD^{A,J}_{\varphi}\right)\psi
\end{align*}
For $\Psi\in W^{k,p}(\Sigma,L\otimes_J\varphi^*TX)$, we similarly obtain
\begin{align*}
D_{(\varphi,\psi,A,J)}\mF_2[\Psi]
=D_{(\varphi,\psi,A,J)}\left(\mD_{\varphi}^{A,J}\psi\right)[\Psi]
=\mD_{\varphi}^{A,J}\Psi
\end{align*}
which is the differential with respect to $\psi$.
Next, we calculate the differential with respect to $A$.
Let $Q\in T_A\mA=\Omega^1_{C^{l+1}}(\Sigma,GL(X)\times_{Ad}\fg)$. Then
\begin{align*}
D_{(\varphi,\psi,A,J)}\mF_2[Q]
&=D_{(\varphi,\psi,A,J)}\left(\mD_{\varphi}^{A,J}\psi\right)[Q]\\
&=D_{(\varphi,\psi,A,J)}\left(\left(\nabla^{A,J}\psi_{\theta}\right)^{0,1}\cdot\theta
+\psi_{\theta}\cdot(\dbar\theta)\right)[Q]\\
&=D_{(\varphi,\psi,A,J)}\left(\left(\nabla^{A,J}\psi_{\theta}\right)^{0,1}\right)[Q]\cdot\theta\\
&=\left(D_{(\varphi,\psi,A,J)}\left(\nabla^{A,J}\psi_{\theta}\right)[Q]\right)^{0,1}\cdot\theta\\
\end{align*}
As in (\ref{eqnLocalComplexConnection}), we (locally) identify
$\psi_{\theta}=[s,v]$ with $s:=\os\circ\varphi$ and $v\in W^{k,p}(U,\bR^{2n})$
and prescribe $Q^J:=\frac{1}{2}(Q-J\cdot Q\cdot J)$, where
$J$ is identified with a matrix-valued function. Then
\begin{align*}
&D_{(\varphi,\psi,A,J)}\mF_2[Q]\\
&\qquad=\left(D_{(\varphi,\psi,A,J)}\left(\left[s\,,\;dv[\cdot]
+(C\circ d\varphi)\cdot v\right]\right)[Q]\right)^{0,1}\cdot\theta\\
&\qquad=\left(D_{(\varphi,\psi,A,J)}\left(\left[s\,,\;
\left(\frac{1}{2}(A-J\cdot A\cdot J)\circ ds[\cdot]\right)\cdot v\right]\right)[Q]\right)^{0,1}\cdot\theta\\
&\qquad=\left(\left[s\,,\;\left(Q^J\circ ds[\cdot]\right)v\right]\right)^{0,1}\cdot\theta\\
&\qquad=\left[s\,,\;\left((Q^J\circ d\os)^{0,1}\circ d\varphi[\cdot]\right)v\right]\cdot\theta\\
&\qquad=:\left((Q^J\circ d\os)^{0,1}\circ d\varphi\right)\psi
\end{align*}
We finally calculate the differential with respect to $J$.
Let $Y\in T_J\mJ^{l+1}$.
By the product rule (using that all three terms occurring are linear),
\begin{align*}
&D_{(\varphi,\psi,A,J)}\mF_2[Y]\\
&\qquad=D_{J'=J}\left(\tL_{\varphi}^A(J,J')^{-1}\circ\mD_{\varphi}^{A,J'}
\circ L_{\varphi}^A(J,J')\circ\psi\right)[Y]\\
&\qquad=D_{J'=J}\left(\tL_{\varphi}^A(J,J')^{-1}\right)[Y]
\circ\left(\mD_{\varphi}^{A,J'}\circ L_{\varphi}^A(J,J')\circ\psi\right)|_{J'=J}\\
&\qquad\quad+\left(\tL_{\varphi}^A(J,J')^{-1}\right)|_{J'=J}\circ
D_{J'=J}\left(\mD_{\varphi}^{A,J'}\right)[Y]\circ\left(L_{\varphi}^A(J,J')\circ\psi\right)|_{J'=J}\\
&\qquad\quad+\left(\tL_{\varphi}^A(J,J')^{-1}\right)|_{J'=J}\circ
\left(\mD_{\varphi}^{A,J'}\right)|_{J'=J}\circ D_{J'=J}\left(L_{\varphi}^A(J,J')\circ\psi\right)[Y]\\
&\qquad=D_{J'=J}\left(\tL_{\varphi}^A(J,J')^{-1}\right)[Y]\circ\mD_{\varphi}^{A,J}\circ\psi
+D_{J'=J}\left(\mD_{\varphi}^{A,J'}\right)[Y]\circ\psi\\
&\qquad\qquad+\mD_{\varphi}^{A,J}\circ D_{J'=J}\left(L_{\varphi}^A(J,J')\right)[Y]\circ\psi
\end{align*}
holds. Abbreviating $L:=\tL_{\varphi}^A(J,J')$, we yield
\begin{align*}
0&=D_{J'=J}(L^{-1}\circ L)[Y]\\
&=D_{J'=J}(L^{-1})[Y]\circ L|_{J'=J}+L^{-1}|_{J'=J}\circ D_{J'=J}(L)[Y]\\
&=D_{J'=J}(L^{-1})[Y]+D_{J'=J}(L)[Y]
\end{align*}
and thus
\begin{align*}
&D_{(\varphi,\psi,A,J)}\mF_2[Y]\\
&\qquad=-D_{J'=J}\left(\tL_{\varphi}^A(J,J')\right)[Y]\circ\mD_{\varphi}^{A,J}\circ\psi
+D_{J'=J}\left(\mD_{\varphi}^{A,J'}\right)[Y]\circ\psi\\
&\qquad\qquad+\mD_{\varphi}^{A,J}\circ D_{J'=J}\left(L_{\varphi}^A(J,J')\right)[Y]\circ\psi\\
&\qquad=\frac{1}{2}JY\circ\mD_{\varphi}^{A,J}\psi+D_{J'=J}\left(\mD_{\varphi}^{A,J'}\psi\right)[Y]
-\frac{1}{2}\mD_{\varphi}^{A,J}\circ JY\psi
\end{align*}
where we used the explicit formulas for $L_{\varphi}^A$ and $\tL_{\varphi}^A$ as stated above and
that (on the complex tensor product) $i=J$ holds.
Moreover, we calculate
\begin{align*}
&D_{J'=J}\left(\mD_{\varphi}^{A,J'}\psi\right)[Y]\\
&\qquad=D_{J'=J}\left(\left(\nabla^{A,J'}\psi_{\theta}\right)^{0,1}\right)[Y]\cdot\theta\\
&\qquad=\frac{1}{4}D_{J'=J}\left(-J'\nabla^A(J'\psi_{\theta})
+J'\nabla_{j[\cdot]}^A\psi_{\theta}+\nabla_{j[\cdot]}^A(J'\psi_{\theta})\right)[Y]\cdot\theta\\
&\qquad=\frac{1}{4}\left(-Y\nabla^A(J\psi_{\theta})-J\nabla^A(Y\psi_{\theta})
+Y\nabla_{j[\cdot]}^A\psi_{\theta}+\nabla_{j[\cdot]}^A(Y\psi_{\theta})\right)\cdot\theta\\
&\qquad=:D_Y(\varphi,\psi,A,J)[Y]
\end{align*}
which concludes the proof.
\end{proof}

\begin{Prp}
\label{prpUniversalModuliSpace}
Let $\beta\in H_2(X,\bZ)$, $l\geq 2$, $p>2$ and $k\in\{1,\ldots,l\}$. Then the universal
moduli space $\hat{\mM}^*(\beta,\Sigma,L;\mA^{l+1},\mJ^{l+1})$ is a separable $C^{l-k}$-Banach submanifold
of $\mG^{k,p}$.
\end{Prp}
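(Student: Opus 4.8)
The plan is to realise the universal moduli space as the zero set of the section $\mF$ from \eqref{eqnMF} and to apply the infinite-dimensional implicit function theorem: since $\hat{\mM}^*(\beta,\Sigma,L;\mA^{l+1},\mJ^{l+1})=\mF^{-1}(0)$ by \eqref{eqnUniversalModuliSpace}, it suffices to show that $\mF$ is a $C^{l-k}$-map (already established in Lem. \ref{lemUniversalVerticalDifferential}) whose vertical differential $D_{(\varphi,\psi,A,J)}\mF$ is surjective with complemented kernel at every point of $\mF^{-1}(0)$. Separability and the manifold charts then come for free from the ambient Banach bundle structure of $\mG^{k,p}$ established in Lem. \ref{lemGBanachBundle}, once we know the zero set is a submanifold; concretely, the tangent space at a point will be the kernel of $D_{(\varphi,\psi,A,J)}\mF$, which splits because $D_{(\varphi,\psi,A,J)}\mF$ restricted to the $(\Xi,\Psi)$-directions is already Fredholm by Lem. \ref{lemCROperatorFredholmSum}.

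First I would fix $(\varphi,\psi,A,J)\in\mF^{-1}(0)$ and observe that, by Lem. \ref{lemCROperatorFredholmSum}, the partial operator $(\Xi,\Psi)\mapsto\hat{\mD}_{\varphi,\psi}^{A,J}[\Xi+\Psi]=D_{(\varphi,\psi,A,J)}(\mF_1+\mF_2)[\Xi+\Psi+0+0]$ is a real linear Cauchy-Riemann operator, hence Fredholm, hence has closed range of finite codimension. Therefore the full vertical differential has closed range, and to prove surjectivity it is enough to show that its image is dense, or equivalently that its $L^2$-annihilator is trivial. So I would take a pair $(\eta,\zeta)\in L^q(\Sigma,\Lambda^{0,1}\Sigma\otimes_J\varphi^*TX)\oplus L^q(\Sigma,\Lambda^{0,1}\Sigma\otimes_{\bC}L\otimes_J\varphi^*TX)$, with $1/p+1/q=1$, that annihilates the entire image of $D_{(\varphi,\psi,A,J)}\mF$, and aim to conclude $\eta=0$ and $\zeta=0$. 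Pairing against the $\Xi$- and $\Psi$-directions and invoking Prp. \ref{prpRiemannRoch}(ii) shows that $\eta$ and $\zeta$ are automatically of class $W^{k,p}$ and satisfy the adjoint equations $D_\varphi^*\eta=0$ (modulo the coupling term) and $(\mD_\varphi^{A,J})^*\zeta=0$; in particular both are smooth enough for the rest of the argument and, by the Carleman-type unique continuation packaged in Lem. \ref{lemCROperatorZeroes} applied to the formal adjoint (itself a Cauchy-Riemann operator), each of $\eta$, $\zeta$ either vanishes identically or has only isolated zeros.

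The crux is then to exploit the freedom in the $Q$- and $Y$-directions to force $\zeta\equiv 0$, and this is exactly the step that the paper's own commentary (``it does not suffice to vary only $J$'') flags as the reason the connection parameter $A$ was introduced. Here $\psi\not\equiv 0$ is essential: by Lem. \ref{lemCROperatorZeroes} the zero set of $\psi$ is finite, so on the open dense complement $\Sigma\setminus\psi^{-1}(0)$ one can choose $Q\in\Omega^1_{C^{l+1}}(\Sigma,GL(X)\times_{Ad}\fg)$ supported near any chosen point $z_0$ with $\psi(z_0)\neq 0$ and arrange, via the explicit formula $D_{(\varphi,\psi,A,J)}\mF_2[Q]=\bigl((Q^J\circ d\os)^{0,1}\circ d\varphi\bigr)\psi$, that this term runs over a dense subspace of the fibre of $\Lambda^{0,1}\Sigma\otimes_{\bC}L\otimes_J\varphi^*TX$ at $z_0$ (using that $\varphi$ is an immersion away from finitely many points, as $\varphi$ is simple, so $d\varphi$ has rank two there, and that the map $\fg\to\mathrm{End}(\bR^{2n})$, $A\mapsto A^J$, followed by contraction with $d\varphi$ and projection to the $(0,1)$-part hits all of $\Hom_{\bC}((T^{1,0}\Sigma),\varphi^*T^{1,0}X)$). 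Since $\mF_1$ does not depend on $A$, testing the annihilation condition against such $Q$ gives $\langle\zeta,(Q^J\circ d\os)^{0,1}\circ d\varphi\cdot\psi\rangle=0$ pointwise near $z_0$, forcing $\zeta(z_0)=0$; as $z_0$ ranges over a dense open set and $\zeta$ is continuous, $\zeta\equiv 0$. With $\zeta=0$ in hand, the remaining annihilation condition reduces to the classical one $\langle\eta,D_\varphi\Xi+\tfrac12 Y(\varphi)\circ d\varphi\circ j\rangle=0$ for all $\Xi$ and all $Y\in C^{l+1}(X,\mathrm{End}(TX,J,\omega))$, which is precisely the surjectivity statement underlying the classical transversality theorem for simple $J$-holomorphic curves (cf. Chp. 3 of \cite{MS04}); invoking that argument gives $\eta\equiv 0$. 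Hence $D_{(\varphi,\psi,A,J)}\mF$ is surjective, its kernel is complemented (the Fredholm subspace splits off and the $(Q,Y)$-directions form a closed complement to a closed subspace), and the implicit function theorem for $C^{l-k}$ Banach bundle sections yields that $\mF^{-1}(0)$ is a $C^{l-k}$-submanifold of $\mG^{k,p}$; separability is inherited from $\mG^{k,p}$ via Lem. \ref{lemGBanachBundle}. The main obstacle is the $\zeta$-killing step: one must check carefully that the perturbation $Q$ can be chosen to be $C^{l+1}$ (or smooth) and globally well-defined while still spanning enough directions in the fibre at $z_0$, and that the pointwise surjectivity of the linear map $Q\mapsto(Q^J\circ d\os)^{0,1}\circ d\varphi\cdot\psi$ genuinely holds wherever $\psi\neq 0$ and $d\varphi\neq 0$ — this is where the choice of the connection (rather than merely $J$) as the perturbation parameter earns its keep.
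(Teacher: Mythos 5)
Your overall strategy is the paper's: identify the universal moduli space with $\mF^{-1}(0)$, prove surjectivity of the vertical differential by a closed-image-plus-density (Hahn--Banach) argument in which the $Q$-directions kill the annihilator component in $\hat{\mE}^{k-1,p}$ at points where $\psi\neq 0$ and $d\varphi\neq 0$, quote the classical argument (Prp.~3.2.1 of \cite{MS04}) for the $\mF_1$-component, and finish with the implicit function theorem; the paper merely organises this upper-triangularly (the $(\Xi,Y)$-directions surject onto the first factor, so one only needs surjectivity of $(\Psi,Q)\mapsto\mD^{A,J}_{\varphi}\Psi+((Q^J\circ d\os)^{0,1}\circ d\varphi)\psi$), which is equivalent to your ``kill $\zeta$ first, then $\eta$'' order. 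However, two steps are genuinely missing. First, the passage from the integral identity $\int_{\Sigma}\scal{\zeta}{((Q^J\circ d\os)^{0,1}\circ d\varphi)\psi}\,\dvol_{\Sigma}=0$ to ``$\zeta(z_0)=0$'' is not something you can read off ``pointwise near $z_0$'': the perturbation $Q$ is a variation of the connection $A$ on $GL(X)$, so it lives over $X$, not over $\Sigma$. Localising it means multiplying by a cutoff $\beta$ on $X$ supported near $\varphi(z_0)$, and the pulled-back perturbation is then supported in $\varphi^{-1}(\mathrm{supp}\,\beta)$; to ensure this is a small neighbourhood of $z_0$ you must choose $z_0$ an injective point, $\varphi^{-1}(\varphi(z_0))=\{z_0\}$. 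This -- not the existence of immersion points, which you attribute to simplicity but which holds for any nonconstant holomorphic curve -- is where simplicity of $\varphi$ actually enters. With a $Q$ achieving $\scal{\zeta(z_0)}{((Q^J\circ d\os)^{0,1}\circ d_{z_0}\varphi)\psi(z_0)}>0$, continuity of $\zeta$ (which you correctly obtain from Prp.~\ref{prpRiemannRoch}(ii) and the Sobolev embedding) and the cutoff make the integrand nonnegative and somewhere positive, contradicting annihilation; this bump argument needs to be spelled out, as in the paper's proof.

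Second, your duality argument as written only proves surjectivity for $k=1$: a continuous functional vanishing on a non-dense subspace of $W^{k-1,p}$ is represented by an $L^q$ section only when $k-1=0$. For $k\in\{2,\ldots,l\}$ the paper first settles $k=1$ and then, given $\eta\in W^{k-1,p}$, solves $\tD_{(\varphi,\psi,A,J)}\mF_2[\Psi+Q]=\eta$ with $\Psi\in W^{1,p}$ and upgrades $\Psi$ to $W^{k,p}$ by elliptic bootstrapping (Lem.~\ref{lemEllipticBootstrapping}), using $\mD^{A,J}_{\varphi}\Psi=\eta-((Q^J\circ d\os)^{0,1}\circ d\varphi)\psi\in W^{k-1,p}$; you need this regularity step to cover all $k$ in the statement. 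Two smaller points: the assertion that $\eta$ satisfies $D_{\varphi}^*\eta=0$ ``modulo the coupling term'' before $\zeta$ is killed is unnecessary (and its regularity is not immediate from Prp.~\ref{prpRiemannRoch}(ii) because of that coupling term) -- simply kill $\zeta$ first and then treat $\eta$ by the classical argument; and the complemented-kernel hypothesis of the implicit function theorem is cleanest via an explicit bounded right inverse $R+\hat R$ assembled from the two Fredholm-plus-parameter-surjective pieces, as the paper does, rather than the vague splitting you indicate.
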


\begin{proof}
We first show that $D_{(\varphi,\psi,A,J)}\mF$, as calculated in
Lem. \ref{lemUniversalVerticalDifferential}, is surjective whenever
$(\varphi,\psi,A,J)\in\hat{\mM}^*(\beta,\Sigma,L;\mA^{l+1},\mJ^{l+1})$.
It is well-known that the map $(\Xi+Y)\mapsto D_{(\varphi,\psi,A,J)}\mF_1[\Xi+Y]$,
i.e. the restriction of $D_{(\varphi,\psi,A,J)}\mF_1$ to the subspace
$W^{k,p}(\Sigma,\varphi^*TX)\oplus C^{l+1}(X,\mathrm{End}(TX,J,\omega))$,
is surjective (as shown in the proof of Prp. 3.2.1 in \cite{MS04}) and, therefore,
surjectivity of the entire differential follows from surjectivity of the restriction
\begin{align*}
&\tD_{(\varphi,\psi,A,J)}\mF_2:(\Psi+Q)\\
&\qquad\qquad\mapsto D_{(\varphi,\psi,A,J)}\mF_2[\Psi+Q]
=\mD_{\varphi}^{A,J}\Psi+\left((Q^J\circ d\os)^{0,1}\circ d\varphi\right)\psi
\end{align*}
We show that $\tD_{(\varphi,\psi,A,J)}\mF_2$ is surjective.
Firstly, $\Psi\mapsto\mD_{\varphi}^{A,J}\Psi$ is a Fredholm operator by Lem. \ref{lemComplexCauchyRiemann},
and as such has a closed image and a finite dimensional cokernel. Since
$\tD_{(\varphi,\psi,A,J)}\mF_2$ is the sum of $\mD_{\varphi}^{A,J}$ with another bounded linear operator,
it follows that its image is also closed. For surjectivity, it thus suffices to prove that the image is
dense whenever $(\varphi,\psi,A,J)\in\hat{\mM}^*(\beta,\Sigma,L;\mA^{l+1},\mJ^{l+1})$.

First, consider $k=1$. If the image is not dense then, by the theorem of Hahn-Banach, there exists a nonzero section
$\eta\in L^q(\Sigma,\Lambda^{0,1}\Sigma\otimes_{\bC}L\otimes_J\varphi^*TX)$ with $1/p+1/q=1$ such that
the integral
\begin{align*}
\int_{\Sigma}\scal{\eta}{\tD_{(\varphi,\psi,A,J)}\mF_2[Q+\Psi]}\dvol_{\Sigma}=0
\end{align*}
vanishes for all $Q\in\Omega^1_{C^{l+1}}(\Sigma,GL(X)\times_{Ad}\fg)$ and
$\Psi\in W^{1,p}(\Sigma,L\otimes_J\varphi^*TX)$. In particular, this implies
\begin{align*}
\int_{\Sigma}\scal{\eta}{\mD_{\varphi}^{A,J}\Psi}\dvol_{\Sigma}=0
\end{align*}
and thus, by the Riemann-Roch theorem stated as Prp. \ref{prpRiemannRoch} and applied to $\mD_{\varphi}^{A,J}$,
$\eta\in W^{1,p}$ is of the same regularity class as $\Psi$ and, by the Sobolev embedding (\ref{eqnMorrey}),
$\eta$ is moreover continuous.
Let $z_0\in\Sigma$ be such that
\begin{align*}
d_{z_0}\varphi\neq 0\;,\qquad\varphi^{-1}(\varphi(z_0))=\{z_0\}\;,\qquad\psi(z_0)\neq 0
\end{align*}
Since, by assumption, $\varphi$ is simple and $\psi$ is not the zero section, the set of such
points is open and dense in $\Sigma$ (this follows from Prp. 2.5.1 in \cite{MS04}, and
from Lem. \ref{lemCROperatorZeroes}).
Assume, by contradiction, that $\eta(z_0)\neq 0$.
Let $V$, $U$, $\theta$ and $\os$ be, respectively, open sets and local sections as in Lem.
\ref{lemUniversalVerticalDifferential} and such that $z_0\in U$.
Then, on $U$, we can decompose $\psi=\theta\cdot\psi_{\theta}$ and $\eta=\theta\cdot\eta_{\theta}$
with $\psi_{\theta}\in\Gamma_{C^l}(U,\varphi^*TX)$ and
$\eta_{\theta}\in L^q(U,\Lambda^{0,1}\Sigma\otimes_J\varphi^*TX)$.
On $V$, we identify $J$ with a matrix-valued function in the trivialisation of $TX$ determined by $\os$.
For $g\in\fg:=gl(2n,\bR)$, we denote by
\begin{align*}
g^J:=\frac{1}{2}(g-J\cdot g\cdot J)\in C^{l+1}(V,\fg)
\end{align*}
the complex linearisation with respect to $J$. There is a canonical
identification with the element $g^J\cong\frac{1}{2}(g-J_0\cdot g\cdot J_0)\in\fg_{\bC}:=gl(n,\bC)$,
where $J_0$ denotes the standard complex structure on $\bR^{2n}$.
We let $\lambda\in\Lambda^{0,1}_{z_0}\Sigma$ and $\xi\in T_{\varphi(z_0)}X$ be such that
\begin{align*}
\eta_{\theta}(z_0)=\lambda\cdot\xi\in\Lambda^{0,1}_{z_0}\Sigma\otimes_{J(\varphi(z_0))}T_{\varphi(z_0)}X
\end{align*}
Now, since by assumption $0\neq\psi_{\theta}(z_0)\in T_{\varphi(z_0)}X$ and $d_{z_0}\varphi\neq 0$,
and since $\fg_{\bC}$ acts transitively on $\bR^{2n}$, there are $g\in\fg$ and $q\in T^*_{\varphi(z_0)}X$ such that
\begin{align*}
g^{J(\varphi(z_0))}\cdot\psi_{\theta}(z_0)=\xi\;,\qquad
(q\circ d_{z_0}\varphi)^{0,1}=\lambda
\end{align*}
We define
\begin{align}
\label{eqnQ0}
Q_0:=q^{0,1}\cdot g^{J(\varphi(z_0))}\in\Lambda^{0,1}_{\varphi(z_0)}X\otimes_{J(\varphi(z_0))}\fg_{\bC}
\end{align}
and obtain, using that $\varphi$ satisfies $J\circ d\varphi=d\varphi\circ j$,
\begin{align*}
(Q_0\circ d_{z_0}\varphi)\cdot\psi(z_0)
&=(q^{0,1}\circ d_{z_0}\varphi)\cdot\left(g^{J(\varphi(z_0))}\cdot\psi_{\theta}(z_0)\right)\cdot\theta(z_0)\\
&=(q\circ d_{z_0}\varphi)^{0,1}\cdot\xi\cdot\theta(z_0)\\
&=\lambda\cdot\xi\cdot\theta(z_0)=\eta(z_0)
\end{align*}
and, in particular, $\scal{\eta(z_0)}{(Q_0\circ d_{z_0}\varphi)\cdot\psi(z_0)}>0$.
Now let $Q\in\mQ:=\Omega^1_{C^{l+1}}(\Sigma,GL(X)\times_{Ad}\fg)$.
Then $Q\circ d\os\in\Omega^1_{C^{l+1}}(V,\fg)$ and, for every $p\in V$ and every
$\omega\in\Lambda^1_pX\otimes_{\bR}\fg$, there is $Q\in\mQ$ such that
$(Q\circ d\os)_p=\omega$ (cf. \cite{Bau09}, Sec. 3.2).
A fortiori, for every $\omega\in\Lambda^1_pX\otimes_{\bR}\fg_{\bC}$, there is $Q\in\mQ$ such that
$(Q^J\circ d\os)_p=\omega$ upon identifying $(\fg_{\bC},J_0)$ with the image under the
projection $g\mapsto g^{J(p)}$ with complex structure $J(p)$ as above.
Projecting again, we find that, for every $\omega\in\Lambda^{0,1}_pX\otimes_{J(p)}\fg_{\bC}$,
there is $Q\in\mQ$ such that $(Q^J\circ d\os)^{0,1}_p=\omega$.
Now let $Q\in\mQ$ be such that $(Q^J\circ d\os)^{0,1}_{\varphi(z_0)}=Q_0$ with $Q_0$ as in (\ref{eqnQ0}). Then
\begin{align*}
(\tD_{(\varphi,\psi,A,J)}\mF_2[Q])_{z_0}
=(Q_0\circ d_{z_0}\varphi)\psi(z_0)=\eta(z_0)
\end{align*}
and, by continuity, the function $\scal{\eta}{\tD_{(\varphi,\psi,A,J)}\mF_2[Q]}$ on $\Sigma$ is
positive in a neighbourhood $U_0$ of $z_0$. Because $z_0$ is such that
$\varphi^{-1}(\varphi(z_0))=\{z_0\}$, this implies that $\varphi(z_0)\notin\varphi(\Sigma\setminus U_0)$.
Therefore, there is a neighbourhood $\varphi(z_0)\ni V_0\subseteq X$ such that
$\varphi(\Sigma\setminus U_0)\cap V_0=\emptyset$ and hence $\varphi^{-1}(V_0)\subseteq U_0$.
We choose a cutoff function $\beta:X\rightarrow[0,1]$ with support in $V_0$ and such that
$\beta(\varphi(z_0))=1$. Then the function
\begin{align*}
(\beta\circ\varphi)\cdot\scal{\eta}{\tD_{(\varphi,\psi,A,J)}\mF_2[Q]}=
\scal{\eta}{\tD_{(\varphi,\psi,A,J)}\mF_2[(\beta\circ\varphi)\cdot Q]}
\end{align*}
on $\Sigma$ is supported in $U_0$, is nonnegative and somewhere positive, whence
\begin{align*}
\int_{\Sigma}\scal{\eta}{\tD_{(\varphi,\psi,A,J)}\mF_2[(\beta\circ\varphi)\cdot Q]}\dvol_{\Sigma}\neq 0
\end{align*}
This contradiction shows that $\eta(z_0)=0$. As the set of $z_0$ for which this reasoning holds
is dense in $\Sigma$, $\eta$ vanishes almost everywhere and, by continuity, $\eta\equiv 0$
follows. This, in turn, is a contradiction to the image of $\tD_{(\varphi,\psi,A,J)}\mF_2$
not being dense and, therefore,
we have shown that the image is dense and (since it is also closed) surjective in the case $k=1$.

To prove surjectivity for general $k$, let
$\eta\in W^{k-1,p}(\Sigma,\Lambda^{0,1}\Sigma\otimes_{\bC}L\otimes_J\varphi^*TX)$. Now we use surjectivity
for $k=1$: In fact we have just proved that there exist $\Psi$ and $Q$ of regularity class $W^{1,p}$ and
$C^{l+1}$, respectively, such that $\tD_{(\varphi,\psi,A,J)}\mF_2[\Psi+Q]=\eta$. Therefore,
\begin{align*}
\mD^{A,J}_{\varphi}\Psi=\eta-\left((Q^J\circ d\os)^{0,1}\circ d\varphi\right)\psi\in W^{k-1,p}
\end{align*}
By the local form of the operator $\mD^{A,J}_{\varphi}$ (cf. Lem. \ref{lemLocalHolomorphicSupercurve}),
we may thus apply the elliptic bootstrapping method from Lem. \ref{lemEllipticBootstrapping} to conclude
that $\Psi\in W^{k,p}$ holds. This concludes the proof that $\tD_{(\varphi,\psi,A,J)}\mF_2$ and,
therefore, also $D_{(\varphi,\psi,A,J)}\mF$ is surjective for general $k\leq l$ whenever
$(\varphi,\psi,A,J)\in\hat{\mM}^*(\beta,\Sigma,L;\mA^{l+1},\mJ^{l+1})$.

To finish the proof, note that surjectivity of the map
\begin{align*}
(\Xi+Y)\mapsto D_{(\varphi,\psi,A,J)}\mF_1[\Xi+Y]=D_{\varphi}\Xi+\frac{1}{2}Y(\varphi)\circ d\varphi\circ j
\end{align*}
implies that it has a right inverse $R$ since $D_{\varphi}$ is a Fredholm operator. By the same reasoning,
$\tD_{(\varphi,\psi,A,J)}\mF_2$ has a right inverse $\hat{R}$,
and it follows that $R+\hat{R}$ is a right inverse of $D_{(\varphi,\psi,A,J)}\mF$.
Hence it follows from the infinite dimensional implicit function theorem
that $\hat{\mM}^*(A,\Sigma,L;\mA^{l+1},\mJ^{l+1})$ is a $C^{l-k}$-Banach
submanifold of $\mG^{k,p}$, which is separable since $\mG^{k,p}$ is.
\end{proof}

\subsection{Proof of Transversality}
\label{subsecProofsTransversality}

Having established the necessary background by now, we come to
the proof of Thm. \ref{thmSuperregular}. We need two auxiliary lemmas.

\begin{Lem}
\label{lemGEBanachBundle}
Let $1\leq k\leq l$ be natural numbers, $p>2$, $\beta\in H_2(X,\bZ)$ and $(A,J)\in\mA^{l+1}\times\mJ^{l+1}$. Then, prescribing
\begin{align*}
\fG^{k,p}_{\varphi}&:=W^{k,p}(\Sigma,L\otimes_J\varphi^*TX)\\
\fE^{k-1,p}_{\varphi}&:=W^{k-1,p}(\Sigma,\Lambda^{0,1}\Sigma\otimes_J\varphi^*TX)\\
\hat{\fE}^{k-1,p}_{\varphi}&:=W^{k-1,p}(\Sigma,\Lambda^{0,1}\Sigma\otimes_{\bC}L\otimes_J\varphi^*TX)
\end{align*}
for $\varphi\in\mB^{k,p}_{\beta}$ defines separable $C^{l-k}$-Banach space bundles
\begin{align*}
\fG^{k,p},\fE^{k-1,p},\hat{\fE}^{k-1,p}\rightarrow\mB^{k,p}_{\beta}
\end{align*}
\end{Lem}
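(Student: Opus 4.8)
The plan is to recognise these three bundles as restrictions of the bundles $\mG^{k,p}$, $\mE^{k-1,p}$ and $\hat{\mE}^{k-1,p}$ constructed in Lem. \ref{lemGBanachBundle} and Lem. \ref{lemEBanachBundle} to the slice $\mB^{k,p}_{\beta}\times\{A\}\times\{J\}$. Equivalently, $\fG^{k,p}$, $\fE^{k-1,p}$ and $\hat{\fE}^{k-1,p}$ are the pullbacks of the former under the smooth inclusion $\iota:\mB^{k,p}_{\beta}\rightarrow\mB^{k,p}_{\beta}\times\mA^{l+1}\times\mJ^{l+1}$, $\varphi\mapsto(\varphi,A,J)$. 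Since the pullback of a separable $C^{l-k}$-Banach space bundle along a smooth map between separable Banach manifolds is again a separable $C^{l-k}$-Banach space bundle, and $\mB^{k,p}_{\beta}$ is such a manifold by \cite{Eli67}, the claim follows at once.

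Alternatively, and more directly, one can simply repeat the constructions of Lem. \ref{lemGBanachBundle} and Lem. \ref{lemEBanachBundle} with the parameters $A$ and $J$ held fixed. First I would identify a neighbourhood $U(\varphi)$ of $\varphi\in\mB^{k,p}_{\beta}$ with a neighbourhood of zero in $W^{k,p}(\Sigma,\varphi^*TX)$ via $\xi\mapsto\exp_{\varphi}(\xi)$, and trivialise $\fG^{k,p}$ over $U(\varphi)$ by $\nabla^{A,J}$-parallel transport $\mP^{A,J}_{\varphi}(\xi):L\otimes_J\varphi^*TX\rightarrow L\otimes_J\exp_{\varphi}(\xi)^*TX$; since $A$ and $J$ no longer vary, only the first of the three stages of the trivialisation in Lem. \ref{lemGBanachBundle} is needed. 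The transition maps $\xi\mapsto\mP^{A,J}_{\varphi}(\xi)\circ\xi$ are of class $C^{l-k}$ by the same estimate as there: parallel transport for a connection of class $C^{l+1}$ is of class $C^{l}$, and differentiating it $l-k$ times still carries a $W^{k,p}$-section to a well-defined $W^{k,p}$-section because $p>2$. The bundles $\fE^{k-1,p}$ and $\hat{\fE}^{k-1,p}$ are handled identically, using the trivialising isomorphisms of Lem. \ref{lemEBanachBundle}.

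For separability, I would invoke once more that $\mB^{k,p}_{\beta}$ is separable, so it admits a countable atlas $\{U(\varphi_{\alpha})\}_{\alpha}$ over each chart of which $\fG^{k,p}$ (resp. $\fE^{k-1,p}$, $\hat{\fE}^{k-1,p}$) is trivial with model fibre $W^{k,p}(\Sigma,L\otimes_J\varphi_{\alpha}^*TX)$ (resp. the associated spaces of $(0,1)$-forms); as these model fibres are separable, each $\fG^{k,p}|_{U(\varphi_{\alpha})}$ is separable and hence so is the total space, being a countable union of separable sets. I do not expect any real obstacle here: the statement is a verbatim specialisation of Lem. \ref{lemGBanachBundle} and Lem. \ref{lemEBanachBundle} with $A$ and $J$ frozen, and the only point that needs (the same, previously made) care is the regularity bookkeeping producing class $C^{l-k}$, which comes from differentiating the parallel-transport isomorphisms $l-k$ times against $W^{k,p}$-sections.
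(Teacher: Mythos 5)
Your proposal is correct and its second (direct) argument is exactly the paper's proof: the paper simply notes the construction is completely analogous to Lem.~\ref{lemGBanachBundle} and Lem.~\ref{lemEBanachBundle}, with only the parallel-transport stage of the trivialisation needed since $A$ and $J$ are fixed, and the separability argument is the same. The alternative pullback-along-the-slice formulation you mention is a harmless repackaging of the same idea.
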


\begin{proof}
This is shown completely analogous to Lem. \ref{lemGBanachBundle} and Lem \ref{lemEBanachBundle}.
Here, we only need trivialisation via parallel transport.
\end{proof}

Similar to the discussion preceeding (\ref{eqnUniversalModuliSpace}), elliptic regularity implies that
we may identify the moduli space of regularity class $C^l$ with a subspace of $\fG^{k,p}$ as follows.
For $\psi\in\fG^{k,p}$, we abbreviate $\varphi:=\pi_{\mB}(\psi)$. Then
\begin{align*}
\hat{\mM}^*(\beta,\Sigma,L;A,J)
=\{\psi\in\fG^{k,p}\setminus\{0\}\setsep\dbar_J\varphi=0\,,\;\mD_{\varphi}^{A,J}\psi=0\}
\end{align*}
Moreover, defining a map $\fF:\fG^{k,p}\setminus\{0\}\rightarrow\fE^{k-1,p}\oplus\hat{\fE}^{k-1,p}$ by
\begin{align}
\label{eqnFF}
\fF(\psi):=\fF_1(\psi)+\fF_2(\psi):=\dbar_J\varphi+\mD_{\varphi}^{A,J}\psi
\end{align}
it is clear that
\begin{align}
\hat{\mM}^*(\beta,\Sigma,L;A,J)=\fF^{-1}(0)
\end{align}

\begin{Lem}
\label{lemVerticalDifferential}
Let $l\geq 2$, $p>2$ and $k\in\{1,\ldots,l\}$. Then $\fF$ is a function of regularity class $C^{l-k}$.
Moreover, its vertical differential is the map
\begin{align*}
&D_{(\varphi,\psi)}(\fF_1+\fF_2)=\hat{\mD}_{\varphi,\psi}^{A,J}:
W^{k,p}(\Sigma,\varphi^*TX)\oplus W^{k,p}(\Sigma,L\otimes_J\varphi^*TX)\\
&\qquad\rightarrow W^{k-1,p}(\Sigma,\Lambda^{0,1}\Sigma\otimes_J\varphi^*TX)\oplus
W^{k-1,p}(\Sigma,\Lambda^{0,1}\Sigma\otimes_{\bC}L\otimes_J\varphi^*TX)
\end{align*}
defined by (\ref{eqnVerticalDifferential}).
\end{Lem}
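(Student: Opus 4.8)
The plan is to reduce this to two essentially known computations, one for $\fF_1$ and one for $\fF_2$, and observe that $\fF$ is just the ``fixed data'' analogue of the map $\mF$ from Lem. \ref{lemUniversalVerticalDifferential}. First I would note that $\fF = \mF|_{\{A\}\times\{J\}}$ under the obvious inclusion $\fG^{k,p}\hookrightarrow\mG^{k,p}$ (and likewise for the target bundles $\fE^{k-1,p},\hat\fE^{k-1,p}\hookrightarrow\mE^{k-1,p},\hat\mE^{k-1,p}$), so the regularity claim $\fF\in C^{l-k}$ follows immediately from the corresponding claim in Lem. \ref{lemUniversalVerticalDifferential}; alternatively it is seen directly from the local formula (\ref{eqnLocalComplexConnection}), in which $\varphi$ enters with one derivative, so composing with the $C^{l-k}$ trivialisations $\mT$, $\tilde\mT$ of Lem. \ref{lemGEBanachBundle} gives a $C^{l-k}$ map.

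For the vertical differential, the point is that holding $(A,J)$ fixed amounts to setting $Q=0$ and $Y=0$ in the formulas of Lem. \ref{lemUniversalVerticalDifferential}. Thus $D_{(\varphi,\psi)}\fF_1[\Xi+\Psi]=D_\varphi\Xi$ and $D_{(\varphi,\psi)}\fF_2[\Xi+\Psi]=\mD_\varphi^{A,J}\Psi+\nabla^{A,J}_\Xi(\mD_\varphi^{A,J})\psi$, and adding these recovers exactly the right-hand side of (\ref{eqnVerticalDifferential}), i.e. $\hat\mD^{A,J}_{\varphi,\psi}$. The $\fF_1$ part is the classical linearisation of $\dbar_J$ and is treated as in Chp. 3 of \cite{MS04}; the $\fF_2$ part is computed precisely as in the proof of Lem. \ref{lemUniversalVerticalDifferential}: trivialise $\fG^{k,p}$ near $\varphi$ via $\nabla^{A,J}$-parallel transport $\mP^{A,J}_\varphi(\xi)$ and $\hat\fE^{k-1,p}$ analogously, write $\fF_2$ in these trivialisations as $(\tilde\mT)^{-1}\circ\mD^{A,J}_{\exp_\varphi\xi}\circ\mT\circ\psi'$, differentiate at $\xi=0$, $\psi'=\psi$, and use that differentiating parallel transport produces the covariant derivative, giving the term $\nabla^{A,J}_\Xi(\mD^{A,J}_\varphi)\psi$, while the $\psi'$-derivative is simply $\mD^{A,J}_\varphi\Psi$ by linearity of the operator in $\psi$. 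Independence of the choices of $\os$ and $\theta$ is inherited from Lem. \ref{lemUniversalVerticalDifferential}.

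The only genuinely new observation needed is that the restricted map lands in the correct subbundles: since $A$ and $J$ are not being varied, one uses only the parallel-transport trivialisation (as recorded in Lem. \ref{lemGEBanachBundle}), and the mapping properties are exactly the first stage of the trivialisation in Lem. \ref{lemGBanachBundle}. I do not expect a real obstacle here; the statement is a corollary of the harder Lem. \ref{lemUniversalVerticalDifferential}, and the main ``work'' is simply the bookkeeping of identifying $\fF$ with the partial map obtained from $\mF$ by freezing the parameters $(A,J)$, together with checking that the two formulas displayed in (\ref{eqnVerticalDifferential}) combine to $\hat\mD^{A,J}_{\varphi,\psi}$ as claimed. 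Should one prefer a self-contained argument, the differentiation of parallel transport is the one non-trivial ingredient, but it is entirely standard and identical to the computation already carried out for $\mF_2$.
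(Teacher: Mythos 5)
Your proposal is correct and follows the paper's own route: the paper likewise proves this lemma simply by setting $Q=0$ and $Y=0$ in Lem. \ref{lemUniversalVerticalDifferential}, so that the regularity and the formula (\ref{eqnVerticalDifferential}) are inherited directly. The extra detail you give (repeating the parallel-transport computation and checking the restriction to the subbundles of Lem. \ref{lemGEBanachBundle}) is sound but not needed beyond this reduction.
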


\begin{proof}
This follows directly from Lem. \ref{lemUniversalVerticalDifferential} upon setting $Q=0$ and $Y=0$.
\end{proof}

\begin{proof}[Proof of Thm. \ref{thmSuperregular}(i)]
Let $(A,J)\in\mA\mJ_{\mathrm{reg}}(\beta,\Sigma,L)$ be regular as assumed in the hypotheses, and fix
$(\varphi,\psi)\in\hat{\mM}^*(\beta,\Sigma,L;A,J)$.
Let $k\geq 1$ be an integer and $p>2$.
By Lem. \ref{lemVerticalDifferential}, $\fF$ is locally a smooth map between Banach spaces.
More precisely, restricted to a sufficiently small neighbourhood $U(\varphi)\subseteq\mB_{\beta}^{k,p}$
of $\varphi$, the Banach space bundles involved are trivial (by Lem. \ref{lemGEBanachBundle}),
and $\fF$ can be considered as a map
\begin{align*}
&\fF:U(\varphi)\times W^{k,p}(\Sigma,L\otimes_J\varphi^*TX)\setminus\{0\}\\
&\qquad\rightarrow W^{k-1,p}(\Sigma,\Lambda^{0,1}\Sigma\otimes_J\varphi^*TX)\oplus
W^{k-1,p}(\Sigma,\Lambda^{0,1}\Sigma\otimes_{\bC}L\otimes_J\varphi^*TX)
\end{align*}
where $U(\varphi)$ is identified with a neighbourhood of zero in $W^{k,p}(\Sigma,\varphi^*TX)$
like in the proof of Lem. \ref{lemGBanachBundle}.
By Lem. \ref{lemVerticalDifferential} and Lem. \ref{lemCROperatorFredholmSum}, we know that
the differential $D_{(\varphi,\psi)}\fF$ is Fredholm with index $\mathrm{ind}D_{\varphi}+\mathrm{ind}\mD_{\varphi}^{A,J}$.
Since $(A,J)$ is regular, it follows from the discussion below Prp. \ref{prpRiemannRoch}
that $D_{(\varphi,\psi)}\fF$ is, moreover, surjective and, being Fredholm, also has a right inverse.
The hypotheses of the infinite dimensional implicit function theorem
are thus satisfied, and it follows that $\fF^{-1}(0)$ intersects the local domain stated
in a smooth submanifold of dimension $\mathrm{ind}D_{\varphi}+\mathrm{ind}\mD_{\varphi}^{A,J}$.
Hence $\hat{\mM}^*(\beta,\Sigma,L;A,J)$ is a smooth submanifold of $\fG^{k,p}$.
According to the discussion following Lem. \ref{lemGEBanachBundle},
the coordinate charts of $\hat{\mM}^*(\beta,\Sigma,L;A,J)$ obtained in this way
are (by elliptic regularity) independent of the choices of $k$ and $p$.

It remains to show that the moduli spaces have a natural orientation.
This is a standard argument (carried out e.g. in the proof of Prp. 2.8 in \cite{Sol06}).
As shown in App. A.2 of \cite{MS04},
prescribing $\det D:=\bigwedge(\ker D)\otimes\bigwedge(\ker D^*)$ for a
Fredholm operator $D:X\rightarrow Y$ between two Banach spaces $X$ and $Y$
yields a line bundle over the space of Fredholm operators.
If $D$ is complex linear, $\ker D$ and $\ker D^*$ carry an almost complex structure,
thus inducing an orientation on $\det D$. The last conclusion remains true if $D$
is a real-linear Cauchy-Riemann operator: In this case, trivialising the determinant
line bundle over any homotopy from $D$ to its complex-linear part yields an orientation
on $\det D$.
Applied to our situation, $\hat{\mD}^{A,J}_{\varphi,\psi}$ is a surjective real
linear Cauchy-Riemann operator and, therefore, we obtain a canonical orientation
on $\det\hat{\mD}^{A,J}_{\varphi,\psi}=\bigwedge(\ker\hat{\mD}^{A,J}_{\varphi,\psi})$.
Since, again by the implicit function theorem, its kernel coincides with the tangent space
$T_{(\varphi,\psi)}\hat{\mM}^*(\beta,\Sigma,L;A,J)$, and this yields an orientation of
the moduli space $\hat{\mM}^*(\beta,\Sigma,L;A,J)$.
\end{proof}

\begin{proof}[Proof of Thm. \ref{thmSuperregular}(ii)]
Let $p>2$. We show that the set
\begin{align*}
\mA\mJ^l_{\mathrm{reg}}&:=\mA\mJ^l_{\mathrm{reg}}(\beta,\Sigma,L)\\
&:=\{(A,J)\in\mA^l\times\mJ^l\setsep\hat{\mD}_{\varphi,\psi}^{A,J}\;\textrm{surj. for all}\;
(\varphi,\psi)\in\hat{\mM}^*(\beta,\Sigma,L;A,J)\}
\end{align*}
is dense in $\mA^l\times\mJ^l$ for $l$ sufficiently large and then use a Taubes argument to prove the theorem.
By Prp. \ref{prpUniversalModuliSpace}, applied with $k=1$, the projection
\begin{align*}
\pi:\hat{\mM}^*(\beta,\Sigma,L;\mA^{l+1},\mJ^{l+1})\rightarrow\mA^{l+1}\times\mJ^{l+1}
\end{align*}
is $C^{l-1}$-map between separable $C^{l-1}$-Banach manifolds. From (\ref{eqnUniversalModuliSpace}),
it is clear that the tangent space of the domain equals
\begin{align*}
&T_{(\varphi,\psi,A,J)}\hat{\mM}^*(\beta,\Sigma,L;\mA^{l+1},\mJ^{l+1})\\
&\qquad\qquad=\{(\Xi+\Psi+Q+Y)\setsep D_{(\varphi,\psi,A,J)}\mF[\Xi,\Psi,Q,Y]=0\}
\end{align*}
where the tuple $(\Xi,\Psi,Q,Y)$ is as in Lem. \ref{lemUniversalVerticalDifferential} and,
by that lemma, the tangent condition can be rewritten as follows.
\begin{align*}
0=&\hat{\mD}_{\varphi,\psi}^{A,J}[\Xi+\Psi]
+\frac{1}{2}Y(\varphi)\circ d\varphi\circ j
+\left((Q^J\circ d\os)^{0,1}\circ d\varphi\right)\psi\\
&+\frac{1}{2}JY\circ\mD_{\varphi}^{A,J}\psi
-\frac{1}{2}\mD_{\varphi}^{A,J}\circ JY\psi+D_Y(\varphi,\psi,A,J)[Y]
\end{align*}
Moreover, the derivative of $\pi$ is the projection
\begin{align*}
d_{(\varphi,\psi,A,J)}\pi:T_{(\varphi,\psi,A,J)}\hat{\mM}^*(\beta,\Sigma,L;\mA^{l+1},\mJ^{l+1})
&\rightarrow T_A\mA^{l+1}\oplus T_J\mJ^{l+1}\;,\\
(\Xi+Q+Y+\Psi)&\mapsto Q+Y
\end{align*}
whence the kernel of $d_{(\varphi,\psi,A,J)}\pi$ is isomorphic to the kernel of
$\hat{\mD}_{\varphi,\psi}^{A,J}$. It follows that the cokernels are also isomorphic
(cf. \cite{MS04}, Lem. A.3.6). Therefore, $d_{(\varphi,\psi,A,J)}\pi$ is Fredholm
with the same index as $\hat{\mD}_{\varphi,\psi}^{A,J}$ and, moreover,
$d_{(\varphi,\psi,A,J)}\pi$ is surjective if and only if $\hat{\mD}_{\varphi,\psi}^{A,J}$ is.
A regular value $(A,J)$ of $\pi$ is thus characterised by the property that
$\hat{\mD}_{\varphi,\psi}^{A,J}$ is surjective for every
$(\varphi,\psi)\in\hat{\mM}^*(\beta,\Sigma,L;A,J)=\pi^{-1}((A,J))$.
The set of regular values of $\pi$ is hence precisely $\mA\mJ^{l+1}_{\mathrm{reg}}$.
By the theorem of Sard-Smale (cf. \cite{Sma65}),
this set is of the second category in $\mA^{l+1}\times\mJ^{l+1}$, provided that
\begin{align*}
l-2\geq\mathrm{ind}\pi=\mathrm{ind}\hat{\mD}_{\varphi,\psi}^{A,J}=n(4-4g)+4c_1(\beta)+2n\deg L
\end{align*}
Therefore, $\mA\mJ_{\mathrm{reg}}^l$ is dense in $\mA^l\times\mJ^l$ with respect to the $C^l$-topology
for sufficiently large $l$.

We shall now use an argument due to Taubes, to show that $\mA\mJ_{\mathrm{reg}}$ is of the
second category in $\mA\times\mJ$ with respect to the $C^{\infty}$-topology,
which largely parallels the one in the proof of Thm. 3.1.5(ii) in \cite{MS04}. Let
\begin{align*}
\mA\mJ_{\mathrm{reg},K}\subseteq\mA\times\mJ
\end{align*}
be the set of all smooth $(A,J)\in\mA\times\mJ$ such that $\hat{\mD}_{\varphi,\psi}^{A,J}$ is surjective
for every $(\varphi,\psi)\in\hat{\mM}^*(\beta,\Sigma,L;A,J)$ for which there exists $z\in\Sigma$ such that
\begin{align}
\label{eqnTaubes}
\norm{d\varphi}_{L^{\infty}}\leq K\;,\qquad
\inf_{\zeta\neq z}\frac{d(\varphi(z),\varphi(\zeta))}{d(z,\zeta)}\geq\frac{1}{K}\;,\qquad
\frac{1}{K}\leq\norm{\psi}_{L^{\infty}}\leq K
\end{align}
Every $(\varphi,\psi)\in\hat{\mM}^*(\beta,\Sigma,L;A,J)$ satisfies these conditions for some value of
$K>0$ and some point $z\in\Sigma$. Hence
\begin{align*}
\mA\mJ_{\mathrm{reg}}=\bigcap_{K>0}\mA\mJ_{\mathrm{reg},K}
\end{align*}
We prove that each $\mA\mJ_{\mathrm{reg},K}$ is open in $\mA\times\mJ$ or, equivalently,
that its complement is closed. Hence assume that a sequence
$(A^{\nu},J^{\nu})\notin\mA\mJ_{\mathrm{reg},K}$ converges to $(A,J)\in\mA\times\mJ$ in the
$C^{\infty}$-topology. Then there exist, for every $\nu$, a point $z^{\nu}\in\Sigma$
and a pair $(\varphi^{\nu},\psi^{\nu})\in\hat{\mM}^*(\beta,\Sigma,L;A^{\nu},J^{\nu})$
that satisfies (\ref{eqnTaubes}) with $z$ replaced by $z^{\nu}$, such that
$\hat{\mD}_{\varphi^{\nu},\psi^{\nu}}^{A^{\nu},J^{\nu}}$ is not surjective. By the
compactness result from Prp. \ref{prpPhiPsiConverging}, it follows that
there exists a subsequence $(\varphi^{\nu_i},\psi^{\nu_i})$ which converges,
uniformly with all derivatives, to a smooth pair $(\varphi,\psi)\in\hat{\mM}^*(\beta,\Sigma,L;A,J)$.
Since $\Sigma$ is compact, we may choose the subsequence such that $z^{\nu_i}$ converges to $z\in\Sigma$. Then
$(\varphi,\psi)$ satisfies (\ref{eqnTaubes}) for this point $z$ and moreover,
since the operators $\hat{\mD}_{\varphi^{\nu},\psi^{\nu}}^{A^{\nu},J^{\nu}}$ are not surjective, it
follows that $\hat{\mD}_{\varphi,\psi}^{A,J}$ is not surjective either.
This shows that $(A,J)\notin\mA\mJ_{\mathrm{reg},K}$, and thus we have proved that
the complement of $\mA\mJ_{\mathrm{reg},K}$ is closed in the $C^{\infty}$-topology.
Let $\mA\mJ_{\mathrm{reg},K}^l$ denote the set of all $(A,J)\in\mA^l\times\mJ^l$
such that $\hat{\mD}_{\varphi,\psi}^{A,J}$ is surjective for every
$(\varphi,\psi)\in\hat{\mM}^*(\beta,\Sigma,L;A,J)$
of regularity class $C^{l-1}$ that satisfies (\ref{eqnTaubes}) for some $z\in\Sigma$.
By the same argument as just provided,
$\mA\mJ_{\mathrm{reg},K}^l$ is open in $\mA^l\times\mJ^l$ with respect to the $C^l$-topology. Moreover,
note that
\begin{align*}
\mA\mJ_{\mathrm{reg},K}=\mA\mJ_{\mathrm{reg},K}^l\cap\mA\times\mJ
\end{align*}
We show next that $\mA\mJ_{\mathrm{reg},K}$ is dense in $\mA\times\mJ$ with respect to the $C^{\infty}$-topology.
Let $(A,J)\in\mA\times\mJ$. Since $\mA\mJ_{\mathrm{reg}}^l$ is dense in $\mA^l\times\mJ^l$ for large $l$,
there exists a sequence $(A^l,J^l)\in\mA\mJ_{\mathrm{reg}}^l$, $l\geq l_0$, such that
\begin{align*}
\norm{(A,J)-(A^l,J^l)}_{C^l}\leq 2^{-l}
\end{align*}
Since $(A^l,J^l)\in\mA\mJ_{\mathrm{reg},K}^l$ and $\mA\mJ_{\mathrm{reg},K}^l$ is open in the $C^l$-topology
there exists an $\varepsilon^l>0$ such that, for every $(A',J')\in\mA^l\times\mJ^l$, the implication
\begin{align*}
\norm{(A',J')-(A^l,J^l)}_{C^l}<\varepsilon^l\quad\implies\quad
(A',J')\in\mA\mJ_{\mathrm{reg},K}^l
\end{align*}
is true. Choose $(A'^l,J'^l)$ to be any smooth element such that
\begin{align*}
\norm{(A'^l,J'^l)-(A^l,J^l)}_{C^l}\leq\min\{\varepsilon^l,2^{-l}\}
\end{align*}
Then
\begin{align*}
(A'^l,J'^l)\in\mA\mJ_{\mathrm{reg},K}^l\cap\mA\times\mJ=\mA\mJ_{\mathrm{reg},K}
\end{align*}
and the sequence $(A'^l,J'^l)$ converges to $(A,J)$ in the $C^{\infty}$-topology.
This shows that the set $\mA\mJ_{\mathrm{reg},K}$ is dense in $\mA\times\mJ$ as claimed.
Thus $\mA\mJ_{\mathrm{reg}}$ is the intersection of the countable number of open dense sets
$\mA\mJ_{\mathrm{reg},K}$, $K\in\bN$, and so is of the second category as required.
\end{proof}

\subsection{Dependence on $(A,J)$}

We close this article by establishing the following result, to be made precise in
Thm. \ref{thmSuperregularHomotopy} below:
For two regular pairs $(A_0,J_0),(A_1,J_1)\in\mA\mJ_{\mathrm{reg}}(\beta,\Sigma,L)$,
the manifolds $\hat{\mM}^*(\beta,\Sigma,L;A_0,J_0)$ and
$\hat{\mM}^*(\beta,\Sigma,L;A_1,J_1)$ are oriented cobordant
(by using this term, we do not mean to imply that the spaces in question are compact).
The proof will turn out to be similar to that of Thm. \ref{thmSuperregular}.

\begin{Def}
A \emph{(smooth) homotopy} of connections and almost complex structures
is a smooth map $H:[0,1]\rightarrow\mA\times\mJ\,,\;H(\lambda)=(A_{\lambda},J_{\lambda})$.
Given such a homotopy, we define
\begin{align*}
\hat{\mW}^*(\beta,\Sigma,L;H):=
\{(\lambda,\varphi,\psi)\setsep 0\leq\lambda\leq 1\,,\;
(\varphi,\psi)\in\hat{\mM}^*(\beta,\Sigma,L;A_{\lambda},J_{\lambda})\}
\end{align*}
\end{Def}

The definition of $\hat{\mW}^*$ with $\mA\times\mJ$ replaced by $\mA^{l+1}\times\mJ^{l+1}$
is analogous.
Given a homotopy $H$ and $\lambda\in[0,1]$, we denote by $\fF_{\lambda}$ the map (\ref{eqnFF})
with $(A,J)$ replaced by $(A_{\lambda},J_{\lambda})\in\mA^{l+1}\times\mJ^{l+1}$.
Moreover, introducing the map
\begin{align*}
\tilde{\fF}:[0,1]\times\fG^{k,p}\setminus\{0\}&\rightarrow\fE^{k-1,p}\oplus\hat{\fE}^{k-1,p}\\
\tilde{\fF}(\lambda,\psi)&:=\fF_{\lambda}(\psi)=({\fF}_1)_{\lambda}(\psi)+({\fF}_1)_{\lambda}(\psi)
\end{align*}
it is clear that
\begin{align*}
\hat{\mW}^*(\beta,\Sigma,L;H)=\tilde{\fF}^{-1}(0)
\end{align*}
holds. For the rest of this section, we shall blur the distinction between spaces of
smooth maps and those of lower regularity with the implicit understanding that, exactly
as in the above treatment, we work within the categories of Sobolev and $C^l$ maps
in the first place and only at the end of the discussion pass to the smooth category,
using elliptic regularity (Prp. \ref{prpEllipticRegularity}).

\begin{Def}
Let $(A_0,J_0),(A_1,J_1)\in\mA\mJ_{\mathrm{reg}}(\beta,\Sigma,L)$. A homotopy
$H$ from $H(0)=(A_0,J_0)$ to $H(1)=(A_1,J_1)$ is called \emph{regular} if the vertical differential
$D_{(\lambda,\varphi,\psi)}\tilde{\fF}$ is surjective for every
$(\lambda,\varphi,\psi)\in\hat{\mW}^*(\beta,\Sigma,L;H)$.
We let $\mA\mJ_{\mathrm{reg}}(\beta,\Sigma,L;(A_0,J_0),(A_1,J_1))$ denote the space of regular homotopies.
\end{Def}

\begin{Thm}
\label{thmSuperregularHomotopy}
Let $(A_0,J_0),(A_1,J_1)\in\mA\mJ_{\mathrm{reg}}(\beta,\Sigma,L)$ be regular.
\begin{enumerate}
\renewcommand{\labelenumi}{(\roman{enumi})}
\item If $H=\{(A_{\lambda},J_{\lambda})\}_{\lambda}\in\mA\mJ_{\mathrm{reg}}(\beta,\Sigma,L;(A_0,J_0),(A_1,J_1))$
is a regular homotopy, then the space
$\hat{\mW}^*(\beta,\Sigma,L;H)$ is a smooth oriented manifold with
boundary
\begin{align*}
\partial\hat{\mW}^*(\beta,\Sigma,L;H)
=\hat{\mM}^*(\beta,\Sigma,L;A_0,J_0)\cup\hat{\mM}^*(\beta,\Sigma,L;A_1,J_1)
\end{align*}
whose orientation agrees with the orientation of $\hat{\mM}^*(\beta,\Sigma,L;A_1,J_1)$
and is opposite to the orientation of $\hat{\mM}^*(\beta,\Sigma,L;A_0,J_0)$.

\item The set $\mA\mJ_{\mathrm{reg}}(\beta,\Sigma,L;(A_0,J_0),(A_1,J_1))$
is of the second category in the space of all homotopies $H$
from $(A_0,J_0)$ to $(A_1,J_1)$.
\end{enumerate}
\end{Thm}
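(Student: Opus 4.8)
The plan is to rerun the arguments of Prp. \ref{prpUniversalModuliSpace} and Thm. \ref{thmSuperregular}, now with the parameter $\lambda\in[0,1]$ adjoined and the homotopy $H$ playing the role formerly played by the pair $(A,J)$; the only genuinely new feature is the behaviour at the endpoints $\lambda\in\{0,1\}$, where $H$ is frozen. For part (i), recall $\hat{\mW}^*(\beta,\Sigma,L;H)=\tilde{\fF}^{-1}(0)$. The obvious analogue of Lem. \ref{lemVerticalDifferential} (with the $\lambda$-derivative computed as in the $Q$- and $Y$-terms of Lem. \ref{lemUniversalVerticalDifferential}) gives, at $(\lambda,\varphi,\psi)\in\hat{\mW}^*(\beta,\Sigma,L;H)$,
\begin{align*}
&D_{(\lambda,\varphi,\psi)}\tilde{\fF}[\delta\lambda+\Xi+\Psi]\\
&\qquad=\delta\lambda\cdot\partial_{\lambda}\big(\fF_{\lambda}(\psi)\big)+\hat{\mD}_{\varphi,\psi}^{A_{\lambda},J_{\lambda}}[\Xi+\Psi]
\end{align*}
Since $\hat{\mD}_{\varphi,\psi}^{A_{\lambda},J_{\lambda}}$ is Fredholm by Lem. \ref{lemCROperatorFredholmSum}, adjoining the one-dimensional $\delta\lambda$-direction keeps $D_{(\lambda,\varphi,\psi)}\tilde{\fF}$ Fredholm of index $n(4-4g)+4c_1(\beta)+2n\deg L+1$, and regularity of $H$ gives surjectivity. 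At $\lambda\in\{0,1\}$ the restriction of this differential to $\{0\}\oplus W^{k,p}(\Sigma,\varphi^*TX)\oplus W^{k,p}(\Sigma,L\otimes_J\varphi^*TX)$ equals $\hat{\mD}_{\varphi,\psi}^{A_0,J_0}$ resp. $\hat{\mD}_{\varphi,\psi}^{A_1,J_1}$, which is surjective since $(A_0,J_0),(A_1,J_1)\in\mA\mJ_{\mathrm{reg}}$; hence the implicit function theorem for manifolds with boundary yields that $\hat{\mW}^*(\beta,\Sigma,L;H)$ is a smooth manifold whose boundary is $\tilde{\fF}^{-1}(0)\cap(\{0,1\}\times\fG^{k,p})=\hat{\mM}^*(\beta,\Sigma,L;A_0,J_0)\cup\hat{\mM}^*(\beta,\Sigma,L;A_1,J_1)$. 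The orientation is obtained exactly as in the proof of Thm. \ref{thmSuperregular}(i) from the determinant line of the surjective real-linear operator $D_{(\lambda,\varphi,\psi)}\tilde{\fF}$, which is the tangent space; the exact sequence $0\to\ker\hat{\mD}_{\varphi,\psi}^{A_{\lambda},J_{\lambda}}\to\ker D_{(\lambda,\varphi,\psi)}\tilde{\fF}\to\bR\to 0$ (projection onto $\delta\lambda$) identifies it with the orientation on $\det\hat{\mD}_{\varphi,\psi}^{A_{\lambda},J_{\lambda}}$ up to the $\partial_{\lambda}$-factor, and the outward-normal-first convention (outward normal $-\partial_{\lambda}$ at $\lambda=0$ and $+\partial_{\lambda}$ at $\lambda=1$) produces the stated sign relation.

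For part (ii), I would first form the universal moduli space $\hat{\mW}^*(\beta,\Sigma,L;\mathfrak{H}^{l+1})$ over the Banach manifold $\mathfrak{H}^{l+1}$ of $C^{l+1}$-homotopies from $(A_0,J_0)$ to $(A_1,J_1)$, whose tangent space at $H$ consists of variations $\hat{H}$ with $\hat{H}(0)=\hat{H}(1)=0$. Its universal vertical differential is $D_{(\lambda,\varphi,\psi)}\tilde{\fF}$ plus the term $D_{(\varphi,\psi,A_{\lambda},J_{\lambda})}(\mF_1+\mF_2)[\hat{H}(\lambda)]$ of Lem. \ref{lemUniversalVerticalDifferential}, and one shows it is surjective at every point of the universal space: for $\lambda\in(0,1)$ this is the argument of Prp. \ref{prpUniversalModuliSpace} verbatim, choosing $\hat{H}$ supported near $\lambda$ in $[0,1]$ and cut off in $\Sigma$ as there (using that $\varphi$ is simple and $\psi\not\equiv 0$); for $\lambda\in\{0,1\}$ the homotopy is frozen, but then $\hat{\mD}_{\varphi,\psi}^{A_0,J_0}$ resp. $\hat{\mD}_{\varphi,\psi}^{A_1,J_1}$ is already surjective by the hypothesis on the endpoints. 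Thus the universal space is a separable $C^{l-k}$-Banach manifold with boundary $\hat{\mM}^*(\beta,\Sigma,L;A_0,J_0)\times\mathfrak{H}^{l+1}\sqcup\hat{\mM}^*(\beta,\Sigma,L;A_1,J_1)\times\mathfrak{H}^{l+1}$, and the projection $\pi$ to $\mathfrak{H}^{l+1}$ is Fredholm with $d\pi$ surjective at $(\lambda,\varphi,\psi,H)$ iff $D_{(\lambda,\varphi,\psi)}\tilde{\fF}$ is, while $\pi|_{\partial}$ is a submersion precisely because $(A_0,J_0),(A_1,J_1)$ are regular. By Sard-Smale, for $l$ sufficiently large (so that $l$ exceeds the Fredholm index of $\pi$), the set of regular values of $\pi$ — that is, the regular homotopies of class $C^l$ — is of the second category in $\mathfrak{H}^l$.

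To pass from the $C^l$- to the $C^{\infty}$-topology I would repeat the Taubes argument of the proof of Thm. \ref{thmSuperregular}(ii): write the set of smooth regular homotopies as $\bigcap_{K\in\bN}\mathfrak{H}_{\mathrm{reg},K}$, where $\mathfrak{H}_{\mathrm{reg},K}$ consists of the smooth homotopies $H$ for which $D_{(\lambda,\varphi,\psi)}\tilde{\fF}$ is surjective at every $(\lambda,\varphi,\psi)\in\hat{\mW}^*(\beta,\Sigma,L;H)$ satisfying (\ref{eqnTaubes}) for some $z\in\Sigma$; openness of $\mathfrak{H}_{\mathrm{reg},K}$ follows from Prp. \ref{prpPhiPsiConverging} applied along a subsequence with $\lambda^{\nu}\to\lambda$ (so that $(A_{\lambda^{\nu}},J_{\lambda^{\nu}})\to(A_{\lambda},J_{\lambda})$ in $C^{l+1}$), together with the fact that non-surjectivity is closed under the resulting $C^l$-convergence, and density follows from the $C^l$-density just established via the usual diagonal sequence; intersecting the countably many open dense sets gives (ii). The step I expect to be the main obstacle is not analytic but organisational: checking universal surjectivity uniformly at interior $\lambda$ while keeping straight that nothing may be perturbed at $\lambda\in\{0,1\}$ — so that the argument genuinely invokes $(A_0,J_0),(A_1,J_1)\in\mA\mJ_{\mathrm{reg}}$ — and, in part (i), making the two boundary orientation signs come out exactly as claimed.
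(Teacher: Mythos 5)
Your proposal is correct and follows essentially the same route as the paper: the vertical differential $D_{(\lambda,\varphi,\psi)}\tilde{\fF}$ is the operator $\hat{\mD}^{A_{\lambda},J_{\lambda}}_{\varphi,\psi}$ endowed with the real $\lambda$-direction (Fredholm of index one higher, surjective by regularity of $H$), the implicit function theorem gives the manifold-with-boundary structure and the determinant-line homotopy to the complex-linear part gives the orientation, while part (ii) is handled via the universal moduli space over endpoint-fixed homotopies, surjectivity as in Prp.~\ref{prpUniversalModuliSpace}, Sard--Smale, and the Taubes intersection argument. Your treatment is in fact somewhat more explicit than the paper's at the points it leaves to the reader (surjectivity at $\lambda\in\{0,1\}$ via regularity of the endpoint pairs, the boundary version of Sard--Smale, and the boundary sign comparison), and these additions are consistent with the intended argument.
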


\begin{proof}[Proof of Thm. \ref{thmSuperregularHomotopy}(i)]
In a neighbourhood of $(\lambda,\varphi,\psi)\in\hat{\mW}^*$, we identify $\tilde{\fF}$ with
a map between the trivialisations of the Banach space bundles involved,
given by Lem. \ref{lemGEBanachBundle}, and analogous for the map
$\mF$ defined in (\ref{eqnMF}). Then $\tilde{\fF}=\mF\circ(\id\times H)$ is the concatenation
of $\mF$ with the homotopy $H$, and the (vertical) differential is readily calculated
to be
\begin{align*}
&D_{(\lambda,\varphi,\psi)}\tilde{\fF}[\Xi+\Psi+\Lambda]\\
&\qquad=D_{(\varphi,\psi,A_{\lambda},J_{\lambda})}\mF[\Xi+\Psi+(\partial_{\lambda}A_{\lambda})[\Lambda]
+(\partial_{\lambda}J_{\lambda})[\Lambda]]\\
&\qquad=D_{(\varphi,\psi,A_{\lambda},J_{\lambda})}\mF[\Xi+\Psi]
+D_{(\varphi,\psi,A_{\lambda},J_{\lambda})}\mF[(\partial_{\lambda}A_{\lambda})[\Lambda]
+(\partial_{\lambda}J_{\lambda})[\Lambda]]\\
&\qquad=\hat{\mD}^{A_{\lambda},J_{\lambda}}_{\varphi,\psi}[\Xi+\Psi]
+D_{(\varphi,\psi,A_{\lambda},J_{\lambda})}\mF[(\partial_{\lambda}A_{\lambda})[\Lambda]
+(\partial_{\lambda}J_{\lambda})[\Lambda]]
\end{align*}
Here the domain of $\hat{\mD}^{A_{\lambda},J_{\lambda}}_{\varphi,\psi}$ is,
compared to the original definition, endowed
by a real line corresponding to the tangent vector $\Lambda\in T_{\lambda}[0,1]\cong\bR$.
It is known to be Fredholm, while
the second summand is readily seen to be $\bR$-linear with respect to $\Lambda$
by the explicit expression given by Lem. \ref{lemUniversalVerticalDifferential}
and thus corresponds to a compact operator. It follows that
$D_{(\lambda,\varphi,\psi)}\tilde{\fF}$ is Fredholm and, by the regularity assumption,
surjective. Analogous as in the proof of Thm. \ref{thmSuperregular}(i), we may
conclude that $\hat{\mW}^*$ is a finite dimensional smooth manifold with boundary.
It is clear by definition that this boundary agrees with
$\hat{\mM}^*(\beta,\Sigma,L;A_0,J_0)\cup\hat{\mM}^*(\beta,\Sigma,L;A_1,J_1)$ as claimed.

Moreover, homotoping $D_{(\lambda,\varphi,\psi)}\tilde{\fF}$ through Fredholm operators
to the complex linear part of the (endowed) operator $\hat{\mD}^{A_{\lambda},J_{\lambda}}_{\varphi,\psi}$
induces a natural orientation on $\hat{\mW}^*$. Comparing the restriction
of this orientation to the boundary with the orientations of the moduli spaces $\hat{\mM}^*$
with respect to $(A_0,J_0)$ and $(A_1,J_1)$ as obtained in the proof of Thm. \ref{thmSuperregular}(i)
then finishes the proof.
\end{proof}

\begin{proof}[Proof of Thm. \ref{thmSuperregularHomotopy}(ii)]
We denote the space of homotopies from $(A_0,J_0)$ to $(A_1,J_1)$ by $\mH$ and define the
universal moduli space by
\begin{align*}
\hat{\mW}^*(\beta,\Sigma,L;\mH):=\{(\lambda,\varphi,\psi,H)\setsep H\in\mH\,,\;
(\lambda,\varphi,\psi)\in\hat{\mW}^*(\beta,\Sigma,L;H)\}
\end{align*}
Analogous to the proof of Thm. \ref{thmSuperregular}(ii), we have to show that
it is a separable Banach manifold, which can be seen as follows. With respect to
trivialisations of suitable Banach space bundles, consider the map
$(\lambda,\varphi,\psi,H)\mapsto\mF\circ(\id\times H(\lambda))$, whose zero set is
(an open subset of) $\hat{\mW}^*(\beta,\Sigma,L;\mH)$. By construction, the (vertical)
differential of this map is the sum of the operator $\Lambda\mapsto D_{(\lambda,\varphi,\psi)}\tilde{\fF}[\Lambda]$
(with respect to $H$) and the operator $D_{(\varphi,\psi,A_{\lambda},J_{\lambda})}\mF$
from Lem. \ref{lemUniversalVerticalDifferential}, with $(Q,Y)$ replaced by the corresponding
homotopy. It is surjective by a proof almost verbatim to that in Prp. \ref{prpUniversalModuliSpace}
and, accordingly, it follows that the universal moduli space is a separable Banach manifold.

Continuing as in the proof of Thm. \ref{thmSuperregular}(ii), one can show that the projection
onto the space $\mH$ is of sufficient regularity, and the kernel and cokernel of its differential
at $(\lambda,\varphi,\psi,H)$ are isomorphic to those of $D_{(\lambda,\varphi,\psi)}\tilde{\fF}$.
It remains to apply the theorem of Sard-Smale, followed by an argument due to Taubes as in the
second half of the proof of Thm. \ref{thmSuperregular}(ii).
We leave the details to the reader.
\end{proof}

\addcontentsline{toc}{section}{References}

\bibliographystyle{plain}

\end{document}